\renewcommand{\uppercasenonmath}[1]{}
\numberwithin{equation}{section} \theoremstyle{plain}
\newtheorem*{thm*}{Main Theorem}
\newtheorem{thm}{Theorem}[section]
\newtheorem{cor}[thm]{Corollary}
\newtheorem*{cor*}{Corollary}
\newtheorem{lem}[thm]{Lemma}
\newtheorem*{lem*}{Lemma}
\newtheorem*{fact*}{Fact}
\newtheorem*{nota*}{Notation}
\newtheorem{prop}[thm]{Proposition}
\newtheorem*{prop*}{Proposition}
\newtheorem{rem}[thm]{Remark}
\newtheorem*{rem*}{Remark}
\newtheorem*{observation*}{Observation}
\newtheorem*{exa*}{Example}
\newtheorem*{df*}{Definition}
\newtheorem*{con*}{Construction}
\renewcommand{\geq}{\geqslant}
\renewcommand{\leq}{\leqslant}
\begin{document}
\begin{center}
{\large  \bf Intersection Theorem for DG-modules}

\vspace{0.5cm} Xiaoyan Yang\\
%\bigskip
School of Science, Zhejiang University of Science and Technology, Hangzhou 310023, China\\
E-mail: yangxy@zust.edu.cn
\end{center}

\bigskip
\centerline { \bf  Abstract}
\leftskip10truemm \rightskip10truemm \noindent
Let $A$ be a commutative noetherian local DG-ring with bounded cohomology. The Intersection Theorem for DG-modules is examined and some of its applications are provided. The first is to prove the DG-setting of the amplitude inequality, New Intersection Theorem and Krull's principle ideal theorem. The second is to solve completely the Minamoto's conjecture in [Israel J. Math. 242 (2021) 1--36]. The third is to show the DG-version of the Bass conjecture about Cohen-Macaulay
rings and the Vasconcelos conjecture about Gorenstein rings.
\leftskip10truemm \rightskip10truemm \noindent
\\[2mm]
{\it Keywords:} Intersection Theorem; cohomological dimension; depth\\
{\it 2020 Mathematics Subject Classification:} 13D22, 13C15, 16E45

\leftskip0truemm \rightskip0truemm
%\bigskip
\section{\bf Introduction}\label{pre}
The New Intersection Theorem (NIT) of Peskine and Szpiro \cite{PS}, Hochster \cite{H} and Roberts \cite{R} is a central result in the homological theory of commutative
noetherian rings, which has a variety of applications (see \cite{F,SY}), and was
extended and polished by commutative algebra practitioners (see \cite{AIN,DY1,FI,I,IMSW,PW}).

Let $A$ be a  commutative noetherian local ring and $M,N$ two finitely generated $A$-modules with $\mathrm{projdim}_AM<\infty$.
 The NIT yields an inequality, i.e., Intersection Theorem (IT)
 \begin{align}
\mathrm{dim}_AN\leq \mathrm{dim}_A(M\otimes_AN)+\mathrm{projdim}_AM
\label{exact03}
\tag{$\dag$}\end{align}
 By applying the inequality $(\dag)$ with $N=A$, one has
$\mathrm{dim}A\leq \mathrm{dim}_AM+\mathrm{projdim}_AM$.
By the Auslander-Buchsbaum Formula, the inequality is equivalent to
$\mathrm{cmd}A\leq \mathrm{cmd}_AM$,
where $\mathrm{cmd}_AM=\mathrm{dim}_AM-\mathrm{depth}_AM$ is the Cohen-Macaulay defect of $M$ which determines the failure of $M$ to be Cohen-Macaulay. Further,
Yassemi \cite{Ya} showed the inequalities
\begin{align}
\mathrm{dim}_AN\leq\mathrm{dim}_A\mathrm{RHom}_A(M,N)\leq\mathrm{dim}_A(M\otimes_AN)+\mathrm{projdim}_AM
\label{exact03}
\tag{$\ddag$}\end{align}
By \cite[Proposition 5.2.6]{CF},
$\mathrm{depth}_A\mathrm{RHom}_A(M,A)=\mathrm{depth}A$, it follows by $(\ddag)$ that
$\mathrm{cmd}A\leq\mathrm{cmd}_A\mathrm{RHom}_A(M,A)\leq\mathrm{cmd}_AM$. Also
$\mathrm{grade}_AM+\mathrm{dim}_AM\leq\mathrm{dim}A$, so
if $M$ is perfect then $\mathrm{cmd}A=\mathrm{cmd}_A\mathrm{RHom}_A(M,A)=\mathrm{cmd}_AM$.

On the other hand, Foxby \cite{F} generalized the NIT for complexes, for
bounded complexes $X,Y$ with finite cohomologies, showed the IT of complexes \begin{center}
$\mathrm{dim}_AY\leq\mathrm{dim}_A(X\otimes^\mathrm{L}_AY)+\mathrm{projdim}_AX$\end{center}whenever $\mathrm{projdim}_AX<\infty$. In 2005, Dibaei and Yassemi \cite{DY1} continued to study the IT of complexes and proved the following inequalities\begin{center}
$\mathrm{dim}_AY-\mathrm{sup}X\leq\mathrm{dim}_A\mathrm{RHom}_A(X,Y)
\leq\mathrm{dim}_A(X\otimes^\mathrm{L}_AY)+\mathrm{projdim}_AX-\mathrm{inf}X$,
\end{center}
which generalizes the above inequalities $(\ddag)$.

 An \emph{associative DG-ring} $A$ is a $\mathbb{Z}$-graded ring
$A=\bigoplus_{i\in\mathbb{Z}}A^i$ with a diferential
 of degree 1, that satisfes the graded Leibniz rule.
 A DG-ring $A$ is
called \emph{commutative} if $b\cdot a=(-1)^{i\cdot j}a\cdot b$ for all $a\in A^i$ and $b\in A^j$, and $a^2=0$
if $i$ is odd.
A DG-ring $A$ is called \emph{non-positive} if $A^i=0$ for all $i>0$.
 A non-positive DG-ring $A$ is called \emph{noetherian} if the
ring $\mathrm{H}^0(A)$ is noetherian and the $\mathrm{H}^0(A)$-module $\mathrm{H}^i(A)$ is
finitely generated for all $i<0$.
If $A$ is a noetherian DG-ring and $(\mathrm{H}^0(A),\bar{\mathfrak{m}},\bar{\kappa})$ is a local ring, then we say that $(A,\bar{\mathfrak{m}},\bar{\kappa})$ is \emph{local noetherian}. \textbf{Unless stated to the contrary we assume throughout this paper that $A$ is a commutative and noetherian DG-ring.}

The derived category of DG-modules over $A$ will be denoted by $\mathrm{D}(A)$, and write $-\otimes_A^\mathrm{L}-$
 for its monoidal product, $\mathrm{RHom}_A(-,-)$ for the internal hom. The full subcategory which consists of DG-modules
whose cohomology is above (resp. below, bounded) is denoted by $\mathrm{D}^-(A)$ (resp.
$\mathrm{D}^+(A)$, $\mathrm{D}^\mathrm{b}(A)$), denote by $\mathrm{D}_\mathrm{f}(A)$ the full subcategory of $\mathrm{D}(A)$
consisting of DG-modules with finite cohomology, and set
$\mathrm{D}^-_\mathrm{f}(A)=\mathrm{D}^-(A)\cap \mathrm{D}_\mathrm{f}(A)$, $\mathrm{D}^+_\mathrm{f}(A)=\mathrm{D}^+(A)\cap \mathrm{D}_\mathrm{f}(A)$, $\mathrm{D}^\mathrm{b}_\mathrm{f}(A)=\mathrm{D}^\mathrm{b}(A)\cap \mathrm{D}_\mathrm{f}(A)$.
  For a DG-module $X$, we set $\mathrm{inf}X:=\mathrm{inf}\{n\in\mathbb{Z}\hspace{0.03cm}|\hspace{0.03cm}\mathrm{H}^n(X)\neq0\}$,
$\mathrm{sup}X:=\mathrm{sup}\{n\in\mathbb{Z}\hspace{0.03cm}|\hspace{0.03cm}\mathrm{H}^n(X)\neq0\}$
and $\mathrm{amp}X:=\mathrm{sup}X-\mathrm{inf}X$.

DG-rings allow
us to use techniques of homological algebra of ordinary rings in a much wider context.
Let $(A,\bar{\mathfrak{m}},\bar{\kappa})$ be a local DG-ring. Shaul \cite{Shaul} extended the definitions of Krull dimensions and depth of complexes to DG-setting. The \emph{local cohomology Krull dimension}
of a DG-module $X\in\mathrm{D}^-(A)$ is the number
\begin{center}$\mathrm{lc.dim}_AX:=
\mathrm{sup}\{\mathrm{dim}_{\mathrm{H}^0(A)}\mathrm{H}^\ell(X)+\ell\hspace{0.03cm}|\hspace{0.03cm}\ell\in\mathbb{Z}\}$,\end{center}where $\mathrm{dim}_{\mathrm{H}^0(A)}\mathrm{H}^\ell(X)$ is the usual Krull dimension of the $\mathrm{H}^0(A)$-module $\mathrm{H}^\ell(X)$.
 The \emph{depth} of a DG-module $Y\in\mathrm{D}^+(A)$ is the number
\begin{center}$\mathrm{depth}_AY:=\mathrm{inf}\mathrm{RHom}_A(\bar{\kappa},Y)$.\end{center}
The theory of classical Cohen-Macaulay rings, Cohen-Macaulay modules and Gorenstein rings has also been extended in \cite{Shaul, sh20}. This paper focuses on local cohomological Krull dimension of the DG-modules $\mathrm{RHom}_A(X,Y)$ and $X\otimes_A^\mathrm{L}Y$ for appropriate $X$ and $Y$ in $\mathrm{D}(A)$. We wish to extend the above inequalities to local DG-rings and obtains some characterizations about local Cohen-Macaulay and local Gorenstein DG-rings.

The paper is organized as follows.

In Section 2, we gather
some preliminaries and results about local cohomological Krull dimension of DG-modules, and generalize the DG-version of Grothendieck's
local duality theorem in \cite[Theorem 7.26]{s18} to a more general case. In Section 3,
 the DG-setting of IT is proved when $A$ is a local DG-ring with bounded cohomology (see Theorem \ref{lem5.4}), a bit more generally, for $0\not\simeq Y\in\mathrm{D}^{-}_{\mathrm{f}}(A)$ and $X\in\mathrm{D}^-(A)$ with $\mathrm{flatdim}_AX<\infty$ and
$\mathrm{depth}_AX<\infty$, we show the following inequality:
\begin{align}
\mathrm{lc.dim}_AY\leq\mathrm{lc.dim}_A(X\otimes^\mathrm{L}_AY)+\mathrm{flatdim}_AX
\label{exact03}
\tag{$\dag'$}\end{align}
Moreover, some upper and lower bounds of
$\mathrm{lc.dim}_{A}(X\otimes_A^\mathrm{L}Y)$ and $\mathrm{lc.dim}_{A}\mathrm{RHom}_A(X,Y)$ for some DG-modules $X$ and $Y$ are  provided (see Theorem \ref{lem5.4'}). When the DG-ring $A$ is sequence-regular and  $X,Y\in\mathrm{D}^{\mathrm{b}}_\mathrm{f}(A)$, we improve the inequality $(\dag')$ in Proposition \ref{lem5.7}, prove the following stronger inequalities:
\begin{center}
$\mathrm{lc.dim}_AX+\mathrm{lc.dim}_AY\leq\mathrm{lc.dim}A+\mathrm{lc.dim}_A(X\otimes^\mathrm{L}_AY)$.
\end{center}

In view of the inequalities $(\dag')$, we extend in Section 4 some classical results about amplitude inequality, Cohen-Macaulay rings and Gorenstein rings to local DG-rings with bounded cohomology. For $0\not\simeq X,Y\in\mathrm{D}^{\mathrm{b}}_{\mathrm{f}}(A)$ with $\mathrm{projdim}_AX<\infty$,  J${\o}$rgensen \cite[Theorem 3.1]{J} showed the amplitude inequality $\mathrm{amp}(X\otimes^\mathrm{L}_AY)\geq\mathrm{amp}X$ when $A$ is a sufficiently nice local DG-ring with $\mathrm{amp}A<\infty$. The first application of our results is to eliminate the hypothesis on $A$. We improve the J${\o}$rgensen's result (see Proposition \ref{lem5.10}), and prove the DG-version of NIT and Krull's principle ideal theorem (see Proposition \ref{lem1.4}). Bass \cite{B} raised the question
  whether existence of a finitely generated module
of finite injective dimension would imply Cohen-Macaulayness of the ring, and later known as the Bass conjecture.  For $0\not\simeq X\in\mathrm{D}^{\mathrm{b}}_\mathrm{f}(A)$ with $\mathrm{injdim}_AX<\infty$, Minamoto \cite[Conjecture 2.36]{Mi19} conjectured that
$\mathrm{amp}X\geq \mathrm{amp}A$.
Shaul \cite[Theorem 5.22(2)]{Shaul} showed the DG-version of Bass Conjecture and
 solved this conjecture under the
noetherian model assumption, and conjectured in \cite[Remark 5.25]{Shaul} that this assumption is redundant.
  As the second application, Minamoto's Conjecture or Shaul's Conjecture is solved (see Proposition \ref{lem5.9}).
  Vasconcelos \cite{V} conjectured that rings of type one are Gorenstein. The last application is to prove the DG-version of the Vasconcelos conjecture (see Proposition \ref{lem7.7}).

\bigskip
\section{\bf Preliminaries and some facts}
\vspace{1.5mm}
This section is devoted to recalling some notions and basic facts.

\vspace{1.5mm}
{\bf
Projective, injective and flat dimensions.} For a DG-module $X\in \mathrm{D}(A)$,
we recall the definition of the projective, injective and flat dimensions of $X$ introduced by Bird, Shaul, Sridhar and Williamson in \cite{BSSW}, which are different from the definition, $\mathrm{pd}X$, $\mathrm{id}X$ and $\mathrm{fd}X$, introduced by Yekutieli \cite{ye16}.

The \emph{projective dimension} of $X$ is defined by
\begin{center}$\mathrm{projdim}_AX=\mathrm{inf}\{n\in\mathbb{Z}\hspace{0.03cm}|\hspace{0.03cm}\mathrm{H}^i(\mathrm{RHom}_A(X,Y))=0\ \textrm{for\ any}\ Y\in\mathrm{D}^\mathrm{b}(A)\ \textrm{and}\ i>n+\mathrm{sup}Y\}$.\end{center}

The \emph{injective dimension} of $X$ is defined by
\begin{center}$\mathrm{injdim}_AX=\mathrm{inf}\{n\in\mathbb{Z}\hspace{0.03cm}|\hspace{0.03cm}\mathrm{H}^i(\mathrm{RHom}_A(Y,X))=0\ \textrm{for\ any}\ Y\in\mathrm{D}^\mathrm{b}(A)\ \textrm{and}\ i>n-\mathrm{inf}Y\}$.\end{center}

The \emph{flat dimension} of $X$ is defined by
\begin{center}$\mathrm{flatdim}_AX=\mathrm{inf}\{n\in\mathbb{Z}\hspace{0.03cm}|\hspace{0.03cm}\mathrm{H}^{-i}(Y\otimes_A^\mathrm{L}X)=0\ \textrm{for\ any}\ Y\in\mathrm{D}^\mathrm{b}(A)\ \textrm{and}\ i>n-\mathrm{inf}Y\}$.\end{center}

For $X\in\mathrm{D}^-(A)$ and $Y\in\mathrm{D}^+(A)$, it follows by \cite[Theorems 2.22, 3.21, 4.13]{Mi18} that \begin{center}$\mathrm{projdim}_AX=\mathrm{pd}X-\mathrm{sup}X$,\end{center}\begin{center}$\mathrm{injdim}_AY=\mathrm{id}Y+\mathrm{inf}Y$,\end{center}
\begin{center}$\mathrm{flatdim}_AX=\mathrm{fd}X-\mathrm{sup}X$.\end{center}

\begin{lem}\label{lem0.11} Let $A$ be a DG-ring with $\mathrm{amp}A<\infty$, $X\in\mathrm{D}^{-}(A)$ and $Y\in\mathrm{D}^{+}(A)$.

$(1)$ If $\mathrm{projdim}_AX<\infty$, then $\mathrm{inf}X\geq\mathrm{inf}A-\mathrm{projdim}_AX$, i.e. $X\in\mathrm{D}^{\mathrm{b}}(A)$.

 $(2)$ If $\mathrm{injdim}_AY<\infty$, then $\mathrm{sup}Y\leq-\mathrm{inf}A+\mathrm{injdim}_AY$, i.e. $Y\in\mathrm{D}^{\mathrm{b}}(A)$.

 $(3)$ If $\mathrm{flatdim}_AX<\infty$, then $\mathrm{inf}X\geq\mathrm{inf}A-\mathrm{flatdim}_AX$, i.e. $X\in\mathrm{D}^{\mathrm{b}}(A)$.
\end{lem}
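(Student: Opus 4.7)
The plan is to deduce each inequality directly from the defining formula for the corresponding dimension by plugging in a judiciously chosen test DG-module that lies in $\mathrm{D}^{\mathrm{b}}(A)$; the hypothesis $\mathrm{amp}A<\infty$ is precisely what makes the candidate test objects admissible.

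For (2), I would set $n=\mathrm{injdim}_AY$ and apply the defining condition $H^i(\mathrm{RHom}_A(Z,Y))=0$ for all $Z\in\mathrm{D}^{\mathrm{b}}(A)$ and $i>n-\inf Z$ with $Z=A$. Since $\mathrm{RHom}_A(A,Y)\simeq Y$, this immediately gives $H^i(Y)=0$ for $i>n-\inf A$, i.e.\ $\sup Y\leq -\inf A+\mathrm{injdim}_AY$. For (3), I would set $n=\mathrm{flatdim}_AX$ and apply the defining condition $H^{-i}(Y\otimes^{\mathrm{L}}_AX)=0$ with $Y=A$; from $A\otimes^{\mathrm{L}}_AX\simeq X$ one obtains $H^{-i}(X)=0$ for $i>n-\inf A$, i.e.\ $\inf X\geq \inf A-\mathrm{flatdim}_AX$. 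In both cases the admissibility of $A$ as a test object is exactly the hypothesis $\mathrm{amp}A<\infty$.

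For (1) the test object $A$ no longer suffices, since $\mathrm{RHom}_A(X,A)$ does not recover $H^{\ast}(X)$. I would instead fix an injective cogenerator $E$ of $\mathrm{Mod}\,H^0(A)$ and form the cofree DG-module $E_A:=\mathrm{Hom}^{*}_{H^0(A)}(A,E)$. This $E_A$ is K-injective over $A$, and the coinduction--restriction adjunction together with the exactness of $\mathrm{Hom}_{H^0(A)}(-,E)$ produces a natural identification $H^i(\mathrm{RHom}_A(Z,E_A))\cong (H^{-i}(Z))^{\vee}$ for every $Z\in\mathrm{D}(A)$, where $(-)^{\vee}=\mathrm{Hom}_{H^0(A)}(-,E)$. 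Taking $Z=A$ shows $E_A\in\mathrm{D}^{\mathrm{b}}(A)$ with $\sup E_A=-\inf A$ (this uses $\mathrm{amp}A<\infty$). Substituting $Y=E_A$ into the defining vanishing for $\mathrm{projdim}_AX$ then gives $(H^{-i}(X))^{\vee}=0$ for $i>\mathrm{projdim}_AX-\inf A$, and faithfulness of the cogenerator $E$ forces $H^{-i}(X)=0$ in the same range, yielding $\inf X\geq \inf A-\mathrm{projdim}_AX$.

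The main obstacle is verifying the two properties of $E_A$ used in the argument for (1): the K-injectivity over $A$ (so that $\mathrm{RHom}_A(-,E_A)$ is computed pointwise by $\mathrm{Hom}^{*}_A(-,E_A)$), and the Matlis-type identification $H^i(\mathrm{RHom}_A(-,E_A))\cong(H^{-i}(-))^{\vee}$ together with the attendant bound $\sup E_A=-\inf A$. Both follow from the coinduction--restriction adjunction and the injectivity of $E$ over $H^0(A)$; once they are in place, each of (1)--(3) reduces to a single substitution into the definition of the relevant dimension, and no explicit projective, flat, or injective resolution of $X$ or $Y$ is needed.
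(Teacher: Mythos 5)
Your proposal is correct in substance and takes a genuinely different route from the paper's. The paper proves (1) by induction on $\mathrm{projdim}_AX+\sup X$, peeling off a semi-free piece at each step via Minamoto's approximation lemmas, dispatches (2) ``by duality,'' and then deduces (3) from (2) by dualizing against a DG-module $E$ whose $\mathrm{H}^0$ is $\mathrm{Hom}_{\mathbb{Z}}(\mathrm{H}^0(A),\mathbb{Q}/\mathbb{Z})$ and invoking \cite[Theorem 4.10]{s18}. You instead obtain (2) and (3) in one line each by substituting the single admissible test object $Y=A$ into the defining vanishing conditions (which is exactly where $\mathrm{amp}A<\infty$ enters), and you handle (1) with a coinduced Matlis-type test object, mirroring the paper's treatment of (3) rather than (1). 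What your route buys is uniformity and economy: each part becomes a single substitution, (2) and (3) become trivial, and the semi-free approximation machinery disappears; what it costs is that you must set up and verify the properties of the cofree test object $E_A$, which the paper only needs for (3).

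There is one technical slip to repair. You define $E_A=\mathrm{Hom}^{*}_{\mathrm{H}^0(A)}(A,E)$ with $E$ an injective cogenerator of $\mathrm{Mod}\,\mathrm{H}^0(A)$, but $A$ is not a graded $\mathrm{H}^0(A)$-module: each $A^i$ is an $A^0$-module, and $\mathrm{H}^0(A)$ is a quotient of $A^0$, not a subring, so neither this $\mathrm{Hom}$ nor the coinduction--restriction adjunction you invoke is available as written, and an injective $\mathrm{H}^0(A)$-module need not remain injective over $A^0$. The fix is to coinduce from $A^0$ (or from $\mathbb{Z}$): take $E$ an injective cogenerator of $\mathrm{Mod}\,A^0$, for instance $E=\mathrm{Hom}_{\mathbb{Z}}(A^0,\mathbb{Q}/\mathbb{Z})$, and set $E_A:=\mathrm{Hom}^{*}_{A^0}(A,E)$. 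Then the graded adjunction gives $\mathrm{Hom}^{*}_A(Z,E_A)\cong\mathrm{Hom}^{*}_{A^0}(Z,E)$ for every DG-$A$-module $Z$; since $\mathrm{Hom}_{A^0}(-,E)$ is faithfully exact, $E_A$ is K-injective, $\mathrm{H}^i(\mathrm{RHom}_A(Z,E_A))\cong\mathrm{Hom}_{A^0}(\mathrm{H}^{-i}(Z),E)$, and $\sup E_A=-\inf A$, $\inf E_A=-\sup A=0$, so $E_A\in\mathrm{D}^{\mathrm{b}}(A)$ precisely when $\mathrm{amp}A<\infty$. With this replacement your argument for (1) goes through exactly as you describe.
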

\begin{proof} We just prove (1) since (2) follows by duality.

(1) We use induction on $n=\mathrm{projdim}_AX+\mathrm{sup}X$. If $n=0$, then $X\in\mathcal{P}[-\mathrm{sup}X]$ by \cite[Lemma 2.14]{Mi18} and so $\mathrm{inf}X\geq\mathrm{inf}A-\mathrm{projdim}_AX$. Assume that $n\geq1$. By \cite[Lemma 2.13]{Mi18}, there is an exact triangle $Z\rightarrow P\rightarrow X\rightsquigarrow$ so that $P\in\mathcal{P}[-\mathrm{sup}X]$ and $\mathrm{projdim}_AZ+\mathrm{sup}Z\leq\mathrm{projdim}_AZ+\mathrm{sup}X=n-1$, it follows by induction that $\mathrm{inf}X\geq\mathrm{inf}A-\mathrm{projdim}_AX$.

(3) For $\mathrm{H}^0(A)$-module $\mathrm{Hom}_{\mathbb{Z}}(\mathrm{H}^0(A),\mathbb{Q}/\mathbb{Z})$, it follows by \cite[Theorem 5.7]{s18} that there is a DG-module $E$ so that $\mathrm{H}^0(E)=\mathrm{Hom}_{\mathbb{Z}}(\mathrm{H}^0(A),\mathbb{Q}/\mathbb{Z})$. Hence \cite[Theorem 4.10]{s18} and (2) imply that $\mathrm{inf}X=-\mathrm{sup}\mathrm{RHom}_A(X,E)\geq\mathrm{inf}A-\mathrm{injdim}_A\mathrm{RHom}_A(X,E)=\mathrm{inf}A-\mathrm{flatdim}_AX$.
\end{proof}

\begin{lem}\label{lem0.15} {\rm (\cite{BSSW}).} Let $A$ be a DG-ring with $\mathrm{amp}A<\infty$ and $X,Y,Z\in\mathrm{D}(A)$.

$(1)$ If either $X\in\mathrm{D}^{-}_{\mathrm{f}}(A),\ Y\in\mathrm{D}^{+}(A)$ and $\mathrm{injdim}_AZ<\infty$, or $X\in\mathrm{D}^{\mathrm{b}}_{\mathrm{f}}(A)$ and $\mathrm{projdim}_AX<\infty$, then the
natural map \begin{center}$X\otimes^\mathrm{L}_A\mathrm{RHom}_A(Y,Z)\rightarrow\mathrm{RHom}_A(\mathrm{RHom}_A(X,Y),Z)$\end{center}
is an isomorphism in $\mathrm{D}(A)$.

$(2)$ If either $X\in\mathrm{D}^{-}_{\mathrm{f}}(A),\ Y\in\mathrm{D}^{+}(A)$ and $\mathrm{flatdim}_AZ<\infty$, or $X\in\mathrm{D}^{\mathrm{b}}_{\mathrm{f}}(A)$ and
 $\mathrm{projdim}_AX<\infty$, then the
natural map \begin{center}$\mathrm{RHom}_A(X,Y)\otimes^\mathrm{L}_AZ\rightarrow\mathrm{RHom}_A(X,Y\otimes^\mathrm{L}_AZ)$\end{center}
is an isomorphism in $\mathrm{D}(A)$.
\end{lem}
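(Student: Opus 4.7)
The plan is to verify both evaluation morphisms by reduction to the trivial base case $X=A$. When $X=A$, the left and right sides of (1) both collapse to $\mathrm{RHom}_A(Y,Z)$, and the two sides of (2) both collapse to $Y\otimes^\mathrm{L}_AZ$; each natural map is manifestly the identity in that case. Since each morphism is a natural transformation of triangulated functors in $X$ and commutes with finite direct sums and shifts, it is automatically a quasi-isomorphism whenever $X$ is represented by a bounded semi-free DG-module built from finitely many shifts of $A$. The task is then to extend this test computation to the two classes of $X$ allowed in the hypotheses.

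For the subcase $X\in\mathrm{D}^{\mathrm{b}}_{\mathrm{f}}(A)$ with $\mathrm{projdim}_AX<\infty$, I would invoke Lemma \ref{lem0.11}(1) together with the formula $\mathrm{projdim}_AX=\mathrm{pd}X-\mathrm{sup}X$ recalled in the paper to obtain a \emph{bounded} semi-free resolution $P\to X$ whose components are finitely generated free DG-modules. Both evaluation morphisms can then be verified on $P$ by a finite induction on its length, using the triangulated structure, and no hypothesis on $Z$ is required. For the subcase $X\in\mathrm{D}^{-}_{\mathrm{f}}(A)$, noetherianness of $A$ permits a semi-free resolution $P\to X$, in general unbounded below, with each $P^{\,i}$ finitely generated free over $A$. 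The brutal truncation $P_{\geq -n}$ is bounded semi-free, so the evaluation morphism is a quasi-isomorphism on $P_{\geq -n}$ for each $n$; I would then argue that the finite-dimensionality of $Z$ (injective for (1), flat for (2)) forces, in any fixed cohomological degree, the map on $P_{\geq -n}$ to compute the map on $X$ once $n$ is sufficiently large.

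The main obstacle I expect lies in this last step: controlling the cohomology of $\mathrm{RHom}_A(\mathrm{RHom}_A(X,Y),Z)$, respectively $\mathrm{RHom}_A(X,Y\otimes^\mathrm{L}_AZ)$, in terms of the truncations $P_{\geq -n}$ of the semi-free resolution. This is essentially a ``way-out'' lemma in the spirit of Hartshorne's \emph{Residues and Duality}, and it relies precisely on the definitions of $\mathrm{injdim}_A$ and $\mathrm{flatdim}_A$ given at the start of the section: both bound how far $\mathrm{RHom}_A(-,Z)$ or $-\otimes^\mathrm{L}_AZ$ can shift cohomology in terms of the $\mathrm{inf}$ (hence the amplitude) of the input. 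With those bounds, a direct amplitude estimate, applied to the triangle $P_{<-n}\to P\to P_{\geq -n}\rightsquigarrow$ after one applies the relevant functor, makes the truncation argument converge and completes the reduction to the base case $X=A$.
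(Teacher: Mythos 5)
The paper does not prove this lemma: it is stated with the citation to \cite{BSSW} and used as a black box, so there is no paper argument to compare against. Evaluating your sketch on its own terms: the overall strategy (base case $X=A$, thick-subcategory/induction argument for the perfect case, then a way-out/truncation argument for $X\in\mathrm{D}^-_{\mathrm{f}}(A)$ using the finite injective or flat dimension of $Z$) is the standard and correct one, and the amplitude estimates you invoke do converge. The perfect case is handled correctly: over a noetherian $A$ with $\mathrm{amp}A<\infty$, $X\in\mathrm{D}^{\mathrm{b}}_{\mathrm{f}}(A)$ with $\mathrm{projdim}_AX<\infty$ is a compact object, and both evaluation maps are isomorphisms by a thick-subcategory argument with no hypothesis on $Y$ or $Z$.

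There is, however, a genuine technical gap in the way-out step. For a DG-module $P$ over a nonpositive DG-ring $A$, the \emph{brutal truncation} $P_{\geq -n}$ of the underlying complex is \emph{not} an $A$-sub-DG-module (nor a quotient DG-module): the differential raises degree, so $d$ does not leave $\{\deg<-n\}$ invariant, while multiplication by $A^{<0}$ lowers degree, so it does not leave $\{\deg\geq -n\}$ invariant. Consequently $P_{\geq -n}$ is not a bounded semi-free DG-module, and you cannot feed it into the perfect case. The correct replacement is to take a \emph{pseudo-finite semi-free} resolution $P\xrightarrow{\simeq}X$ (which exists because $A$ is noetherian and $X\in\mathrm{D}^-_{\mathrm{f}}(A)$; this is a nontrivial input) with semi-free filtration $0=F_{-1}\subseteq F_0\subseteq F_1\subseteq\cdots$, $F_n/F_{n-1}$ a finite free DG-module generated in degree $s-n$ with $s=\sup X$. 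Each $F_n$ is then a genuine bounded semi-free sub-DG-module (bounded because $\mathrm{amp}A<\infty$), so the evaluation map is an isomorphism on $F_n$, and the cofibre $P/F_n$ is concentrated in degrees $\leq s-n-1$, which gives exactly the amplitude estimate you want: applying $\mathrm{RHom}_A(-,Y)$ pushes $\inf$ to $+\infty$, and then $\mathrm{RHom}_A(-,Z)$ with $\mathrm{injdim}_AZ<\infty$ (respectively $-\otimes^{\mathrm{L}}_AZ$ with $\mathrm{flatdim}_AZ<\infty$) pushes $\sup$ to $-\infty$, so the map on $P$ and on $F_n$ agree in any fixed degree for $n\gg0$. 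You also need the $\mathrm{injdim}$/$\mathrm{flatdim}$ bounds to hold for inputs in $\mathrm{D}^+(A)$ and $\mathrm{D}^-(A)$ rather than only $\mathrm{D}^{\mathrm{b}}(A)$ as in the definitions; this is true (pass to a bounded semi-injective or semi-flat resolution of $Z$) but should be said. With ``brutal truncation'' replaced by ``truncation of the semi-free filtration'' and these two inputs made explicit, the argument is complete.
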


\vspace{1.5mm}
{\bf Derived torsion and completion.} Let $\bar{\mathfrak{a}}\subseteq\mathrm{H}^0(A)$ be an ideal. Recall that
an $\mathrm{H}^0(A)$-module $\bar{X}$ is called \emph{$\bar{\mathfrak{a}}$-torsion} if for any $\bar{x}\in\bar{X}$ there exists $n\in\mathbb{N}$ such that $\bar{\mathfrak{a}}^n\bar{x}=0$.
The category $\mathrm{D}_{\bar{\mathfrak{a}}\textrm{-tor}}(A)$ consisting of DG-modules $X$ such that
 the $\mathrm{H}^0(A)$-module $\mathrm{H}^n(X)$ is $\bar{\mathfrak{a}}$-torsion for every $n$, is a triangulated subcategory
of $\mathrm{D}(A)$. Following \cite{s19}, the inclusion functor
$\mathrm{D}_{\bar{\mathfrak{a}}\textrm{-tor}}(A)\hookrightarrow\mathrm{D}(A)$
has a right adjoint
$\mathrm{D}(A)\rightarrow\mathrm{D}_{\bar{\mathfrak{a}}\textrm{-tor}}(A)$,
and composing this right adjoint with the inclusion, one obtains a triangulated
functor
\begin{center}$\mathrm{R}\Gamma_{\bar{\mathfrak{a}}}:\mathrm{D}(A)\rightarrow\mathrm{D}(A)$,\end{center}
which is called the \emph{derived $\bar{\mathfrak{a}}$-torsion functor}.
The \emph{derived
$\bar{\mathfrak{a}}$-completion functor} $\mathrm{L}\Lambda_{\bar{\mathfrak{a}}}:\mathrm{D}(A)\rightarrow\mathrm{D}(A)$ is defined by $\mathrm{L}\Lambda_{\bar{\mathfrak{a}}}(X):=\mathrm{RHom}_A(\mathrm{R}\Gamma_{\bar{\mathfrak{a}}}(A),X)$, which is left adjoint to $\mathrm{R}\Gamma_{\bar{\mathfrak{a}}}$ as endofunctors on $\mathrm{D}(A)$. The derived $\bar{\mathfrak{a}}$-adic completion $\mathrm{L}\Lambda_{\bar{\mathfrak{a}}}(A)$, denoted by $\mathrm{L}\Lambda(A,\bar{\mathfrak{a}})$, is itself a commutative noetherian DG-ring.

Given a commutative ring $A$ and $a\in A$, the telescope complex
associated to $A$ and $a$ is
\begin{center}$0\rightarrow\bigoplus_{n=0}^\infty A\rightarrow\bigoplus_{n=0}^\infty A\rightarrow0$\end{center}
in degrees 0, 1, with the differential being defined by
\begin{center}$d(e_i)=\left\{\begin{array}{ll}
     e_0\ \ \ \ \ \ \ \ \ \ \ \ \ \  i=0, \\
    e_{i-1}-ae_i\ \ \ \ i\geq1,
   \end{array}\right.$\end{center}
where $e_0,e_1,\cdots$ denote the standard basis of the countably generated
free $A$-module $\bigoplus_{n=0}^\infty A$.
We denote this complex by $\mathrm{Tel}(A;a)$.
Given a
sequence $\emph{\textbf{a}}=a_1,\cdots,a_n\in A$, we set
\begin{center}$\mathrm{Tel}(A;\emph{\textbf{a}}):=\mathrm{Tel}(A;a_1)\otimes_A\cdots\otimes_A\mathrm{Tel}(A;a_n)$,\end{center}
which is a bounded complex of free $A$-modules, called the telescope
complex associated to $A$ and $\emph{\textbf{a}}$.

For an ideal $\bar{\mathfrak{a}}\subseteq\mathrm{H}^0(A)$,
if $\mathfrak{a}= (a_1,\cdots,a_n)$ is a
sequence in $A^0$, whose image in $\mathrm{H}^0(A)$ generates $\bar{\mathfrak{a}}$, then for $X\in\mathrm{D}(A)$, there are
natural isomorphisms
\begin{center}$\mathrm{R}\Gamma_{\bar{\mathfrak{a}}}(X)\cong\mathrm{Tel}(A^0;\mathfrak{a})\otimes_{A^0}A\otimes_AX$, $\mathrm{L}\Lambda_{\bar{\mathfrak{a}}}(X)\cong\mathrm{Hom}_{A}(\mathrm{Tel}(A^0;\mathfrak{a})\otimes_{A^0}A,X)$.\end{center}

Let $(A,\bar{\mathfrak{m}})$ be a local DG-ring. For $X\in\mathrm{D}^-_\mathrm{f}(A)$ and $Y\in\mathrm{D}^+(A)$, it follows by \cite[Theorem 2.15]{Shaul} and \cite[Proposition 3.3]{Shaul} that $\mathrm{lc.dim}_AX=\mathrm{supR}\Gamma_{\bar{\mathfrak{m}}}(X)$ and $\mathrm{depth}_AY=\mathrm{infR}\Gamma_{\bar{\mathfrak{m}}}(Y)$.

\vspace{1.5mm}
{\bf Localization.} Let $\pi_A:A\rightarrow\mathrm{H}^0(A)$ be the canonical surjection and $\pi^0_A: A^0\rightarrow\mathrm{H}^0(A)$ be its degree 0 component. Given a prime ideal
$\bar{\mathfrak{p}}\in \mathrm{Spec}\mathrm{H}^0(A)$, let $\mathfrak{p}=(\pi^0_A)^{-1}(\bar{\mathfrak{p}})\in\mathrm{Spec}A^0$, and define $A_{\bar{\mathfrak{p}}}:=A\otimes_{A^0}A^0_\mathfrak{p}$. More generally, given $X\in\mathrm{D}(A)$, we define
\begin{center}$X_{\bar{\mathfrak{p}}}:=X\otimes_{A}A_{\bar{\mathfrak{p}}}=X\otimes_{A^0}A^0_\mathfrak{p}\in\mathrm{D}(A_{\bar{\mathfrak{p}}})$.\end{center} A prime ideal $\bar{\mathfrak{p}}\in\mathrm{Spec}\mathrm{H}^0(A)$ is called an \emph{associated
prime} of $X$ if $\mathrm{depth}_{A_{\bar{\mathfrak{p}}}}X_{\bar{\mathfrak{p}}}=\mathrm{inf}X_{\bar{\mathfrak{p}}}$. The set of associated primes of $X$ is denoted by $\mathrm{Ass}_AX$. The \emph{support} of $X$ is the set
\begin{center}$\mathrm{Supp}_AX:=\{\bar{\mathfrak{p}}\in \mathrm{Spec}\mathrm{H}^0(A)\hspace{0.03cm}|\hspace{0.03cm}X_{\bar{\mathfrak{p}}}\not\simeq0\}$.\end{center}
For $\bar{\mathfrak{p}}\in \mathrm{Spec}\mathrm{H}^0(A)$, denote by $E(A,\bar{\mathfrak{p}})$ the DG-module corresponding to the
injective hull $E(\mathrm{H}^{0}(A),\bar{\mathfrak{p}})$ of the residue field $\mathrm{H}^{0}(A)_{\bar{\mathfrak{p}}}/\bar{\mathfrak{p}}\mathrm{H}^{0}(A)_{\bar{\mathfrak{p}}}$.

Next we bring several necessary lemmas and their proofs.

\begin{lem}\label{lem0.2} $(1)$ For $X\in \mathrm{D}^{-}_{\mathrm{f}}(A)$ and $Y\in\mathrm{D}^{+}(A)$, one has
\begin{center}$\mathrm{inf}\mathrm{RHom}_A(X,Y)=\mathrm{inf}\{\mathrm{inf}\mathrm{RHom}_{A}(\mathrm{H}^\ell(X),Y)-\ell
\hspace{0.03cm}|\hspace{0.03cm}\ell\in\mathbb{Z}\}$.\end{center}

$(2)$ For $X\in \mathrm{D}^{-}_{\mathrm{f}}(A)$ and $Z\in \mathrm{D}^{-}(A)$, one has
\begin{center}$\mathrm{sup}(X\otimes_A^\mathrm{L}Z)=\mathrm{sup}\{\mathrm{sup}(\mathrm{H}^\ell(X)\otimes_A^\mathrm{L}Z)+\ell
\hspace{0.03cm}|\hspace{0.03cm}\ell\in\mathbb{Z}\}$.\end{center}

$(3)$ Let $(A,\bar{\mathfrak{m}},\bar{\kappa})$ be a local DG-ring. For $X,Y\in\mathrm{D}^-_\mathrm{f}(A)$, one has
\begin{center}$\mathrm{sup}(X\otimes^\mathrm{L}_AY)=\mathrm{sup}X+\mathrm{sup}Y$.\end{center}
\end{lem}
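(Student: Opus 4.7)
The plan is to derive the claim from part (2) by two successive applications combined with a derived-Nakayama computation on finitely generated $\mathrm{H}^0(A)$-modules. First, I would invoke (2) with $Z=Y$ to obtain
\begin{center}$\mathrm{sup}(X\otimes^\mathrm{L}_AY)=\mathrm{sup}\{\mathrm{sup}(\mathrm{H}^\ell(X)\otimes^\mathrm{L}_AY)+\ell\hspace{0.03cm}|\hspace{0.03cm}\ell\in\mathbb{Z}\}$.\end{center}
Then, for each $\ell$ with $\mathrm{H}^\ell(X)\neq 0$, I would apply (2) once more, this time with $Y$ playing the role of ``$X$'' and the finitely generated $\mathrm{H}^0(A)$-module $\mathrm{H}^\ell(X)$ playing the role of ``$Z$'', to rewrite
\begin{center}$\mathrm{sup}(\mathrm{H}^\ell(X)\otimes^\mathrm{L}_AY)=\mathrm{sup}\{\mathrm{sup}(\mathrm{H}^m(Y)\otimes^\mathrm{L}_A\mathrm{H}^\ell(X))+m\hspace{0.03cm}|\hspace{0.03cm}m\in\mathbb{Z}\}$.\end{center}

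The heart of the argument is the following derived-Nakayama claim: for any two nonzero finitely generated $\mathrm{H}^0(A)$-modules $M,N$, viewed as DG-$A$-modules concentrated in degree zero via the surjection $A\twoheadrightarrow\mathrm{H}^0(A)$, one has $\mathrm{sup}(M\otimes^\mathrm{L}_AN)=0$. To get the upper bound $\leq 0$, I would take a semi-free resolution $P\to M$ over the non-positive DG-ring $A$ chosen so that $P^i=0$ for all $i>0$; since $N$ sits in degree zero, $(P\otimes_AN)^i=0$ for all $i>0$. For the matching lower bound, I would identify $\mathrm{H}^0(M\otimes^\mathrm{L}_AN)$ with $M\otimes_AN$ directly from the resolution, and observe that $M\otimes_AN=M\otimes_{\mathrm{H}^0(A)}N$ because the $A$-actions on both factors factor through $\mathrm{H}^0(A)$. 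Nakayama's lemma applied to the local ring $(\mathrm{H}^0(A),\bar{\mathfrak{m}},\bar{\kappa})$ then yields $M\otimes_{\mathrm{H}^0(A)}N\neq 0$.

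With the claim in hand, $\mathrm{sup}(\mathrm{H}^m(Y)\otimes^\mathrm{L}_A\mathrm{H}^\ell(X))=0$ whenever both cohomology modules are nonzero. Substituting into the formulas above, for each $\ell$ with $\mathrm{H}^\ell(X)\neq 0$ one obtains $\mathrm{sup}(\mathrm{H}^\ell(X)\otimes^\mathrm{L}_AY)=\mathrm{sup}\{m\hspace{0.03cm}|\hspace{0.03cm}\mathrm{H}^m(Y)\neq 0\}=\mathrm{sup}Y$, and therefore $\mathrm{sup}(X\otimes^\mathrm{L}_AY)=\mathrm{sup}\{\mathrm{sup}Y+\ell\hspace{0.03cm}|\hspace{0.03cm}\mathrm{H}^\ell(X)\neq 0\}=\mathrm{sup}X+\mathrm{sup}Y$, as desired.

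I expect the main obstacle to be the derived-Nakayama step: carefully verifying that $\mathrm{H}^0(M\otimes^\mathrm{L}_AN)$ coincides with $M\otimes_{\mathrm{H}^0(A)}N$ in the DG-setting. This requires selecting the semi-free resolution in non-positive degrees (possible because $A$ is non-positive and $M$ is concentrated in degree zero) and directly checking that the zeroth cohomology of the derived tensor product collapses to the classical tensor product over $\mathrm{H}^0(A)$. Once this identification is secured, the rest of the argument is a combinatorial substitution from part (2).
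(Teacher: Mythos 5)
Your proposal addresses only part (3), taking parts (1) and (2) as given; that is legitimate, since the paper itself proves (3) after (1) and (2), but it is worth noting that you have not supplied arguments for (1) and (2). For part (3) your route is correct and genuinely different from the paper's. The paper kills the maximal ideal first: it computes the top cohomology of $\bar{\kappa}\otimes^\mathrm{L}_A X\otimes^\mathrm{L}_A Y$ via the K\"unneth formula over the field $\bar{\kappa}$, identifies it with $\bigl(\bar{\kappa}\otimes_{\mathrm{H}^0(A)}\mathrm{H}^{\sup X}(X)\bigr)\otimes_{\bar{\kappa}}\bigl(\bar{\kappa}\otimes_{\mathrm{H}^0(A)}\mathrm{H}^{\sup Y}(Y)\bigr)$, and concludes by ordinary Nakayama on $\mathrm{H}^0(A)$-modules. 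You instead apply part (2) twice to reduce everything to the degree-zero case of two finitely generated $\mathrm{H}^0(A)$-modules $M,N$, and then prove $\sup(M\otimes^\mathrm{L}_A N)=0$ by the right-exactness of derived tensor (giving $\mathrm{H}^0(M\otimes^\mathrm{L}_A N)\cong M\otimes_{\mathrm{H}^0(A)}N$) plus Nakayama. Both proofs ultimately rest on the same right-exactness fact about $\otimes^\mathrm{L}_A$ on $\mathrm{D}^-(A)$; yours makes the dependence on part (2) explicit and avoids any reference to $\bar{\kappa}$ until the very last step, while the paper's is shorter and self-contained for (3) because the one K\"unneth computation does all the bookkeeping at once. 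One small point to spell out in a final write-up: when you quotient the semi-free resolution computation to identify $\mathrm{H}^0(M\otimes^\mathrm{L}_A N)$ with $M\otimes_{A^0}N=M\otimes_{\mathrm{H}^0(A)}N$, you should note explicitly that $\mathrm{H}^0(P)=P^0/\operatorname{im}d^{-1}$ (which uses $P^1=0$) and that tensoring over $A^0$ with $N$ is right exact, so passing to $\mathrm{H}^0$ commutes with $-\otimes_{A^0}N$; this is standard but is exactly the point where the non-positivity of $A$ is used.
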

\begin{proof} (1) We first show that $\mathrm{inf}\mathrm{RHom}_A(X,Y)=\mathrm{inf}\{\mathrm{depth}_{A_{\bar{\mathfrak{p}}}}Y_{\bar{\mathfrak{p}}}
-\mathrm{sup}X_{\bar{\mathfrak{p}}}\hspace{0.03cm}|\hspace{0.03cm}\bar{\mathfrak{p}}\in\mathrm{Spec}\mathrm{H}^0(A)\}$. For each $\bar{\mathfrak{p}}\in\mathrm{Spec}\mathrm{H}^0(A)$ one has the next (in)equalities
\begin{center}$\begin{aligned}r:=\mathrm{inf}\mathrm{RHom}_A(X,Y)
&\leq\mathrm{inf}\mathrm{RHom}_{A_{\bar{\mathfrak{p}}}}(X_{\bar{\mathfrak{p}}},Y_{\bar{\mathfrak{p}}})\\
&\leq\mathrm{depth}_{A_{\bar{\mathfrak{p}}}}\mathrm{RHom}_{A_{\bar{\mathfrak{p}}}}(X_{\bar{\mathfrak{p}}},Y_{\bar{\mathfrak{p}}})\\
&=\mathrm{depth}_{A_{\bar{\mathfrak{p}}}}Y_{\bar{\mathfrak{p}}}
-\mathrm{sup}X_{\bar{\mathfrak{p}}},\end{aligned}$\end{center}
where the first inequality is by \cite[Proposition 2.4]{Mi19}, the second inequality is by \cite[Proposition 3.3]{Shaul} and the equality is by \cite[Proposition 4.9]{ya20} and the Nakayama's lemma. If $\bar{\mathfrak{p}}\in\mathrm{Ass}_{\mathrm{H}^0(A)}\mathrm{H}^r(\mathrm{RHom}_A(X,Y))$, then the claimed equality holds by \cite[Proposition 3.3]{Shaul}.

Set $i=\mathrm{inf}\mathrm{RHom}_A(X,Y)$ and $j=\mathrm{inf}\{\mathrm{inf}\mathrm{RHom}_{A}(\mathrm{H}^\ell(X),Y)-\ell
\hspace{0.03cm}|\hspace{0.03cm}\ell\in\mathbb{Z}\}$.
 For $\bar{\mathfrak{p}}\in\mathrm{Spec}\mathrm{H}^0(A)$, set $s=\mathrm{sup}X_{\bar{\mathfrak{p}}}$. By the preceding proof, we have
\begin{center}$\mathrm{depth}_{A_{\bar{\mathfrak{p}}}}Y_{\bar{\mathfrak{p}}}
-\mathrm{sup}X_{\bar{\mathfrak{p}}}=\mathrm{depth}_{A_{\bar{\mathfrak{p}}}}Y_{\bar{\mathfrak{p}}}
-s\geq\mathrm{inf}\mathrm{RHom}_{A}(\mathrm{H}^s(X),Y)-s\geq j$,\end{center}and hence
 $i\geq j$.
On the other hand, let $\ell\in\mathbb{Z}$ and $\mathrm{H}(\mathrm{RHom}_{A}(\mathrm{H}^\ell(X),Y))\not\simeq0$. Choose $\bar{\mathfrak{p}}\in\mathrm{Spec}\mathrm{H}^0(A)$ such that $\mathrm{inf}\mathrm{RHom}_{A}(\mathrm{H}^\ell(X),Y)=\mathrm{depth}_{A_{\bar{\mathfrak{p}}}}Y_{\bar{\mathfrak{p}}}$. Then
\begin{center}$\mathrm{RHom}_{A}(\mathrm{H}^\ell(X),Y)-\ell\geq\mathrm{depth}_{A_{\bar{\mathfrak{p}}}}Y_{\bar{\mathfrak{p}}}
-\mathrm{sup}X_{\bar{\mathfrak{p}}}\geq i$.\end{center}Therefore, the equality holds.

(2) Set $E=\oplus_{\bar{\mathfrak{m}}\in\mathrm{Max}\mathrm{H}^0(A)}E(A,\bar{\mathfrak{m}})$. We have the following equalities
\begin{center}$\begin{aligned}\mathrm{sup}(X\otimes_A^\mathrm{L}Z)
&=-\mathrm{inf}\mathrm{RHom}_{A}(X,\mathrm{RHom}_{A}(Z,E))\\
&=-\mathrm{inf}\{\mathrm{inf}\mathrm{RHom}_{A}(\mathrm{H}^\ell(X),\mathrm{RHom}_{A}(Z,E))-\ell
\hspace{0.03cm}|\hspace{0.03cm}\ell\in\mathbb{Z}\}\\
&=\mathrm{sup}\{\ell-\mathrm{inf}\mathrm{RHom}_{A}(\mathrm{H}^\ell(X)\otimes_A^\mathrm{L}Z,E)
\hspace{0.03cm}|\hspace{0.03cm}\ell\in\mathbb{Z}\}\\
&=\mathrm{sup}\{\ell+\mathrm{sup}(\mathrm{H}^\ell(X)\otimes_A^\mathrm{L}Z)
\hspace{0.03cm}|\hspace{0.03cm}\ell\in\mathbb{Z}\},\end{aligned}$\end{center}
where the first and the last ones are by \cite[Theorem 4.10]{s18}, the second one is by (1).

(3) As $\mathrm{H}^{\mathrm{sup}X}(X)$ and $\mathrm{H}^{\mathrm{sup}Y}(Y)$ are finitely generated $\mathrm{H}^0(A)$-modules,
\begin{center}$\mathrm{H}^{\mathrm{sup}X+\mathrm{sup}X}(\bar{\kappa}\otimes^\mathrm{L}_AX
\otimes^\mathrm{L}_AY)\cong\bar{\kappa}\otimes_{\mathrm{H}^0(A)}\mathrm{H}^{\mathrm{sup}X}(X)\otimes_{\bar{\kappa}} \bar{\kappa}\otimes_{\mathrm{H}^0(A)}\mathrm{H}^{\mathrm{sup}Y}(Y)\neq0$,\end{center}
which implies the desired equality.
\end{proof}

By the equality (2.14) of \cite[Proposition 2.13]{Shaul}, for $X\in\mathrm{D}^{-}(A)$, one has \begin{center}$\mathrm{lc.dim}_AX=\mathrm{sup}\{\mathrm{dim}\mathrm{H}^0(A)/\bar{\mathfrak{p}}+\mathrm{sup}X_{\bar{\mathfrak{p}}}
\hspace{0.03cm}|\hspace{0.03cm}\bar{\mathfrak{p}}\in\mathrm{Spec}\mathrm{H}^0(A)\}$.\end{center}

\begin{lem}\label{lem0.4} For $X\in \mathrm{D}^{-}_{\mathrm{f}}(A)$ and $Y\in \mathrm{D}^{-}(A)$, there exists an equality
\begin{center}$\mathrm{lc.dim}_A(X\otimes_A^\mathrm{L}Y)=\mathrm{sup}\{\mathrm{lc.dim}_A(\mathrm{H}^\ell(X)\otimes_A^\mathrm{L}Y)+\ell
\hspace{0.03cm}|\hspace{0.03cm}\ell\in\mathbb{Z}\}$.\end{center}
\end{lem}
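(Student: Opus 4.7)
The plan is to combine the displayed formula for local cohomological Krull dimension stated right before Lemma \ref{lem0.4}, namely
\[
\mathrm{lc.dim}_AZ=\sup\{\dim \mathrm{H}^0(A)/\bar{\mathfrak{p}}+\sup Z_{\bar{\mathfrak{p}}}\mid\bar{\mathfrak{p}}\in\mathrm{Spec}\mathrm{H}^0(A)\}
\]
with the ``spectral-sequence type'' identity already proved in Lemma \ref{lem0.2}(2), and then interchange the two suprema. In other words, I would reduce the DG Krull-dimension identity to the corresponding supremum identity, which has already been done prime by prime.

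First I would localize at an arbitrary $\bar{\mathfrak{p}}\in\mathrm{Spec}\mathrm{H}^0(A)$ and verify the routine compatibilities: since $A^0_{\mathfrak{p}}$ is flat over $A^0$, localization commutes with $\otimes^{\mathrm{L}}_A$, so $(X\otimes^{\mathrm{L}}_AY)_{\bar{\mathfrak{p}}}\simeq X_{\bar{\mathfrak{p}}}\otimes^{\mathrm{L}}_{A_{\bar{\mathfrak{p}}}}Y_{\bar{\mathfrak{p}}}$, and likewise $\mathrm{H}^\ell(X)_{\bar{\mathfrak{p}}}\cong\mathrm{H}^\ell(X_{\bar{\mathfrak{p}}})$. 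Furthermore $X_{\bar{\mathfrak{p}}}\in\mathrm{D}^{-}_{\mathrm{f}}(A_{\bar{\mathfrak{p}}})$ and $Y_{\bar{\mathfrak{p}}}\in\mathrm{D}^{-}(A_{\bar{\mathfrak{p}}})$, so Lemma \ref{lem0.2}(2) applied over $A_{\bar{\mathfrak{p}}}$ yields
\[
\sup(X\otimes^{\mathrm{L}}_AY)_{\bar{\mathfrak{p}}}
=\sup\bigl\{\sup\bigl((\mathrm{H}^\ell(X)\otimes^{\mathrm{L}}_AY)_{\bar{\mathfrak{p}}}\bigr)+\ell\bigm|\ell\in\mathbb{Z}\bigr\}.
\]

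Next, I would plug this into the Krull-dimension formula for $X\otimes^{\mathrm{L}}_AY$:
\[
\mathrm{lc.dim}_A(X\otimes^{\mathrm{L}}_AY)
=\sup_{\bar{\mathfrak{p}}}\sup_{\ell}\Bigl\{\dim \mathrm{H}^0(A)/\bar{\mathfrak{p}}+\sup\bigl((\mathrm{H}^\ell(X)\otimes^{\mathrm{L}}_AY)_{\bar{\mathfrak{p}}}\bigr)+\ell\Bigr\}.
\]
Swapping the two suprema, which is formal, and again invoking the displayed Krull-dimension formula gives
\[
\sup_{\ell}\Bigl\{\mathrm{lc.dim}_A(\mathrm{H}^\ell(X)\otimes^{\mathrm{L}}_AY)+\ell\Bigr\},
\]
which is the claimed identity.

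The only real point to watch is that the supremum over primes may a priori be $-\infty$ when $\mathrm{H}^\ell(X)\otimes^{\mathrm{L}}_AY\simeq 0$, but then the contribution of such $\ell$ is harmless on both sides (by the convention $\sup\emptyset=-\infty$). The main obstacle is therefore conceptually light: it is merely to confirm the hypotheses of Lemma \ref{lem0.2}(2) are preserved under the localization $A\rightsquigarrow A_{\bar{\mathfrak{p}}}$, which follows because $\mathrm{H}^0(A_{\bar{\mathfrak{p}}})=\mathrm{H}^0(A)_{\bar{\mathfrak{p}}}$ is noetherian and $\mathrm{H}^\ell(X)_{\bar{\mathfrak{p}}}$ is finitely generated over it for all $\ell$.
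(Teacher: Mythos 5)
Your proof is correct and follows exactly the same route as the paper's: express $\mathrm{lc.dim}_A$ via the supremum over primes $\bar{\mathfrak{p}}$ of $\dim\mathrm{H}^0(A)/\bar{\mathfrak{p}}+\sup(-)_{\bar{\mathfrak{p}}}$, apply Lemma~\ref{lem0.2}(2) after localizing, use compatibility of localization with $\otimes^{\mathrm{L}}$ and with cohomology, and interchange the two suprema. Your added remarks on $\sup\emptyset=-\infty$ and on preservation of the hypotheses of Lemma~\ref{lem0.2}(2) under localization make explicit points the paper leaves implicit, but the argument is the same.
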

\begin{proof}  We have the following computations
\begin{center}$\begin{aligned}\mathrm{lc.dim}(X\otimes_A^\mathrm{L}Y)
&=\mathrm{sup}\{\mathrm{dim}\mathrm{H}^0(A)/\bar{\mathfrak{p}}+\mathrm{sup}(X\otimes_A^\mathrm{L}Y)_{\bar{\mathfrak{p}}}
\hspace{0.03cm}|\hspace{0.03cm}\bar{\mathfrak{p}}\in\mathrm{Spec}\mathrm{H}^0(A)\}\\
&=\mathrm{sup}\{\mathrm{dim}\mathrm{H}^0(A)/\bar{\mathfrak{p}}+
\mathrm{sup}\{\mathrm{sup}(\mathrm{H}^\ell(X_{\bar{\mathfrak{p}}})\otimes_{A_{\bar{\mathfrak{p}}}}^\mathrm{L}Y_{\bar{\mathfrak{p}}})+\ell
\hspace{0.03cm}|\hspace{0.03cm}\ell\in\mathbb{Z}\}
\hspace{0.03cm}|\hspace{0.03cm}\bar{\mathfrak{p}}\in\mathrm{Spec}\mathrm{H}^0(A)\}\\
&=\mathrm{sup}\{\mathrm{dim}\mathrm{H}^0(A)/\bar{\mathfrak{p}}+
\mathrm{sup}(\mathrm{H}^\ell(X)\otimes_{A}^\mathrm{L}Y)_{\bar{\mathfrak{p}}}+\ell
\hspace{0.03cm}|\hspace{0.03cm}\bar{\mathfrak{p}}\in\mathrm{Spec}\mathrm{H}^0(A),\ell\in\mathbb{Z}\}\\
&=\mathrm{sup}\{\mathrm{lc.dim}_A(\mathrm{H}^\ell(X)\otimes_{A}^\mathrm{L}Y)+\ell
\hspace{0.03cm}|\hspace{0.03cm}\ell\in\mathbb{Z}\},\end{aligned}$\end{center}
where the second equality follows by Lemma \ref{lem0.2}(2), as desired.
\end{proof}

\vspace{1.5mm}
{\bf Dualizing DG-module.} Following \cite{s18}, a dualizing DG-module $R$ over a DG-ring $A$ is a DG-module $R\in\mathrm{D}^{+}_{\mathrm{f}}(A)$ such that $\mathrm{injdim}_AR<\infty$ and the natural map $A\rightarrow\mathrm{RHom}_A(R,R)$ is an isomorphism in $\mathrm{D}(A)$. If $A$ is local then dualizing DG-module over $A$ is unique up to shift. We say that $R$ is normalized if $\mathrm{inf}R=-\mathrm{dim}\mathrm{H}^{0}(A)$. The next result is a DG-version of Grothendieck's local duality, which is a nice generalization of \cite[Theorem 7.26]{s18}.

\begin{thm}\label{lem0.7} Let $(A,\bar{\mathfrak{m}},\bar{\kappa})$ be a local DG-ring with $\mathrm{amp}A<\infty$ and $R$ a normalized dualizing DG-module over $A$.

$(1)$ For every $X\in\mathrm{D}_\mathrm{f}(A)$, one has an isomorphism in $\mathrm{D}(A)$
\begin{center}$\mathrm{R}\Gamma_{\bar{\mathfrak{m}}}(X)\simeq\mathrm{RHom}_A(\mathrm{RHom}_A(X,R),E(A,\bar{\mathfrak{m}}))$. \end{center}

$(2)$ For every $Y\in\mathrm{D}^-_{\mathrm{f}}(A)$, one has an isomorphism in $\mathrm{D}(A)$
 \begin{center}$\mathrm{L}\Lambda^{\bar{\mathfrak{m}}}(Y)\simeq\mathrm{RHom}_A(\mathrm{RHom}_A(Y,E(A,\bar{\mathfrak{m}})),R)$. \end{center}
\end{thm}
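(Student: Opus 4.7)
The plan is to establish Part (1) first for $X \in \mathrm{D}^-_\mathrm{f}(A)$ via a standard chain of natural isomorphisms, then extend to arbitrary $X \in \mathrm{D}_\mathrm{f}(A)$ through a truncation argument; Part (2) will then be derived from Part (1) by combining the Greenlees--May (MGM) adjunction with Lemma \ref{lem0.15}.

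For the bounded case of Part (1), I would compose the natural isomorphisms
\begin{equation*}
\mathrm{R}\Gamma_{\bar{\mathfrak{m}}}(X) \simeq X \otimes^\mathrm{L}_A \mathrm{R}\Gamma_{\bar{\mathfrak{m}}}(A) \simeq X \otimes^\mathrm{L}_A \mathrm{RHom}_A(R, E(A,\bar{\mathfrak{m}})) \simeq \mathrm{RHom}_A(\mathrm{RHom}_A(X, R), E(A,\bar{\mathfrak{m}})).
\end{equation*}
The first isomorphism is the tensor decomposition coming from the telescope formula for $\mathrm{R}\Gamma_{\bar{\mathfrak{m}}}$. For the second, use the standard fact $\mathrm{R}\Gamma_{\bar{\mathfrak{m}}}(R) \simeq E(A,\bar{\mathfrak{m}})$ for a normalized dualizing DG-module together with the dualizing identity $A \simeq \mathrm{RHom}_A(R, R)$, then apply Lemma \ref{lem0.15}(2) to the bounded flat complex $\mathrm{Tel}(A^0;\mathfrak{m}) \otimes_{A^0} A$ to push $\mathrm{R}\Gamma_{\bar{\mathfrak{m}}}$ through $\mathrm{RHom}_A(R, -)$. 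The third isomorphism is Lemma \ref{lem0.15}(1)(a), applicable because $X \in \mathrm{D}^-_\mathrm{f}(A)$, $R \in \mathrm{D}^+(A)$, and $\mathrm{injdim}_A E(A,\bar{\mathfrak{m}}) < \infty$.

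To extend Part (1) to possibly unbounded $X \in \mathrm{D}_\mathrm{f}(A)$, I would use the smart truncations $\tau^{\leq n} X \in \mathrm{D}^-_\mathrm{f}(A)$ together with the natural transformation supplied by Lemma \ref{lem0.15}(1). For any fixed cohomological degree $k$, both $H^k \mathrm{R}\Gamma_{\bar{\mathfrak{m}}}(X)$ and $H^k \mathrm{RHom}_A(\mathrm{RHom}_A(X, R), E(A,\bar{\mathfrak{m}}))$ will depend only on $\tau^{\leq n} X$ for $n$ large enough: the left side because the telescope complex is bounded, and the right side because $R \in \mathrm{D}^b_\mathrm{f}(A)$ and $E(A,\bar{\mathfrak{m}})$ has finite injective dimension, so each composition of $\mathrm{RHom}_A$ shifts cohomological bounds by a finite amount. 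The bounded case then yields the isomorphism in each degree, hence in $\mathrm{D}(A)$; this truncation bookkeeping is the main technical obstacle.

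For Part (2), starting from the definition $\mathrm{L}\Lambda^{\bar{\mathfrak{m}}}(Y) = \mathrm{RHom}_A(\mathrm{R}\Gamma_{\bar{\mathfrak{m}}}(A), Y)$ and plugging Part (1) at $X = A$ yields $\mathrm{L}\Lambda^{\bar{\mathfrak{m}}}(Y) \simeq \mathrm{RHom}_A(\mathrm{RHom}_A(R, E(A,\bar{\mathfrak{m}})), Y)$. On the other side, Lemma \ref{lem0.15}(1)(a) applied with first argument $Y \in \mathrm{D}^-_\mathrm{f}(A)$, middle $E(A,\bar{\mathfrak{m}}) \in \mathrm{D}^+(A)$, and target $R$ (using $\mathrm{injdim}_A R < \infty$) rewrites the desired right-hand side as $Y \otimes^\mathrm{L}_A \mathrm{RHom}_A(E(A,\bar{\mathfrak{m}}), R)$. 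The MGM adjunction $\mathrm{RHom}_A(\mathrm{R}\Gamma_{\bar{\mathfrak{m}}}(R), R) \simeq \mathrm{RHom}_A(R, \mathrm{L}\Lambda^{\bar{\mathfrak{m}}}(R))$, combined with the fact that $\mathrm{L}\Lambda^{\bar{\mathfrak{m}}}$ commutes with $\mathrm{RHom}_A(R, -)$ and with the dualizing property, then identifies $\mathrm{RHom}_A(E(A,\bar{\mathfrak{m}}), R) \simeq \mathrm{L}\Lambda^{\bar{\mathfrak{m}}}(A)$. Finally, the derived-Nakayama identity $Y \otimes^\mathrm{L}_A \mathrm{L}\Lambda^{\bar{\mathfrak{m}}}(A) \simeq \mathrm{L}\Lambda^{\bar{\mathfrak{m}}}(Y)$ for $Y \in \mathrm{D}^-_\mathrm{f}(A)$ closes the argument.
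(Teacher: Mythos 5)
Your proof is correct, and the bounded-above case of Part (1) is essentially the paper's computation (the chain $\mathrm{R}\Gamma_{\bar{\mathfrak{m}}}(X)\simeq X\otimes^{\mathrm{L}}_A\mathrm{RHom}_A(R,E)\simeq\mathrm{RHom}_A(\mathrm{RHom}_A(X,R),E)$ is the same one the paper writes via the telescope complex, using Lemma \ref{lem0.15} and the identity $\mathrm{R}\Gamma_{\bar{\mathfrak{m}}}(R)\simeq E(A,\bar{\mathfrak{m}})$). Where you diverge is in the two extension steps. For the unbounded case of Part (1), you argue that both sides of the natural map stabilize degreewise under the truncations $\tau^{\leq n}X$, which is a valid limit-type argument; the paper instead forms a single exact triangle $X'\to X\to X''$ with $X'\in\mathrm{D}^-_{\mathrm{f}}(A)$, $X''\in\mathrm{D}^+_{\mathrm{f}}(A)$ and concludes by the five lemma, quoting Shaul's Theorem 7.26 to handle the bounded-below piece $X''$. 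The paper's route is shorter because it reuses the already-known $\mathrm{D}^+_{\mathrm{f}}$ case, while yours is more self-contained at the cost of the bookkeeping you flag. For Part (2) the divergence is sharper: you rewrite the left side via Part (1) as $\mathrm{RHom}_A(\mathrm{RHom}_A(R,E),Y)$, the right side as $Y\otimes^{\mathrm{L}}_A\mathrm{RHom}_A(E,R)$, identify $\mathrm{RHom}_A(E,R)\simeq\mathrm{L}\Lambda^{\bar{\mathfrak{m}}}(A)$, and then invoke the derived Nakayama-type identity $Y\otimes^{\mathrm{L}}_A\mathrm{L}\Lambda^{\bar{\mathfrak{m}}}(A)\simeq\mathrm{L}\Lambda^{\bar{\mathfrak{m}}}(Y)$ for $Y\in\mathrm{D}^-_{\mathrm{f}}(A)$. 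That last identity is true in this setting but is an extra nontrivial input that you should cite explicitly (it is a cohomological-completeness statement, not a formal adjunction consequence). The paper avoids it entirely: it uses the $R$-biduality $Y\simeq\mathrm{RHom}_A(\mathrm{RHom}_A(Y,R),R)$ together with the adjunction $\mathrm{L}\Lambda^{\bar{\mathfrak{m}}}(\mathrm{RHom}_A(W,R))\simeq\mathrm{RHom}_A(\mathrm{R}\Gamma_{\bar{\mathfrak{m}}}(W),R)$ and Lemma \ref{lem0.15}(2), yielding $\mathrm{L}\Lambda^{\bar{\mathfrak{m}}}(Y)\simeq\mathrm{RHom}_A(\mathrm{RHom}_A(Y,\mathrm{R}\Gamma_{\bar{\mathfrak{m}}}(R)),R)\simeq\mathrm{RHom}_A(\mathrm{RHom}_A(Y,E),R)$ in three purely formal lines. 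Both approaches are sound, but the paper's Part (2) is more economical and stays entirely within the tools already established in Section 2.
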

\begin{proof} (1) Set $E=E(A,\bar{\mathfrak{m}})$ and $\mathrm{Tel}(A;\bar{\mathfrak{m}})=\mathrm{Tel}(A^0;\mathfrak{m})\otimes_{A^0}A$. First assume that $X\in\mathrm{D}^-_\mathrm{f}(A)$. It follows by Lemma \ref{lem0.15} and \cite[Proposition 7.25]{s18} that
\begin{center}$X\otimes_A^\mathrm{L}\mathrm{Tel}(A,\bar{\mathfrak{m}})\simeq X\otimes_A^\mathrm{L}\mathrm{RHom}_A(R,R\otimes_A^\mathrm{L}\mathrm{Tel}(A,\bar{\mathfrak{m}}))\simeq\mathrm{RHom}_A(\mathrm{RHom}_A(X,R),E)$. \end{center}Now assume that $X\in\mathrm{D}_\mathrm{f}(A)$, set $X'=\cdots\longrightarrow X^{n-2}\xrightarrow{d_{n-2}} X^{n-1}\xrightarrow{d_{n-1}}\mathrm{im}d^{n-1}\longrightarrow0$ and $X''=0\longrightarrow X^n/\mathrm{im}d^{n-1}\xrightarrow{\bar{d}_{n}} X^{n+1}\xrightarrow{d_{n+1}}X^{n+2}\longrightarrow\cdots$. The exact triangle $X'\rightarrow X\rightarrow X''\rightsquigarrow$ induces a commutative diagram of exact triangles in $\mathrm{D}(A)$:
\begin{center}$\xymatrix@C=13pt@R=15pt{
\mathrm{R}\Gamma_{\bar{\mathfrak{m}}}(X') \ar[d]\ar[r]& \mathrm{R}\Gamma_{\bar{\mathfrak{m}}}(X)\ar[d]\ar[r] &\mathrm{R}\Gamma_{\bar{\mathfrak{m}}}(X'')\ar[d]\\
\mathrm{RHom}_A(\mathrm{RHom}_A(X',R),E)\ar[r]& \mathrm{RHom}_A(\mathrm{RHom}_A(X,R),E)\ar[r] &\mathrm{RHom}_A(\mathrm{RHom}_A(X'',R),E)}$
\end{center}As $X'\in\mathrm{D}^-_\mathrm{f}(A)$ and $X''\in\mathrm{D}^+_\mathrm{f}(A)$, it follows by \cite[Theorem 7.26]{s18} and the preceding proof that the first and the third vertical morphisms are isomorphisms in $\mathrm{D}(A)$. Consequently, the middle vertical morphism is an isomorphism in $\mathrm{D}(A)$, as desired.

 (2) Since $Y\simeq\mathrm{RHom}_A(\mathrm{RHom}_A(Y,R),R)$, $\mathrm{L}\Lambda^{\bar{\mathfrak{m}}}(Y)\simeq\mathrm{RHom}_A(\mathrm{R}\Gamma_{\bar{\mathfrak{m}}}(\mathrm{RHom}_A(Y,R)),R)
\simeq\mathrm{RHom}_A(\mathrm{RHom}_A(Y,\mathrm{R}\Gamma_{\bar{\mathfrak{m}}}(R)),R)
\simeq\mathrm{RHom}_A(\mathrm{RHom}_A(Y,E(A,\bar{\mathfrak{m}})),R)$ by Lemma \ref{lem0.15}(2) and \cite[Proposition 7.25]{s18}.
\end{proof}

\begin{cor}\label{lem0.8} Let $(A,\bar{\mathfrak{m}},\bar{\kappa})$ be a local DG-ring with $\mathrm{amp}A<\infty$ and $R$ a dualizing DG-module for $A$. For $X\in\mathrm{D}^-_\mathrm{f}(A)$ and $Y\in\mathrm{D}^+_\mathrm{f}(A)$, one has
\begin{center}$\mathrm{inf}\mathrm{RHom}_A(X,R)=\mathrm{depth}_AR-\mathrm{lc.dim}_AX$, $\mathrm{depth}_A\mathrm{RHom}_A(X,R)=\mathrm{depth}_AR-\mathrm{sup}X$,\end{center}
\begin{center}$\mathrm{sup}\mathrm{RHom}_A(Y,R)=\mathrm{depth}_AR-\mathrm{depth}_AY$, $\mathrm{lc.dim}_A\mathrm{RHom}_A(Y,R)=\mathrm{depth}_AR-\mathrm{inf}Y$,\end{center}
\begin{center}$\mathrm{injdim}_A\mathrm{RHom}_A(X,R)=\mathrm{projdim}_AX+\mathrm{depth}_AR$, $\mathrm{projdim}_A\mathrm{RHom}_A(Y,R)=\mathrm{injdim}_AY-\mathrm{depth}_AR$.\end{center}In particular, if $R$ is normalized then $\mathrm{depth}A=-\mathrm{sup}R$ and
$\mathrm{injdim}_{A}R=0=\mathrm{depth}_{A}R$.
\end{cor}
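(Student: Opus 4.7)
The plan is to derive the six equalities as straightforward consequences of Theorem~\ref{lem0.7} together with Matlis duality, reducing first to the normalized case. Any dualizing DG-module $R$ has the form $R=R_0[d]$ for a normalized $R_0$, and every quantity on either side of any of the six equalities shifts by $-d$ under this shift, so it suffices to establish the formulas when $R=R_0$ is normalized. In this case I first claim $\mathrm{depth}_AR_0=0$: applying Theorem~\ref{lem0.7}(1) with $X=R_0$ gives $\mathrm{R}\Gamma_{\bar{\mathfrak{m}}}(R_0)\simeq\mathrm{RHom}_A(\mathrm{RHom}_A(R_0,R_0),E)\simeq E$, and Matlis duality \cite[Theorem~4.10]{s18} yields $\inf E=-\sup A=0$.

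For formula (1), I take $\sup$ of the isomorphism $\mathrm{R}\Gamma_{\bar{\mathfrak{m}}}(X)\simeq\mathrm{RHom}_A(\mathrm{RHom}_A(X,R_0),E)$ from Theorem~\ref{lem0.7}(1) and combine $\mathrm{lc.dim}_AX=\sup\mathrm{R}\Gamma_{\bar{\mathfrak{m}}}(X)$ with Matlis duality $\sup\mathrm{RHom}_A(-,E)=-\inf(-)$. Formula (2) follows from (1) via tensor--hom adjunction,
\[
\mathrm{RHom}_A(\bar\kappa,\mathrm{RHom}_A(X,R_0))\simeq\mathrm{RHom}_A(\bar\kappa\otimes_A^{\mathrm L}X,R_0),
\]
whose infimum equals $\mathrm{depth}_A\mathrm{RHom}_A(X,R_0)$ on one side, and by (1) applied to $\bar\kappa\otimes_A^{\mathrm L}X\in\mathrm{D}^-_{\mathrm f}(A)$ equals $-\mathrm{lc.dim}_A(\bar\kappa\otimes_A^{\mathrm L}X)=-\sup X$ on the other; the last step uses that $\bar\kappa\otimes_A^{\mathrm L}X$ has support $\{\bar{\mathfrak{m}}\}$, so its lc-dim collapses to its sup, which is $\sup X$ by Lemma~\ref{lem0.2}(3). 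Formulas (3) and (4) then come from (2) and (1) by reflexivity $Y\simeq\mathrm{RHom}_A(Z,R_0)$ with $Z:=\mathrm{RHom}_A(Y,R_0)\in\mathrm{D}^-_{\mathrm f}(A)$.

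For the dimension formulas (5) and (6), I use the definitions of $\mathrm{projdim}$ and $\mathrm{injdim}$ as suprema over test DG-modules in $\mathrm{D}^{\mathrm b}(A)$, together with the adjunction
\[
\mathrm{RHom}_A(W,\mathrm{RHom}_A(X,R_0))\simeq\mathrm{RHom}_A(X,\mathrm{RHom}_A(W,R_0)).
\]
For the upper bound in (5): since $\mathrm{injdim}_AR_0<\infty$, $\mathrm{RHom}_A(W,R_0)\in\mathrm{D}^{\mathrm b}(A)$ for $W\in\mathrm{D}^{\mathrm b}(A)$, and combining the estimate $\sup\mathrm{RHom}_A(W,R_0)\leq -\inf W$ (i.e.\ $\mathrm{injdim}_AR_0=0$) with the defining inequality $\sup\mathrm{RHom}_A(X,-)\leq\mathrm{projdim}_AX+\sup(-)$ gives $\mathrm{injdim}_A\mathrm{RHom}_A(X,R_0)\leq\mathrm{projdim}_AX$. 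For the matching lower bound I would pull a $\mathrm{D}^{\mathrm b}$-witness certifying $\mathrm{projdim}_AX$ (the DG-analogue of $\mathrm{projdim}_AX=\sup\mathrm{RHom}_A(X,\bar\kappa)$) across the duality to a witness for $\mathrm{injdim}_A\mathrm{RHom}_A(X,R_0)$. Formula (6) is the transpose of (5).

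The remaining ``in particular'' claims are obtained by specialization: with $\mathrm{depth}_AR=0$ already in hand, $\mathrm{depth}A=-\sup R$ is (3) with $Y=A$, while $\mathrm{injdim}_AR=0$ is (5) with $X=A$ (using $\mathrm{projdim}_AA=0$). The main obstacle will be the lower bounds in (5) and (6): extracting the full projective or injective dimension out of the duality requires a DG-version of the classical fact that these dimensions are detected by $\mathrm{RHom}$ into the residue field, together with careful attention to which boundedness hypothesis of Lemma~\ref{lem0.15} applies in each regime.
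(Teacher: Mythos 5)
Your reduction to the normalized case and your treatment of formulas (1)--(4) are essentially the paper's argument: Theorem~\ref{lem0.7} plus Matlis duality \cite[Theorem~4.10]{s18} gives $\inf\mathrm{RHom}_A(X,R_0)=-\mathrm{lc.dim}_AX$, adjunction over $\bar\kappa$ gives (2), and reflexivity for the dualizing DG-module gives (3)--(4). Your derivation of $\mathrm{depth}_AR_0=0$ by applying Theorem~\ref{lem0.7}(1) to $X=R_0$ and reading off $\inf E=-\sup A=0$ is a clean (and mildly different) route to the same fact; the paper instead cites \cite[Lemma~2.7]{BSSW} for $\mathrm{injdim}_AR=\inf R+\mathrm{lc.dim}A$ and separately shows $\mathrm{injdim}_AR=\mathrm{depth}_AR$ from the identity $\bar\kappa\simeq\bar\kappa\otimes^\mathrm{L}_A\mathrm{RHom}_A(R,R)\simeq\mathrm{RHom}_{\bar\kappa}(\mathrm{RHom}_A(\bar\kappa,R),\mathrm{RHom}_A(\bar\kappa,R))$.

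For (5) and (6), however, you leave a genuine gap, and you are right to flag it. Two things are missing. First, your upper bound uses $\mathrm{injdim}_AR_0=0$, whereas your argument only delivers $\mathrm{depth}_AR_0=0$; you need either $\mathrm{injdim}_AR=\mathrm{depth}_AR$ (as the paper proves) or an independent citation for $\mathrm{injdim}_AR_0=0$. Second, the lower bound rests on the $\bar\kappa$-test formulas $\mathrm{projdim}_AX=-\inf(\bar\kappa\otimes^\mathrm{L}_AX)=\sup\mathrm{RHom}_A(X,\bar\kappa)$ and $\mathrm{injdim}_AZ=\sup\mathrm{RHom}_A(\bar\kappa,Z)$, which you allude to but never name or verify. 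The paper closes both gaps at once by invoking \cite[Corollary~2.31]{Mi19} and then evaluating, in one line, $\sup\mathrm{RHom}_A(\bar\kappa,\mathrm{RHom}_A(X,R))=\sup\mathrm{RHom}_{\bar\kappa}\bigl(\bar\kappa\otimes^\mathrm{L}_AX,\mathrm{RHom}_A(\bar\kappa,R)\bigr)=\sup\mathrm{RHom}_A(\bar\kappa,R)-\inf(\bar\kappa\otimes^\mathrm{L}_AX)=\mathrm{depth}_AR+\mathrm{projdim}_AX$; this gives the equality directly rather than as a meeting of two inequalities. Once you import that corollary, your ``pull $\bar\kappa$ across the duality'' step becomes exactly this computation (using $\mathrm{RHom}_A(\bar\kappa,R_0)\simeq\bar\kappa^r$ with $r\geq1$, which suffices), so the plan is salvageable, but as written the key ingredient is not in place.
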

\begin{proof} By Lemma \ref{lem0.15}(1), we have \begin{center}$\bar{\kappa}\otimes^\mathrm{L}_{A}\mathrm{RHom}_A(R,R)\simeq\mathrm{RHom}_A(\mathrm{RHom}_A(\bar{\kappa},R),R)\simeq
\mathrm{RHom}_{\bar{\kappa}}(\mathrm{RHom}_A(\bar{\kappa},R),\mathrm{RHom}_A(\bar{\kappa},R))$,\end{center} it follows by \cite[Corollary 2.3]{Mi19} that $0=-\mathrm{inf}(\bar{\kappa}\otimes^\mathrm{L}_{A}\mathrm{RHom}_A(R,R))=-\mathrm{inf}\mathrm{RHom}_A(\bar{\kappa},R)+\mathrm{sup}\mathrm{RHom}_A(\bar{\kappa},R)=
-\mathrm{depth}_AR+\mathrm{injdim}_AR$.
Set $R'=R[-\mathrm{inf}R-\mathrm{lc.dim}A]$. Then $R'$ is a normalized dualizing DG-module over $A$.
  Hence we have the following equalities
\begin{center}$\begin{aligned}\mathrm{inf}\mathrm{RHom}_A(X,R)
&=\mathrm{inf}\mathrm{RHom}_A(X,R')+\mathrm{inf}R+\mathrm{lc.dim}A\\
&=-\mathrm{sup}\mathrm{RHom}_A(\mathrm{RHom}_A(X,R'),E(A,\bar{\mathfrak{m}}))+\mathrm{inf}R+\mathrm{lc.dim}A\\
&=-\mathrm{sup}\mathrm{R}\Gamma_{\bar{\mathfrak{m}}}(X)+\mathrm{inf}R+\mathrm{lc.dim}A\\
&=-\mathrm{lc.dim}_AX+\mathrm{injdim}_AR\\
&=\mathrm{depth}_AR-\mathrm{lc.dim}_AX,\end{aligned}$\end{center}
where the second one is by \cite[Theorem 4.10]{s18}, the third one is by Theorem \ref{lem0.7}, the fourth one is by \cite[Lemma 2.7]{BSSW}.
We also have the following equalities
\begin{center}$\begin{aligned}\mathrm{sup}\mathrm{RHom}_A(Y,R)
&=\mathrm{sup}\mathrm{RHom}_A(Y,R')+\mathrm{inf}R+\mathrm{lc.dim}A\\
&=-\mathrm{inf}\mathrm{RHom}_A(\mathrm{RHom}_A(Y,R'),E(A,\bar{\mathfrak{m}}))+\mathrm{inf}R+\mathrm{lc.dim}A\\
&=-\mathrm{inf}\mathrm{R}\Gamma_{\bar{\mathfrak{m}}}(Y)+\mathrm{injdim}_AR\\
&=\mathrm{depth}_AR-\mathrm{depth}_AY,\end{aligned}$\end{center}
where the second one is by \cite[Theorem 4.10]{s18}, the third one is by Theorem \ref{lem0.7}.
Therefore, $\mathrm{lc.dim}_A\mathrm{RHom}_A(Y,R)=\mathrm{depth}_AR-\mathrm{inf}Y$ and $\mathrm{depth}_A\mathrm{RHom}_A(X,R)=\mathrm{depth}_AR-\mathrm{sup}X$.

By \cite[Corollary 2.31]{Mi19}, one has the following equalities:\begin{center}$\begin{aligned}\mathrm{injdim}_A\mathrm{RHom}_A(X,R)
&=\mathrm{sup}\mathrm{RHom}_A(\bar{\kappa},\mathrm{RHom}_A(X,R))\\
&=\mathrm{sup}\mathrm{RHom}_{\bar{\kappa}}(\bar{\kappa}\otimes^\mathrm{L}_AX,\mathrm{RHom}_A(\bar{\kappa},R))\\
&=\mathrm{sup}\mathrm{RHom}_A(\bar{\kappa},R)-\mathrm{inf}(\bar{\kappa}\otimes^\mathrm{L}_AX)\\
&=\mathrm{projdim}_AX+\mathrm{depth}_AR.\end{aligned}$\end{center}
By \cite[Corollary 2.3]{Mi19} and Lemma \ref{lem0.15}(1), one has the following equalities:\begin{center}$\begin{aligned}\mathrm{projdim}_A\mathrm{RHom}_A(Y,R)
&=-\mathrm{inf}(\bar{\kappa}\otimes^\mathrm{L}_A\mathrm{RHom}_A(Y,R))\\
&=-\mathrm{inf}\mathrm{RHom}_{\bar{\kappa}}(\mathrm{RHom}_A(\bar{\kappa},Y),\mathrm{RHom}_A(\bar{\kappa},R))\\
&=-\mathrm{inf}\mathrm{RHom}_A(\bar{\kappa},R)+\mathrm{sup}\mathrm{RHom}_A(\bar{\kappa},Y)\\
&=\mathrm{injdim}_AY-\mathrm{depth}_AR.\end{aligned}$\end{center}We obtain the equalities we seek.
\end{proof}

\section{\bf Intersection Theorem}
In this section, we examine the Intersection Theorem (IT) for DG-modules and study the local cohomology Krull dimension of the DG-modules $\mathrm{RHom}_A(X,Y)$ and $X\otimes_A^\mathrm{L}Y$, which extends some classical results proved by Foxby and Christensen in \cite{F,CF}.

We begin with the following main theorem of this section, which is an extension of IT in \cite[18.7]{F} and \cite[Theorem 2.6]{DY1} in DG-setting.

\begin{thm}\label{lem5.4} Let $(A,\bar{\mathfrak{m}},\bar{\kappa})$ be a local DG-ring with $\mathrm{amp}A<\infty$.

$(1)$ Let $X\in\mathrm{D}^-(A)$ with $\mathrm{flatdim}_AX<\infty$ and
$0\not\simeq Y\in\mathrm{D}^{-}_{\mathrm{f}}(A)$.
If $\mathrm{depth}_AX<\infty$, then
 \begin{center}$\mathrm{lc.dim}_AY\leq\mathrm{lc.dim}_A(X\otimes^\mathrm{L}_AY)+\mathrm{flatdim}_AX$.\end{center}

$(2)$ For $0\not\simeq X,Y\in\mathrm{D}^{-}_{\mathrm{f}}(A)$ with $\mathrm{projdim}_AX<\infty$, one has an inequality
 \begin{center}$\mathrm{lc.dim}_AY\leq\mathrm{lc.dim}_A(X\otimes^\mathrm{L}_AY)+\mathrm{projdim}_AX$.\end{center}
\end{thm}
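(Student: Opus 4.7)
The plan is to reduce the DG inequality to the classical Foxby Intersection Theorem over the Noetherian local ring $\mathrm{H}^0(A_{\bar{\mathfrak{q}}})$ after a careful choice of prime $\bar{\mathfrak{q}}$. First, observe that (2) follows from (1): finite projective dimension implies finite flat dimension, and for nonzero $X\in\mathrm{D}^-_\mathrm{f}(A)$ over a local DG-ring one has $\mathrm{depth}_AX\leq\mathrm{lc.dim}_AX<\infty$ by Nakayama. Hence the focus is on (1). Combining Lemma \ref{lem0.4} with the formula $\mathrm{lc.dim}_AY=\sup_\ell\{\dim_{\mathrm{H}^0(A)}\mathrm{H}^\ell(Y)+\ell\}$, the inequality reduces to the case $Y=M$, a finitely generated $\mathrm{H}^0(A)$-module concentrated in degree zero.

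For such $M$, I choose $\bar{\mathfrak{p}}\in\mathrm{Min}\,\mathrm{Supp}_{\mathrm{H}^0(A)}M$ with $\dim_{\mathrm{H}^0(A)}M=\dim\mathrm{H}^0(A)/\bar{\mathfrak{p}}$ and fix a saturated chain $\bar{\mathfrak{p}}=\bar{\mathfrak{p}}_0\subsetneq\bar{\mathfrak{p}}_1\subsetneq\cdots\subsetneq\bar{\mathfrak{p}}_r=\bar{\mathfrak{m}}$ of length $r=\dim\mathrm{H}^0(A)/\bar{\mathfrak{p}}$. Since $\mathrm{depth}_AX<\infty$ forces $X\not\simeq0$ and hence $\bar{\mathfrak{m}}\in\mathrm{Supp}_AX$, some $\bar{\mathfrak{p}}_{j+1}$ lies in $\mathrm{Supp}_AX$; let $j$ be the smallest such index and set $\bar{\mathfrak{q}}:=\bar{\mathfrak{p}}_{j+1}$. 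Elementary chain arithmetic then yields $r\leq\dim(\mathrm{H}^0(A)/\bar{\mathfrak{p}})_{\bar{\mathfrak{q}}}+\dim\mathrm{H}^0(A)/\bar{\mathfrak{q}}$, and minimality of $\bar{\mathfrak{q}}$ in $\mathrm{Supp}_AX\cap V(\bar{\mathfrak{p}})$ ensures that $(X\otimes^\mathrm{L}_AM)_{\bar{\mathfrak{q}}}$ has cohomology concentrated at the maximal ideal of $\mathrm{H}^0(A_{\bar{\mathfrak{q}}})$.

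Next, localize at $\bar{\mathfrak{q}}$ and set $\bar{A}_{\bar{\mathfrak{q}}}:=\mathrm{H}^0(A_{\bar{\mathfrak{q}}})$ and $\bar{X}_{\bar{\mathfrak{q}}}:=X_{\bar{\mathfrak{q}}}\otimes^\mathrm{L}_{A_{\bar{\mathfrak{q}}}}\bar{A}_{\bar{\mathfrak{q}}}$. Standard spectral-sequence arguments yield $\mathrm{flatdim}_{\bar{A}_{\bar{\mathfrak{q}}}}\bar{X}_{\bar{\mathfrak{q}}}\leq\mathrm{flatdim}_AX$, while the top cohomology of $\bar{X}_{\bar{\mathfrak{q}}}$ coincides with that of $X_{\bar{\mathfrak{q}}}$, forcing $\bar{X}_{\bar{\mathfrak{q}}}\not\simeq0$. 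Since $\bar{X}_{\bar{\mathfrak{q}}}\otimes^\mathrm{L}_{\bar{A}_{\bar{\mathfrak{q}}}}M_{\bar{\mathfrak{q}}}\simeq X_{\bar{\mathfrak{q}}}\otimes^\mathrm{L}_{A_{\bar{\mathfrak{q}}}}M_{\bar{\mathfrak{q}}}$, applying the classical Foxby Intersection Theorem over $\bar{A}_{\bar{\mathfrak{q}}}$ gives
\[
\dim(\mathrm{H}^0(A)/\bar{\mathfrak{p}})_{\bar{\mathfrak{q}}}\leq\sup(X_{\bar{\mathfrak{q}}}\otimes^\mathrm{L}_{A_{\bar{\mathfrak{q}}}}M_{\bar{\mathfrak{q}}})+\mathrm{flatdim}_AX,
\]
where the right-hand $\sup$ equals the classical Krull dimension of $X_{\bar{\mathfrak{q}}}\otimes^\mathrm{L}M_{\bar{\mathfrak{q}}}$ because its cohomology sits at the maximal ideal. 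Combining this with the chain estimate and the lower bound $\mathrm{lc.dim}_A(X\otimes^\mathrm{L}M)\geq\dim\mathrm{H}^0(A)/\bar{\mathfrak{q}}+\sup(X_{\bar{\mathfrak{q}}}\otimes^\mathrm{L}M_{\bar{\mathfrak{q}}})$ assembles the desired inequality.

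The main obstacle is verifying the applicability of the classical Foxby IT to the derived base change $\bar{X}_{\bar{\mathfrak{q}}}$: in part (1) it may have non-finitely-generated cohomology, which calls for the extended form of the classical IT for complexes of finite flat dimension without finite-generation hypotheses (the Foxby–Iyengar line of results). The chain-arithmetic step also requires care because $\mathrm{H}^0(A)$ need not be catenary, but the explicit saturated chain constructed above circumvents this.
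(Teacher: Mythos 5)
You take a genuinely different — and more roundabout — route than the paper. Both approaches begin with the same reduction via Lemma \ref{lem0.4}: $\mathrm{lc.dim}_A(X\otimes^\mathrm{L}_AY)=\sup_\ell\{\mathrm{lc.dim}_A(X\otimes^\mathrm{L}_A\mathrm{H}^\ell(Y))+\ell\}$, so it suffices to treat $Y$ a finitely generated $\mathrm{H}^0(A)$-module. From there, however, the paper performs a single global base change $X\mapsto X\otimes^\mathrm{L}_A\mathrm{H}^0(A)$, notes that this preserves flat dimension by \cite[Corollary 1.5]{BSSW} and has finite depth over $\mathrm{H}^0(A)$ because $\mathrm{depth}_AX<\infty$, and then directly cites the classical Intersection Theorem for complexes of finite flat dimension over the ordinary local ring $\mathrm{H}^0(A)$ (\cite[Theorem 2.3]{IMSW}, \cite[19.7]{F}). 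You instead localize at a carefully chosen prime $\bar{\mathfrak{q}}$, base change locally, and then invoke the same extended classical IT over $\mathrm{H}^0(A_{\bar{\mathfrak{q}}})$. Since that classical IT is exactly the tool you end up using anyway, and its proof already contains the localization machinery you are rebuilding, the detour buys nothing here.

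More seriously, the detour as written has a gap. Taking $j$ minimal with $\bar{\mathfrak{p}}_{j+1}\in\mathrm{Supp}_AX$ only shows that the chain primes $\bar{\mathfrak{p}}_0,\ldots,\bar{\mathfrak{p}}_j$ lie outside $\mathrm{Supp}_AX$; it does not make $\bar{\mathfrak{q}}=\bar{\mathfrak{p}}_{j+1}$ minimal in $\mathrm{Supp}_AX\cap V(\bar{\mathfrak{p}})$, because there can be primes between $\bar{\mathfrak{p}}$ and $\bar{\mathfrak{q}}$ that are in $\mathrm{Supp}_AX$ but not on the chosen chain. Hence the assertion that $(X\otimes^\mathrm{L}_AM)_{\bar{\mathfrak{q}}}$ has cohomology concentrated at the maximal ideal is unjustified; you would also need $\mathrm{Supp}_{\mathrm{H}^0(A)}M\subseteq V(\bar{\mathfrak{p}})$, which is not arranged unless you first replace $M$ by $\mathrm{H}^0(A)/\bar{\mathfrak{p}}$. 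If instead you choose $\bar{\mathfrak{q}}$ genuinely minimal in $\mathrm{Supp}_AX\cap V(\bar{\mathfrak{p}})$, you no longer have a saturated chain through it, and the inequality $r\leq\dim(\mathrm{H}^0(A)/\bar{\mathfrak{p}})_{\bar{\mathfrak{q}}}+\dim\mathrm{H}^0(A)/\bar{\mathfrak{q}}$ runs into exactly the non-catenary issue you flagged. All of these obstacles evaporate under the paper's global base-change argument, which never localizes.
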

\begin{proof} (1) A straightforward
computation based on
 Lemma \ref{lem0.4}:
\begin{center}$\begin{aligned}\mathrm{lc.dim}_A(X\otimes^\mathrm{L}_AY)
&=\mathrm{sup}\{\mathrm{lc.dim}_{A}(X\otimes^\mathrm{L}_A\mathrm{H}^\ell(Y))+\ell|\ell\in\mathbb{Z}\}\\
&=\mathrm{sup}\{\mathrm{lc.dim}_{\mathrm{H}^0(A)}(X\otimes^\mathrm{L}_A\mathrm{H}^0(A)\otimes^\mathrm{L}_{\mathrm{H}^0(A)}\mathrm{H}^\ell(Y))+\ell|\ell\in\mathbb{Z}\}\\
&\geq\mathrm{sup}\{\mathrm{dim}_{\mathrm{H}^0(A)}\mathrm{H}^\ell(Y))+\ell|\ell\in\mathbb{Z}\}-
\mathrm{flatdim}_{\mathrm{H}^0(A)}(X\otimes^\mathrm{L}_A\mathrm{H}^0(A))\\
&=\mathrm{lc.dim}_AY-\mathrm{flatdim}_AX,\end{aligned}$\end{center}  where the
 inequality follows by \cite[Theorem 2.3]{IMSW} and \cite[19.7]{F}, the last equality follows from \cite[Corollary 1.5]{BSSW}.

(2) As $\mathrm{projdim}_AX<\infty$, it follows by \cite[Theorem 4.5]{ya20} that $\mathrm{depth}_AX<\infty$.
Hence (1) yields the desired inequality.
\end{proof}

\begin{cor}\label{lem1.1'} Let $(A,\bar{\mathfrak{m}},\bar{\kappa})$ be a local DG-ring with $\mathrm{amp}A<\infty$ and $Y\in\mathrm{D}^{-}(A)$ with $\mathrm{depth}_AY<\infty$. If either $0\not\simeq X\in\mathrm{D}^{-}_{\mathrm{f}}(A)$ or $\mathrm{depth}_AX<\infty$, then there is an inequality, \begin{center}$\mathrm{flatdim}_AY\geq\mathrm{depth}_AX-\mathrm{lc.dim}_A(Y\otimes_A^\mathrm{L}X)$.\end{center}
\end{cor}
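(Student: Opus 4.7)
The inequality to prove is equivalent to
\[
\mathrm{depth}_A X \;\leq\; \mathrm{lc.dim}_A(Y\otimes_A^{\mathrm{L}}X) + \mathrm{flatdim}_A Y.
\]
The plan is to first dispose of the trivial case $\mathrm{flatdim}_A Y=\infty$, and then treat the two alternative hypotheses on $X$ separately. Under $\mathrm{flatdim}_A Y<\infty$, Lemma~\ref{lem0.11}(3) places $Y$ in $\mathrm{D}^{\mathrm{b}}(A)$, and the given $\mathrm{depth}_A Y<\infty$ in particular forces $Y\not\simeq 0$.

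In the first alternative $0\not\simeq X\in\mathrm{D}^{-}_{\mathrm{f}}(A)$, I would invoke Theorem~\ref{lem5.4}(1) with the roles of the two DG-modules interchanged: our $Y$ plays the part of the finite-flat-dimension factor (its depth-finiteness coming from the hypothesis on $Y$), while our $X$ plays the part of the finitely generated factor. The output
\[
\mathrm{lc.dim}_A X \;\leq\; \mathrm{lc.dim}_A(Y\otimes_A^{\mathrm{L}}X) + \mathrm{flatdim}_A Y,
\]
together with the universal bound $\mathrm{depth}_A X=\inf\mathrm{R}\Gamma_{\bar{\mathfrak{m}}}(X)\leq\sup\mathrm{R}\Gamma_{\bar{\mathfrak{m}}}(X)=\mathrm{lc.dim}_A X$—obtained from the two identifications recorded at the end of Section~2—closes this case.

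In the second alternative $\mathrm{depth}_A X<\infty$, $X$ is not known to be finitely generated, so Theorem~\ref{lem5.4}(1) does not apply. Instead I would work directly with the telescope-complex description of $\mathrm{R}\Gamma_{\bar{\mathfrak{m}}}$ recalled in Section~2, which yields the natural isomorphism
\[
\mathrm{R}\Gamma_{\bar{\mathfrak{m}}}(Y\otimes_A^{\mathrm{L}}X) \;\simeq\; Y\otimes_A^{\mathrm{L}}\mathrm{R}\Gamma_{\bar{\mathfrak{m}}}(X).
\]
Finiteness of $\mathrm{depth}_A X$ places $Z:=\mathrm{R}\Gamma_{\bar{\mathfrak{m}}}(X)$ in $\mathrm{D}^{\mathrm{b}}(A)$ with $\inf Z=\mathrm{depth}_A X$, so the BSSW defining inf-estimate for $\mathrm{flatdim}_A Y$ applies and gives
\[
\inf\mathrm{R}\Gamma_{\bar{\mathfrak{m}}}(Y\otimes_A^{\mathrm{L}}X) \;\geq\; \inf Z - \mathrm{flatdim}_A Y \;=\; \mathrm{depth}_A X - \mathrm{flatdim}_A Y.
\]
Chaining with $\inf\mathrm{R}\Gamma_{\bar{\mathfrak{m}}}(W)\leq\sup\mathrm{R}\Gamma_{\bar{\mathfrak{m}}}(W)\leq\mathrm{lc.dim}_A W$ (Grothendieck vanishing via the standard spectral sequence $H^i_{\bar{\mathfrak{m}}}(H^j(W))\Rightarrow H^{i+j}\mathrm{R}\Gamma_{\bar{\mathfrak{m}}}(W)$) applied to $W=Y\otimes_A^{\mathrm{L}}X$ delivers the target inequality.

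The step I expect to be most delicate is in the second alternative: one must confirm that $\mathrm{R}\Gamma_{\bar{\mathfrak{m}}}(X)$ genuinely sits in $\mathrm{D}^{\mathrm{b}}(A)$, so that the BSSW flat-dimension estimate is available without further gymnastics. The hypothesis $\mathrm{depth}_A X<\infty$ supplies the needed lower bound; in its absence a soft-truncation reduction on $\mathrm{R}\Gamma_{\bar{\mathfrak{m}}}(X)$ would be required. The comparison $\sup\mathrm{R}\Gamma_{\bar{\mathfrak{m}}}(-)\leq\mathrm{lc.dim}_A(-)$ via the spectral sequence is routine but should be stated carefully in the unbounded setting.
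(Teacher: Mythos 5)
Your first alternative ($0\not\simeq X\in\mathrm{D}^{-}_{\mathrm{f}}(A)$) matches the paper's argument exactly: swap the roles of $X$ and $Y$ in Theorem~\ref{lem5.4}(1), then chain $\mathrm{lc.dim}_AX=\sup\mathrm{R}\Gamma_{\bar{\mathfrak{m}}}(X)\geq\inf\mathrm{R}\Gamma_{\bar{\mathfrak{m}}}(X)=\mathrm{depth}_AX$, the middle step using $\mathrm{R}\Gamma_{\bar{\mathfrak{m}}}(X)\not\simeq 0$. No issues there.

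For the second alternative your route genuinely diverges from the paper's, and that is where a gap appears. The paper simply quotes the Auslander--Buchsbaum-type equality from \cite[Theorem 4.8]{ya20}, namely $\mathrm{flatdim}_AY=\mathrm{depth}_AX-\mathrm{depth}_A(Y\otimes_A^{\mathrm{L}}X)$; since the left side is finite by the WLOG reduction and $\mathrm{depth}_AX<\infty$, the equality forces $\mathrm{depth}_A(Y\otimes_A^{\mathrm{L}}X)<\infty$, and then $\mathrm{depth}_A(Y\otimes_A^{\mathrm{L}}X)\leq\mathrm{lc.dim}_A(Y\otimes_A^{\mathrm{L}}X)$ finishes. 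You instead re-derive only the lower bound $\inf\mathrm{R}\Gamma_{\bar{\mathfrak{m}}}(Y\otimes_A^{\mathrm{L}}X)\geq\mathrm{depth}_AX-\mathrm{flatdim}_AY$ from the telescope identification and the BSSW defining estimate, and then want to chain $\inf\mathrm{R}\Gamma_{\bar{\mathfrak{m}}}(W)\leq\sup\mathrm{R}\Gamma_{\bar{\mathfrak{m}}}(W)\leq\mathrm{lc.dim}_AW$. But the step $\inf\leq\sup$ is valid only when $\mathrm{R}\Gamma_{\bar{\mathfrak{m}}}(W)\not\simeq 0$, i.e.\ when $\mathrm{depth}_A(Y\otimes_A^{\mathrm{L}}X)<\infty$, and your argument never establishes this: the BSSW estimate gives a lower bound on $\inf$, which does not rule out $Y\otimes_A^{\mathrm{L}}\mathrm{R}\Gamma_{\bar{\mathfrak{m}}}(X)\simeq 0$ (the bound is then vacuously $+\infty\geq\cdots$ and the chain breaks). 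You flagged the boundedness of $Z=\mathrm{R}\Gamma_{\bar{\mathfrak{m}}}(X)$ as the delicate point, but that part is fine given $\mathrm{depth}_AX<\infty$ and Grothendieck vanishing; the real delicate point is the non-vanishing of $Y\otimes_A^{\mathrm{L}}Z$. This is exactly the content that \cite[Theorem 4.8]{ya20} supplies for free in the paper's version, via the equality rather than just one inequality. To close the gap within your framework you would need a Nakayama-type argument showing $Y\otimes_A^{\mathrm{L}}\mathrm{R}\Gamma_{\bar{\mathfrak{m}}}(X)\not\simeq 0$ (using, say, that $\mathrm{H}^{\inf Z}(Z)$ is a nonzero $\bar{\mathfrak{m}}$-torsion module together with $\mathrm{depth}_AY<\infty$), or else simply cite the depth formula as the paper does.
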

\begin{proof}  One can assume that $\mathrm{flatdim}_AY<\infty$.

If $0\not\simeq X\in\mathrm{D}^{-}_{\mathrm{f}}(A)$, then we have the next inequalities\begin{center}$\begin{aligned}\mathrm{flatdim}_AY
&\geq\mathrm{lc.dim}_AX-\mathrm{lc.dim}_A(Y\otimes_A^\mathrm{L}X)\\
&\geq\mathrm{supR}\Gamma_{\bar{\mathfrak{m}}}(X)-\mathrm{lc.dim}_A(Y\otimes_A^\mathrm{L}X)\\
&\geq\mathrm{infR}\Gamma_{\bar{\mathfrak{m}}}(X)-\mathrm{lc.dim}_A(Y\otimes_A^\mathrm{L}X)\\
&=\mathrm{depth}_AX-\mathrm{lc.dim}_A(Y\otimes_A^\mathrm{L}X),\end{aligned}$\end{center} where the first one is by Theorem \ref{lem5.4}(1), the second one is by \cite[Theorem 2.15]{Shaul}, the third one is by $\mathrm{R}\Gamma_{\bar{\mathfrak{m}}}(X)\not\simeq0$ and the equality is by \cite[Proposition 3.3]{Shaul}.

If $\mathrm{depth}_AX<\infty$, then $\mathrm{depth}_A(Y\otimes_A^\mathrm{L}X)<\infty$ and so $\mathrm{depth}_A(Y\otimes_A^\mathrm{L}X)\leq\mathrm{lc.dim}_A(Y\otimes_A^\mathrm{L}X)$. By \cite[Theorem 4.8]{ya20},
one has $\mathrm{flatdim}_AY
=\mathrm{depth}_AX-\mathrm{depth}_A(Y\otimes_A^\mathrm{L}X)$, which implies the desired inequality.
\end{proof}

\begin{cor}\label{lem1.1} Let $(A,\bar{\mathfrak{m}},\bar{\kappa})$ be a local DG-ring with $\mathrm{amp}A<\infty$, and let $X\in\mathrm{D}^{-}(A)$ with $\mathrm{flatdim}_AX<\infty$ and $0\not\simeq Y\in\mathrm{D}^{-}_{\mathrm{f}}(A)$.
If either $\mathrm{depth}_AX<\infty$ or $0\not\simeq X\in\mathrm{D}_{\mathrm{f}}(A)$, then there exists an inequality, \begin{center}$\mathrm{lc.dim}_A(X\otimes_A^\mathrm{L}Y)-\mathrm{depth}_A(X\otimes_A^\mathrm{L}Y)
\geq\mathrm{lc.dim}_AY-\mathrm{depth}_AY$.\end{center}In particular, $\mathrm{lc.dim}_AX-\mathrm{depth}_AX\geq\mathrm{lc.dim}A-\mathrm{depth}A$.
\end{cor}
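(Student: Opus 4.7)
The plan is to combine Theorem~\ref{lem5.4}(1) with the DG-version of the Auslander--Buchsbaum formula \cite[Theorem 4.8]{ya20}, in the same spirit as the proof of Corollary~\ref{lem1.1'} but with the roles of the two DG-modules interchanged. First I would reduce both alternatives of the hypothesis to the single statement $\mathrm{depth}_A X < \infty$: in the first alternative there is nothing to do, while in the second alternative $0\not\simeq X\in\mathrm{D}^-(A)\cap\mathrm{D}_\mathrm{f}(A)=\mathrm{D}^-_\mathrm{f}(A)$ is a nonzero object over the local DG-ring $A$, so a DG-Nakayama argument gives $\bar{\mathfrak{m}}\in\mathrm{Supp}_A X$, whence $\mathrm{R}\Gamma_{\bar{\mathfrak{m}}}(X)\not\simeq 0$ and therefore $\mathrm{depth}_A X=\mathrm{inf}\,\mathrm{R}\Gamma_{\bar{\mathfrak{m}}}(X)<\infty$ by \cite[Proposition 3.3]{Shaul}. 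Since $0\not\simeq Y\in\mathrm{D}^-_\mathrm{f}(A)$, the same reasoning yields $\mathrm{depth}_A Y<\infty$.

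With both $\mathrm{flatdim}_A X$ and $\mathrm{depth}_A X$ finite, Theorem~\ref{lem5.4}(1) is directly applicable and gives
\[ \mathrm{lc.dim}_A(X\otimes_A^\mathrm{L} Y)\ \geq\ \mathrm{lc.dim}_A Y\ -\ \mathrm{flatdim}_A X. \]
Simultaneously, with $\mathrm{flatdim}_A X$ and $\mathrm{depth}_A Y$ finite, the Auslander--Buchsbaum identity \cite[Theorem 4.8]{ya20} (exactly the one invoked at the end of the proof of Corollary~\ref{lem1.1'}, but with $X$ and $Y$ swapped) yields
\[ \mathrm{flatdim}_A X\ =\ \mathrm{depth}_A Y\ -\ \mathrm{depth}_A(X\otimes_A^\mathrm{L} Y). \]
Substituting the equality into the inequality and transposing $\mathrm{depth}_A(X\otimes_A^\mathrm{L} Y)$ to the left and $\mathrm{depth}_A Y$ to the right produces precisely $\mathrm{lc.dim}_A(X\otimes_A^\mathrm{L} Y)-\mathrm{depth}_A(X\otimes_A^\mathrm{L} Y)\geq \mathrm{lc.dim}_A Y-\mathrm{depth}_A Y$, as required.

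For the ``in particular'' assertion I would specialize to $Y=A$, which is legitimate since $\mathrm{amp}A<\infty$ ensures $A\in\mathrm{D}^-_\mathrm{f}(A)\setminus\{0\}$; the identifications $X\otimes_A^\mathrm{L} A\simeq X$, $\mathrm{lc.dim}_A A=\mathrm{lc.dim}\,A$ and $\mathrm{depth}_A A=\mathrm{depth}\,A$ then deliver the Cohen--Macaulay-defect comparison for $X$. No step in this plan is technically demanding: both Theorem~\ref{lem5.4}(1) and the Auslander--Buchsbaum formula are already in hand, so the argument reduces to a one-line manipulation. The only point that requires a small amount of care is the automatic finiteness of $\mathrm{depth}_A X$ under the second hypothesis alternative, which is the main obstacle and is handled by the standard DG-Nakayama observation together with \cite[Proposition 3.3]{Shaul}.
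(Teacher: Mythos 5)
Your proof is correct and follows essentially the same strategy as the paper's: both hinge on combining Theorem~\ref{lem5.4}(1) with \cite[Theorem~4.8]{ya20}. The only cosmetic difference is that the paper first records the Auslander--Buchsbaum identity $\mathrm{flatdim}_AX=\mathrm{depth}A-\mathrm{depth}_AX$ (via the $\bar{\kappa}\otimes^{\mathrm{L}}_A X$ computation) and then uses the depth-formula form of \cite[Theorem~4.8]{ya20}, whereas you invoke the same theorem directly in the form $\mathrm{flatdim}_AX=\mathrm{depth}_AY-\mathrm{depth}_A(X\otimes^{\mathrm{L}}_AY)$ exactly as in Corollary~\ref{lem1.1'} with the roles swapped; the two routes cancel to the same answer, yours with slightly less bookkeeping. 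One small point of care: in the second hypothesis alternative, the paper cites \cite[Proposition~3.5]{Shaul} (see Remark~\ref{lem:2.11}(2)) for $\mathrm{depth}_AX<\infty$, and you should note that $X\in\mathrm{D}^{\mathrm{b}}(A)$ follows from $\mathrm{flatdim}_AX<\infty$ and Lemma~\ref{lem0.11}(3) before invoking the $\mathrm{inf}\,\mathrm{R}\Gamma_{\bar{\mathfrak{m}}}(X)$ characterization of depth; your ``DG-Nakayama'' phrasing is a little informal but the underlying fact is the one the paper also uses.
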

\begin{proof} By \cite[Corollary 2.31]{Mi19} and \cite[Proposition 6.14]{BSSW}, one has $\mathrm{flatdim}_AX=-\mathrm{inf}(\bar{\kappa}\otimes^\mathrm{L}_AX)$. If $X\in\mathrm{D}^{-}_{\mathrm{f}}(A)$, then $\mathrm{projdim}_AX=\mathrm{flatdim}_AX$ by \cite[Proposition 2.2]{Mi19}.
As $\mathrm{R}\Gamma_{\bar{\mathfrak{m}}}(\bar{\kappa})\simeq\bar{\kappa}$, it follows by \cite[Theorem 4.8]{ya20} that $\mathrm{inf}(\bar{\kappa}\otimes^\mathrm{L}_AX)=\mathrm{inf}\mathrm{R}\Gamma_{\bar{\mathfrak{m}}}(\bar{\kappa}\otimes^\mathrm{L}_AX)=
\mathrm{depth}_A(\bar{\kappa}\otimes^\mathrm{L}_AX)=\mathrm{depth}_AX-\mathrm{depth}A$. By Theorem \ref{lem5.4} and \cite[Theorem 4.8]{ya20}, one has
\begin{center}$\mathrm{lc.dim}_A(X\otimes_A^\mathrm{L}Y)-\mathrm{depth}_A(X\otimes_A^\mathrm{L}Y)
\geq\mathrm{lc.dim}_AY-\mathrm{flatdim}_AX+\mathrm{depth}A-\mathrm{depth}_AX-\mathrm{depth}_AY$.\end{center}
Thus we obtain the desired inequality.
\end{proof}

Following \cite{Shaul},
a local DG-ring $(A,\bar{\mathfrak{m}})$ with $\mathrm{amp}A<\infty$ is called \emph{local Cohen-Macaulay} if $\mathrm{amp}\mathrm{R}\Gamma_{\bar{\mathfrak{m}}}(A)=\mathrm{amp}A$. A DG-module $X\in\mathrm{D}^{\mathrm{b}}_{\mathrm{f}}(A)$ is called \emph{local Cohen-Macaulay} if
\begin{center}$\mathrm{amp}X=\mathrm{amp}A=\mathrm{amp}\mathrm{R}\Gamma_{\bar{\mathfrak{m}}}(X)$.\end{center}

\begin{rem}\label{lem:2.11}{\rm Let $(A,\bar{\mathfrak{m}},\bar{\kappa})$ be a local DG-ring with $\mathrm{amp}A<\infty$.

(1) The next example shows that the condition $\mathrm{depth}_AX<\infty$ or $X\in\mathrm{D}^{-}_{\mathrm{f}}(A)$ can not be dropped in Corollaries \ref{lem1.1'} and \ref{lem1.1}.

Set $(A,\bar{\mathfrak{m}})=(R,\mathfrak{m})$ be a local ring, $\textbf{\emph{x}}=x_1,\cdots,x_n$ an $R$-regular sequence in $\mathfrak{m}$ and $\mathfrak{p}$ a prime ideal such that $\textbf{\emph{x}}\not\in\mathfrak{p}$. Set $X=E(R/\mathfrak{p})$ and $Y=R/\textbf{\emph{x}}$. Then $X\not\in\mathrm{D}^{-}_{\mathrm{f}}(R)$ and $\mathrm{depth}_RX=\infty$. Thus $\mathrm{flatdim}_RY=n<\mathrm{depth}_RX-\mathrm{lc.dim}_R(Y\otimes_R^\mathrm{L}X)$ and $\mathrm{lc.dim}_RX-\mathrm{depth}_RX=-\infty<\mathrm{lc.dim}R-\mathrm{depth}R$.

 (2) Let $X\in\mathrm{D}^-(A)$ with $\mathrm{depth}_AX<\infty$.  One can assume that $\mathrm{flatdim}_AX<\infty$. Then $\mathrm{flatdim}_AX=-\mathrm{depth}_A(\bar{\kappa}\otimes^\mathrm{L}_AX)=\mathrm{depth}A-\mathrm{depth}_AX$, it follows by Corollary \ref{lem1.1} that $\mathrm{flatdim}_AX\geq\mathrm{lc.dim}A-\mathrm{lc.dim}_AX$. If $X\in\mathrm{D}^{-}_{\mathrm{f}}(A)$, then $\mathrm{depth}_AX<\infty$ by \cite[Proposition 3.5]{Shaul}, and so $\mathrm{projdim}_AX\geq\mathrm{lc.dim}A-\mathrm{lc.dim}_AX$.

 (3) Let $0\not\simeq X\in\mathrm{D}^-(A)$ with $\mathrm{flatdim}_AX<\infty$. If $X\simeq\mathrm{R}\Gamma_{\bar{\mathfrak{m}}}(X)$, then $\mathrm{lc.dim}_AX=\mathrm{sup}X$ and $\mathrm{depth}_AX=\mathrm{inf}X$, and hence $\mathrm{amp}X\geq\mathrm{lc.dim}A-\mathrm{depth}A$.

(4) If there is a local Cohen-Macaulay DG-module
$X\in\mathrm{D}^{\mathrm{b}}_{\mathrm{f}}(A)$ with $\mathrm{projdim}_AX<\infty$, then $\mathrm{amp}\mathrm{R}\Gamma_{\bar{\mathfrak{m}}}(X)=\mathrm{lc.dim}_AX-\mathrm{depth}_AX
\geq\mathrm{lc.dim}A-\mathrm{depth}A=\mathrm{amp}\mathrm{R}\Gamma_{\bar{\mathfrak{m}}}(A)$ by Corollary \ref{lem1.1}. As
$\mathrm{amp}\mathrm{R}\Gamma_{\bar{\mathfrak{m}}}(A)\geq\mathrm{amp}A$ by \cite[Theorem 4.1(1)]{Shaul}, it implies that $\mathrm{amp}\mathrm{R}\Gamma_{\bar{\mathfrak{m}}}(A)=\mathrm{amp}A$ and $A$ is local Cohen-Macaulay.

(5) For $0\not\simeq X,Y\in\mathrm{D}^{\mathrm{b}}_{\mathrm{f}}(A)$ with $\mathrm{projdim}_AX<\infty$, if $\mathrm{Supp}_AX\cap\mathrm{Supp}_AY=\{\bar{\mathfrak{m}}\}$, then $\mathrm{lc.dim}_A(X\otimes_A^\mathrm{L}Y)=\mathrm{sup}\{\mathrm{dim}\mathrm{H}^{0}(A)/\bar{\mathfrak{p}}+\mathrm{sup}(X\otimes_A^\mathrm{L}Y)_{\bar{\mathfrak{p}}}
\hspace{0.03cm}|\hspace{0.03cm}\bar{\mathfrak{p}}\in\mathrm{Supp}_AX\cap\mathrm{Supp}_AY\}
=\mathrm{sup}X+\mathrm{sup}Y$. Thus $\mathrm{lc.dim}_AY-\mathrm{sup}Y\leq\mathrm{projdim}_AX+\mathrm{sup}X$ by Theorem \ref{lem5.4}(2).}
\end{rem}

The next result is a generalization of \cite[Theorem 3.7]{Ya} and \cite[Theorems 2.1 and 2.6]{DY1}.

\begin{thm}\label{lem5.4'} Let $(A,\bar{\mathfrak{m}},\bar{\kappa})$ be a local DG-ring with $\mathrm{amp}A<\infty$, and let $X,Y\in \mathrm{D}(A)$ such that $\mathrm{RHom}_A(X,Y)\not\simeq0$.

$(i)$ If $X\in \mathrm{D}^{-}_{\mathrm{f}}(A)$ with $\mathrm{projdim}_AX<\infty$ and $Y\in \mathrm{D}^{\mathrm{b}}(A)$, then there is an inequality \begin{center}$\mathrm{lc.dim}_{A}\mathrm{RHom}_A(X,Y)\leq\mathrm{lc.dim}_{A}Y+\mathrm{projdim}_AX$.\end{center}

 $(ii)$ If $X,Y\in \mathrm{D}^{\mathrm{b}}_{\mathrm{f}}(A)$ with $\mathrm{projdim}_AX<\infty$, then there are inequalities \begin{center}$\mathrm{lc.dim}_{A}Y-\mathrm{sup}X\leq\mathrm{lc.dim}_{A}\mathrm{RHom}_A(X,Y)
 \leq\mathrm{lc.dim}_{A}(X\otimes_A^\mathrm{L}Y)+\mathrm{projdim}_AX-\mathrm{inf}X$.\end{center}

 $(iii)$ If $X,Y\in \mathrm{D}^{\mathrm{b}}_{\mathrm{f}}(A)$ with either $\mathrm{projdim}_AX<\infty$ or $\mathrm{injdim}_AY<\infty$, then
\begin{center}$\mathrm{lc.dim}_{A}\mathrm{RHom}_A(X,Y)\leq\mathrm{lc.dim}A-\mathrm{inf}X+\mathrm{sup}Y$.\end{center}
\end{thm}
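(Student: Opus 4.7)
The plan is to prove all three parts uniformly by localizing at primes $\bar{\mathfrak{p}}\in\mathrm{Spec}\mathrm{H}^0(A)$ and exploiting the identity $\mathrm{lc.dim}_AZ=\mathrm{sup}\{\mathrm{dim}\mathrm{H}^0(A)/\bar{\mathfrak{p}}+\mathrm{sup}Z_{\bar{\mathfrak{p}}}\,|\,\bar{\mathfrak{p}}\in\mathrm{Spec}\mathrm{H}^0(A)\}$ stated just before Lemma \ref{lem0.4}. Under the projective or injective dimension hypothesis $\mathrm{RHom}$ commutes with localization, yielding $\mathrm{RHom}_A(X,Y)_{\bar{\mathfrak{p}}}\simeq\mathrm{RHom}_{A_{\bar{\mathfrak{p}}}}(X_{\bar{\mathfrak{p}}},Y_{\bar{\mathfrak{p}}})$, so the whole argument reduces to sharp estimates of $\mathrm{sup}\mathrm{RHom}_{A_{\bar{\mathfrak{p}}}}(X_{\bar{\mathfrak{p}}},Y_{\bar{\mathfrak{p}}})$ at the local DG-ring $A_{\bar{\mathfrak{p}}}$, added to $\mathrm{dim}\mathrm{H}^0(A)/\bar{\mathfrak{p}}$ and then supremized over $\bar{\mathfrak{p}}$.

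For (i), the defining inequality of projective dimension at $A_{\bar{\mathfrak{p}}}$ immediately gives $\mathrm{sup}\mathrm{RHom}_{A_{\bar{\mathfrak{p}}}}(X_{\bar{\mathfrak{p}}},Y_{\bar{\mathfrak{p}}})\leq\mathrm{projdim}_{A_{\bar{\mathfrak{p}}}}X_{\bar{\mathfrak{p}}}+\mathrm{sup}Y_{\bar{\mathfrak{p}}}\leq\mathrm{projdim}_AX+\mathrm{sup}Y_{\bar{\mathfrak{p}}}$, which sums to the desired inequality. The upper bound in (ii) follows from the same estimate refined via Lemma \ref{lem0.2}(3) at $A_{\bar{\mathfrak{p}}}$, giving $\mathrm{sup}(X\otimes_A^\mathrm{L}Y)_{\bar{\mathfrak{p}}}=\mathrm{sup}X_{\bar{\mathfrak{p}}}+\mathrm{sup}Y_{\bar{\mathfrak{p}}}$ whenever both localizations are nonzero, so that $\mathrm{sup}Y_{\bar{\mathfrak{p}}}\leq\mathrm{sup}(X\otimes_A^\mathrm{L}Y)_{\bar{\mathfrak{p}}}-\mathrm{inf}X$.

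The lower bound of (ii) and part (iii) both rest on the key equality $\mathrm{sup}\mathrm{RHom}_{A_{\bar{\mathfrak{p}}}}(X_{\bar{\mathfrak{p}}},Y_{\bar{\mathfrak{p}}})=\mathrm{sup}Y_{\bar{\mathfrak{p}}}+\mathrm{projdim}_{A_{\bar{\mathfrak{p}}}}X_{\bar{\mathfrak{p}}}$ at each local DG-ring $A_{\bar{\mathfrak{p}}}$ where both localizations are nonzero. I would derive this by tensoring with the residue field $\bar{\kappa}_{\bar{\mathfrak{p}}}$: Lemma \ref{lem0.15}(2) (second branch) yields $\mathrm{RHom}_{A_{\bar{\mathfrak{p}}}}(X_{\bar{\mathfrak{p}}},Y_{\bar{\mathfrak{p}}})\otimes_{A_{\bar{\mathfrak{p}}}}^\mathrm{L}\bar{\kappa}_{\bar{\mathfrak{p}}}\simeq\mathrm{RHom}_{\bar{\kappa}_{\bar{\mathfrak{p}}}}(X_{\bar{\mathfrak{p}}}\otimes_{A_{\bar{\mathfrak{p}}}}^\mathrm{L}\bar{\kappa}_{\bar{\mathfrak{p}}},Y_{\bar{\mathfrak{p}}}\otimes_{A_{\bar{\mathfrak{p}}}}^\mathrm{L}\bar{\kappa}_{\bar{\mathfrak{p}}})$ via tensor--hom adjunction, and Lemma \ref{lem0.2}(3) shows $\mathrm{sup}$ is preserved on tensoring with $\bar{\kappa}_{\bar{\mathfrak{p}}}$; the right-hand side computes directly over the field using the identification $\mathrm{inf}(X_{\bar{\mathfrak{p}}}\otimes^\mathrm{L}\bar{\kappa}_{\bar{\mathfrak{p}}})=-\mathrm{projdim}_{A_{\bar{\mathfrak{p}}}}X_{\bar{\mathfrak{p}}}$. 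Because $\mathrm{projdim}_{A_{\bar{\mathfrak{p}}}}X_{\bar{\mathfrak{p}}}\geq-\mathrm{sup}X_{\bar{\mathfrak{p}}}\geq-\mathrm{sup}X$, evaluating at a prime that realizes $\mathrm{lc.dim}_AY$ produces the lower bound in (ii).

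For (iii), I combine this equality with the DG Auslander--Buchsbaum formula from \cite{ya20} to bound $\mathrm{projdim}_{A_{\bar{\mathfrak{p}}}}X_{\bar{\mathfrak{p}}}=\mathrm{depth}A_{\bar{\mathfrak{p}}}-\mathrm{depth}_{A_{\bar{\mathfrak{p}}}}X_{\bar{\mathfrak{p}}}\leq\mathrm{depth}A_{\bar{\mathfrak{p}}}-\mathrm{inf}X$, together with the standard height--depth inequality $\mathrm{depth}A_{\bar{\mathfrak{p}}}+\mathrm{dim}\mathrm{H}^0(A)/\bar{\mathfrak{p}}\leq\mathrm{dim}\mathrm{H}^0(A)=\mathrm{lc.dim}A$; summing yields $\mathrm{dim}\mathrm{H}^0(A)/\bar{\mathfrak{p}}+\mathrm{sup}\mathrm{RHom}_A(X,Y)_{\bar{\mathfrak{p}}}\leq\mathrm{lc.dim}A+\mathrm{sup}Y-\mathrm{inf}X$. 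The injective-dimension alternative is parallel, using $\mathrm{sup}\mathrm{RHom}_{A_{\bar{\mathfrak{p}}}}(X_{\bar{\mathfrak{p}}},Y_{\bar{\mathfrak{p}}})\leq\mathrm{injdim}_{A_{\bar{\mathfrak{p}}}}Y_{\bar{\mathfrak{p}}}-\mathrm{inf}X_{\bar{\mathfrak{p}}}$ together with the DG Bass formula $\mathrm{injdim}_{A_{\bar{\mathfrak{p}}}}Y_{\bar{\mathfrak{p}}}=\mathrm{depth}A_{\bar{\mathfrak{p}}}+\mathrm{sup}Y_{\bar{\mathfrak{p}}}$ in place of Auslander--Buchsbaum. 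The main obstacles I expect will be (a) cleanly handling the case in (ii) where the prime realizing $\mathrm{lc.dim}_AY$ does not lie in $\mathrm{Supp}_AX$, and (b) invoking the DG Auslander--Buchsbaum and Bass formulae at arbitrary localizations $A_{\bar{\mathfrak{p}}}$, together with the descent of finite injective dimension to $A_{\bar{\mathfrak{p}}}$, both of which require some care in the DG setting.
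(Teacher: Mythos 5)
Parts (i), the upper bound in (ii), and part (iii) of your proposal are essentially the paper's argument: localize at a prime realizing $\mathrm{lc.dim}_A\mathrm{RHom}_A(X,Y)$, identify $\mathrm{RHom}_A(X,Y)_{\bar{\mathfrak{p}}}$ via Lemma \ref{lem0.15}(2), estimate $\mathrm{sup}$ using the local projective (or injective) dimension, and in (iii) feed the result through the DG Auslander--Buchsbaum or Bass formula plus the bound $\dim\mathrm{H}^0(A)/\bar{\mathfrak{p}}+\dim\mathrm{H}^0(A)_{\bar{\mathfrak{p}}}+\inf A_{\bar{\mathfrak{p}}}\leq\dim\mathrm{H}^0(A)$. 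Your key local equality $\mathrm{sup}\,\mathrm{RHom}_{A_{\bar{\mathfrak{p}}}}(X_{\bar{\mathfrak{p}}},Y_{\bar{\mathfrak{p}}})=\mathrm{sup}\,Y_{\bar{\mathfrak{p}}}+\mathrm{projdim}_{A_{\bar{\mathfrak{p}}}}X_{\bar{\mathfrak{p}}}$, obtained by applying $-\otimes^\mathrm{L}\bar{\kappa}_{\bar{\mathfrak{p}}}$ and Lemma \ref{lem0.2}(3), is correct and is the essential content behind these three estimates.

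The gap is in the lower bound of (ii), and your ``obstacle (a)'' is not a technicality you can clean up --- it is fatal to a pure-localization argument. If the prime $\bar{\mathfrak{p}}$ realizing $\mathrm{lc.dim}_AY$ lies outside $\mathrm{Supp}_AX$, then $\mathrm{RHom}_A(X,Y)_{\bar{\mathfrak{p}}}\simeq0$ and the local equality gives nothing at $\bar{\mathfrak{p}}$; there is no elementary reason that some other prime in $\mathrm{Supp}_AX\cap\mathrm{Supp}_AY$ must carry the required lower bound. Indeed this lower bound is a form of the Intersection Theorem itself, so it cannot be extracted from prime-by-prime estimates alone. The paper's proof instead invokes Theorem \ref{lem5.4}(2) (already established) with $\mathrm{RHom}_A(X,A)$ in place of $X$: $\mathrm{lc.dim}_AY\leq\mathrm{lc.dim}_A(\mathrm{RHom}_A(X,A)\otimes_A^\mathrm{L}Y)+\mathrm{projdim}_A\mathrm{RHom}_A(X,A)$, then identifies $\mathrm{RHom}_A(X,A)\otimes_A^\mathrm{L}Y\simeq\mathrm{RHom}_A(X,Y)$ via Lemma \ref{lem0.15}(2) and $\mathrm{projdim}_A\mathrm{RHom}_A(X,A)=\mathrm{sup}\,\mathrm{RHom}_A(\mathrm{RHom}_A(X,A),A)=\mathrm{sup}X$ via \cite[Proposition 4.4(3)]{ya20} and biduality over $A$. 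To make your proposal correct, you need to replace the localization argument for this one inequality with an appeal to Theorem \ref{lem5.4}(2).
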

\begin{proof} (i) By \cite[Remark 2.3]{Shaul}, $\mathrm{lc.dim}_{A}\mathrm{RHom}_A(X,Y)\leq\mathrm{dim}\mathrm{H}^{0}(A)+\mathrm{sup}\mathrm{RHom}_A(X,Y)<\infty$.
 One can choose $\bar{\mathfrak{p}}\in\mathrm{Supp}_AX\cap\mathrm{Supp}_AY$ such that \begin{center}$\mathrm{lc.dim}_{A}\mathrm{RHom}_A(X,Y)
=\mathrm{dim}\mathrm{H}^{0}(A)/\bar{\mathfrak{p}}+\mathrm{sup}\mathrm{RHom}_A(X,Y)_{\bar{\mathfrak{p}}}$.\end{center} By Lemmas \ref{lem0.11}(1) and \ref{lem0.15}(2), we have
$\mathrm{RHom}_A(X,Y)_{\bar{\mathfrak{p}}}\simeq\mathrm{RHom}_{A_{\bar{\mathfrak{p}}}}(X_{\bar{\mathfrak{p}}},A_{\bar{\mathfrak{p}}})
\otimes^\mathrm{L}_{A_{\bar{\mathfrak{p}}}}Y_{\bar{\mathfrak{p}}}$
and $\mathrm{sup}\mathrm{RHom}_{A_{\bar{\mathfrak{p}}}}(X_{\bar{\mathfrak{p}}},A_{\bar{\mathfrak{p}}})
\leq\mathrm{sup}\mathrm{RHom}_{A}(X,A)=\mathrm{projdim}_AX$.
Hence
 \begin{center}$\begin{aligned}\mathrm{dim}\mathrm{H}^{0}(A)/\bar{\mathfrak{p}}+\mathrm{sup}\mathrm{RHom}_A(X,Y)_{\bar{\mathfrak{p}}}
&\leq\mathrm{dim}\mathrm{H}^{0}(A)/\bar{\mathfrak{p}}+\mathrm{sup}Y_{\bar{\mathfrak{p}}}+\mathrm{sup}\mathrm{RHom}_{A_{\bar{\mathfrak{p}}}}(X_{\bar{\mathfrak{p}}},A_{\bar{\mathfrak{p}}})\\
&\leq\mathrm{lc.dim}_AY+\mathrm{projdim}_AX.\end{aligned}$\end{center}

(ii) By the proof of (i), one has $\mathrm{lc.dim}_{A}\mathrm{RHom}_A(X,Y)\leq\mathrm{dim}\mathrm{H}^{0}(A)/\bar{\mathfrak{p}}+\mathrm{sup}Y_{\bar{\mathfrak{p}}}+\mathrm{projdim}_AX$ for some $\bar{\mathfrak{p}}\in\mathrm{Supp}_AX\cap\mathrm{Supp}_AY$. By Lemma \ref{lem0.2}(3), we have
\begin{center}$\begin{aligned}\mathrm{lc.dim}_{A}\mathrm{RHom}_A(X,Y)
&\leq\mathrm{dim}\mathrm{H}^{0}(A)/\bar{\mathfrak{p}}+\mathrm{sup}(X\otimes^\mathrm{L}_AY)_{\bar{\mathfrak{p}}}-\mathrm{sup}X_{\bar{\mathfrak{p}}}+\mathrm{projdim}_AX\\
&\leq\mathrm{lc.dim}_A(X\otimes^\mathrm{L}_AY)+\mathrm{projdim}_AX-\mathrm{inf}X.\end{aligned}$\end{center} For the left-hand side of (ii), as $\mathrm{RHom}_A(X,A)\in\mathrm{D}^{\mathrm{b}}_{\mathrm{f}}(A)$ and $\mathrm{projdim}_A\mathrm{RHom}_A(X,A)<\infty$, it follows by Theorem \ref{lem5.4}(2) that
\begin{center}$\begin{aligned}\mathrm{lc.dim}_AY
&\leq\mathrm{lc.dim}_{A}(\mathrm{RHom}_A(X,A)\otimes_A^\mathrm{L}Y)+\mathrm{projdim}_A\mathrm{RHom}_A(X,A)\\
&=\mathrm{lc.dim}_{A}\mathrm{RHom}_A(X,Y)+\mathrm{sup}\mathrm{RHom}_A(\mathrm{RHom}_A(X,A),A)\\
&=\mathrm{lc.dim}_{A}\mathrm{RHom}_A(X,Y)+\mathrm{sup}X,\end{aligned}$\end{center}
where the first equality is by \cite[Proposition 4.4(3)]{ya20} and the second one follows from the isomorphism $X\simeq\mathrm{RHom}_A(\mathrm{RHom}_A(X,A),A)$, as claimed.

(iii) Fix a point $\bar{\mathfrak{p}}\in\mathrm{Spec}\mathrm{H}^0(A)$.
Assume that $\mathrm{projdim}_AX<\infty$.
Then the following computations hold:
\begin{center}$\begin{aligned}\mathrm{dim}\mathrm{H}^{0}(A)/\bar{\mathfrak{p}}+\mathrm{sup}\mathrm{RHom}_{A_{\bar{\mathfrak{p}}}}(X_{\bar{\mathfrak{p}}},Y_{\bar{\mathfrak{p}}})
&\leq\mathrm{dim}\mathrm{H}^{0}(A)/\bar{\mathfrak{p}}+\mathrm{projdim}_{A_{\bar{\mathfrak{p}}}}X_{\bar{\mathfrak{p}}}+\mathrm{sup}Y_{\bar{\mathfrak{p}}}\\
&=\mathrm{dim}\mathrm{H}^{0}(A)/\bar{\mathfrak{p}}+\mathrm{depth}A_{\bar{\mathfrak{p}}}-\mathrm{depth}_{A_{\bar{\mathfrak{p}}}}X_{\bar{\mathfrak{p}}}+\mathrm{sup}Y_{\bar{\mathfrak{p}}}\\
&\leq\mathrm{dim}\mathrm{H}^{0}(A)/\bar{\mathfrak{p}}+\mathrm{dim}\mathrm{H}^0(A)_{\bar{\mathfrak{p}}}+\mathrm{inf}A_{\bar{\mathfrak{p}}}-\mathrm{depth}_{A_{\bar{\mathfrak{p}}}}X_{\bar{\mathfrak{p}}}+\mathrm{sup}Y_{\bar{\mathfrak{p}}}\\
&\leq\mathrm{dim}\mathrm{H}^{0}(A)-\mathrm{inf}X_{\bar{\mathfrak{p}}}+\mathrm{sup}Y_{\bar{\mathfrak{p}}}\\
&\leq\mathrm{lc.dim}A-\mathrm{inf}X+\mathrm{sup}Y,\end{aligned}$\end{center}
where the equality is by \cite[Theorem 4.5]{ya20}, the second inequality is by \cite[Corollary 3.6]{Shaul} and the third one is by \cite[Proposition 3.3]{Shaul}.

Assume that $\mathrm{injdim}_AY<\infty$.
Then the next computations hold:
\begin{center}$\begin{aligned}\mathrm{dim}\mathrm{H}^{0}(A)/\bar{\mathfrak{p}}+\mathrm{sup}\mathrm{RHom}_{A_{\bar{\mathfrak{p}}}}(X_{\bar{\mathfrak{p}}},Y_{\bar{\mathfrak{p}}})
&\leq\mathrm{dim}\mathrm{H}^{0}(A)/\bar{\mathfrak{p}}+\mathrm{injdim}_{A_{\bar{\mathfrak{p}}}}Y_{\bar{\mathfrak{p}}}-\mathrm{inf}X_{\bar{\mathfrak{p}}}\\
&=\mathrm{dim}\mathrm{H}^{0}(A)/\bar{\mathfrak{p}}+\mathrm{depth}A_{\bar{\mathfrak{p}}}+\mathrm{sup}Y_{\bar{\mathfrak{p}}}-\mathrm{inf}X_{\bar{\mathfrak{p}}}\\
&\leq\mathrm{dim}\mathrm{H}^{0}(A)/\bar{\mathfrak{p}}+\mathrm{dim}\mathrm{H}^0(A)_{\bar{\mathfrak{p}}}+\mathrm{inf}A_{\bar{\mathfrak{p}}}+\mathrm{sup}Y_{\bar{\mathfrak{p}}}-\mathrm{inf}X_{\bar{\mathfrak{p}}}\\
&\leq\mathrm{dim}\mathrm{H}^{0}(A)+\mathrm{sup}Y_{\bar{\mathfrak{p}}}-\mathrm{inf}X_{\bar{\mathfrak{p}}}\\
&\leq\mathrm{lc.dim}A+\mathrm{sup}Y-\mathrm{inf}X,\end{aligned}$\end{center}
where the equality is by \cite[Theorem 4.7]{ya20}.

Therefore, we obtain the desired inequality.
\end{proof}

Following \cite{Shaul},
a local DG-ring $(A,\bar{\mathfrak{m}},\bar{\kappa})$ with $\mathrm{amp}A<\infty$ is called \emph{Gorenstein} if $\mathrm{injdim}_AA<\infty$.
The following characterization of Gorenstein DG-rings is a DG-version of Auslander-Bridger characterization of classical Gorenstein rings.

\begin{cor}\label{lem5.2} Let $(A,\bar{\mathfrak{m}},\bar{\kappa})$ be a local DG-ring with $\mathrm{amp}A<\infty$.
 Then the following are equivalent:

 $(1)$ $A$ is local Gorenstein;

 $(2)$ $\mathrm{lc.dim}_{A}\mathrm{RHom}_A(X,A)<\infty$ for all $X\in\mathrm{D}^{\mathrm{b}}_{\mathrm{f}}(A)$;

 $(3)$ $\mathrm{lc.dim}_{A}\mathrm{RHom}_A(T,A)<\infty$ for all finitely generated $\mathrm{H}^{0}(A)$-modules $T$;

 $(4)$ $\mathrm{lc.dim}_{A}\mathrm{RHom}_A(\bar{\kappa},A)<\infty$.
\end{cor}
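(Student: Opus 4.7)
The plan is the cycle $(1)\Rightarrow(2)\Rightarrow(3)\Rightarrow(4)\Rightarrow(1)$. Two of the four links are free: $(2)\Rightarrow(3)$ holds because every finitely generated $\mathrm{H}^0(A)$-module $T$ defines an object of $\mathrm{D}^{\mathrm{b}}_{\mathrm{f}}(A)$ via restriction of scalars along $\pi_A:A\rightarrow\mathrm{H}^0(A)$, and $(3)\Rightarrow(4)$ holds because $\bar{\kappa}=\mathrm{H}^0(A)/\bar{\mathfrak{m}}$ is itself a finitely generated $\mathrm{H}^0(A)$-module. So the real content sits in $(1)\Rightarrow(2)$ and $(4)\Rightarrow(1)$.

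For $(1)\Rightarrow(2)$ I would directly invoke Theorem \ref{lem5.4'}(iii) with $Y=A$. Since $\mathrm{injdim}_AA<\infty$ by hypothesis, for every $X\in\mathrm{D}^{\mathrm{b}}_{\mathrm{f}}(A)$ with $\mathrm{RHom}_A(X,A)\not\simeq0$ the theorem yields
$$\mathrm{lc.dim}_{A}\mathrm{RHom}_A(X,A)\leq\mathrm{lc.dim}A-\mathrm{inf}X+\mathrm{sup}A,$$
which is finite since $X\in\mathrm{D}^{\mathrm{b}}_{\mathrm{f}}(A)$ and $\mathrm{amp}A<\infty$; the case $\mathrm{RHom}_A(X,A)\simeq0$ is trivial.

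The one genuine argument is $(4)\Rightarrow(1)$, which I would prove by chaining two identifications. First, each cohomology module $\mathrm{H}^\ell\mathrm{RHom}_A(\bar{\kappa},A)$ is annihilated by $\bar{\mathfrak{m}}$ and hence is a $\bar{\kappa}$-vector space; as an $\mathrm{H}^0(A)$-module it therefore has Krull dimension $0$ whenever nonzero. Feeding this observation into the defining formula
$$\mathrm{lc.dim}_AX=\mathrm{sup}\{\mathrm{dim}_{\mathrm{H}^0(A)}\mathrm{H}^\ell(X)+\ell\hspace{0.03cm}|\hspace{0.03cm}\ell\in\mathbb{Z}\}$$
collapses it to
$$\mathrm{lc.dim}_{A}\mathrm{RHom}_A(\bar{\kappa},A)=\mathrm{sup}\mathrm{RHom}_A(\bar{\kappa},A).$$
Second, Minamoto's characterization of injective dimension over a local DG-ring, the same input used in Corollary \ref{lem0.8}, gives $\mathrm{injdim}_AA=\mathrm{sup}\mathrm{RHom}_A(\bar{\kappa},A)$. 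Combining the two equalities produces $\mathrm{injdim}_AA=\mathrm{lc.dim}_{A}\mathrm{RHom}_A(\bar{\kappa},A)<\infty$, as required. I do not anticipate a serious obstacle: the whole proof hinges on recognizing that the invariants $\mathrm{lc.dim}_A$ and $\mathrm{sup}$ coincide on $\mathrm{RHom}_A(\bar{\kappa},A)$ because its support is contained in $\{\bar{\mathfrak{m}}\}$, so the characterization reduces immediately to Minamoto's formula.
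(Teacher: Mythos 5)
Your proof is correct and follows essentially the same route as the paper: the cycle $(1)\Rightarrow(2)\Rightarrow(3)\Rightarrow(4)\Rightarrow(1)$, with $(1)\Rightarrow(2)$ by Theorem~\ref{lem5.4'}(iii) and $(4)\Rightarrow(1)$ by observing that the cohomology of $\mathrm{RHom}_A(\bar{\kappa},A)$ is supported only at $\bar{\mathfrak{m}}$, so $\mathrm{lc.dim}_A\mathrm{RHom}_A(\bar{\kappa},A)=\mathrm{sup}\,\mathrm{RHom}_A(\bar{\kappa},A)$, and then invoking Minamoto's formula for $\mathrm{injdim}_AA$. The paper phrases the collapse via localizing at $\bar{\mathfrak{m}}$ while you phrase it via the dimension-zero observation on each $\mathrm{H}^\ell$, but these are two descriptions of the same fact.
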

\begin{proof} (1) $\Rightarrow$ (2) follows from Theorem \ref{lem5.4'}(iii) and (2) $\Rightarrow$ (3) $\Rightarrow$ (4) are trivial.

(4) $\Rightarrow$ (1) One has the following equalities \begin{center}$\mathrm{sup}\mathrm{RHom}_A(\bar{\kappa},A)=
\mathrm{sup}\mathrm{RHom}_A(\bar{\kappa},A)_{\bar{\mathfrak{m}}}=\mathrm{lc.dim}_{A}\mathrm{RHom}_A(\bar{\kappa},A)<\infty$.\end{center} Hence \cite[Corollary 2.31]{Mi19} implies that $\mathrm{injdim}_AA<\infty$ and $A$ is local Gorenstein.
\end{proof}

Let $(A,\bar{\mathfrak{m}},\bar{\kappa})$ be a local DG-ring and $X\in\mathrm{D}^+(A)$. An element $\bar{x}\in\bar{\mathfrak{m}}$ is called \emph{$X$-regular} if it is
$\mathrm{H}^{\mathrm{inf}X}(X)$-regular, that is, if the multiplication map
\begin{center}$\bar{x}:\mathrm{H}^{\mathrm{inf}X}(X)\rightarrow\mathrm{H}^{\mathrm{inf}X}(X)$\end{center}is injective. Inductively, we say that a sequence $\bar{x}_1,\cdots,\bar{x}_n\in\bar{\mathfrak{m}}$ is \emph{$X$-regular} if
$\bar{x}_1$ is $X$-regular and the sequence $\bar{x}_2,\cdots,\bar{x}_n$ is $X/\hspace{-0.15cm}/\bar{x}_1X$-regular. Here, $X/\hspace{-0.15cm}/\bar{x}_1X$ is the
cone of the map $\bar{x}_1:X\rightarrow X$ in $\mathrm{D}(A)$. Following \cite{sh21}, a local DG-ring $(A,\bar{\mathfrak{m}},\bar{\kappa})$ with $\mathrm{amp}A<\infty$ is called  \emph{sequence-regular} if $\bar{\mathfrak{m}}$ is generated by an
$A$-regular sequence. If $A$ is a regular local ring and $M,N$ two finitely generated $A$-modules, then there is an inequality \begin{center}$\mathrm{dim}_{A}M+\mathrm{dim}_{A}N\leq\mathrm{dim}A+\mathrm{dim}_{A}(M\otimes_AN)$,\end{center} which is due to Serre and is called strong Intersection Theorem (see \cite[20.4]{F}). We next provide the DG-version of this theorem.

\begin{prop}\label{lem5.7} Let $(A,\bar{\mathfrak{m}},\bar{\kappa})$ be a sequence-regular DG-ring and $X,Y\in\mathrm{D}^{\mathrm{b}}_{\mathrm{f}}(A)$. Then there exists an inequality
\begin{center}$\mathrm{lc.dim}_{A}X+\mathrm{lc.dim}_{A}Y\leq\mathrm{lc.dim}A+\mathrm{lc.dim}_{A}(X\otimes_A^\mathrm{L}Y)$.\end{center}
\end{prop}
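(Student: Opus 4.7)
The plan is to reduce the DG inequality to the classical Serre intersection inequality for primes in an ordinary regular local ring, applied to $R:=\mathrm{H}^{0}(A)$. First verify that $R$ is regular local: since $A$ is sequence-regular, pick an $A$-regular sequence $\bar{x}_1,\ldots,\bar{x}_n$ generating $\bar{\mathfrak{m}}$. By the DG definition, each $\bar{x}_i$ acts regularly on the top cohomology at every stage, which unpacks (since $A$ is non-positive, $\sup A=0$) into the statement that $\bar{x}_1,\ldots,\bar{x}_n$ is an $R$-regular sequence of length $n$ generating $\bar{\mathfrak{m}}$. Hence $\mathrm{depth}\,R\geq n$ while $\bar{\mathfrak{m}}$ is generated by $n$ elements, so $R$ is regular of dimension $n$ (this is also recorded in \cite{sh21}).

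Next, employ the pointwise formula $\mathrm{lc.dim}_AZ=\sup\{\mathrm{dim}R/\bar{\mathfrak{p}}+\sup Z_{\bar{\mathfrak{p}}}\hspace{0.03cm}|\hspace{0.03cm}\bar{\mathfrak{p}}\in\mathrm{Spec}R\}$ recorded just before Lemma \ref{lem0.4}. Because $X,Y\in\mathrm{D}^{\mathrm{b}}_{\mathrm{f}}(A)$, these suprema are attained at minimal primes of appropriate cohomology modules: choose $\bar{\mathfrak{p}}\in\mathrm{Supp}_AX$ and $\bar{\mathfrak{q}}\in\mathrm{Supp}_AY$ with
\begin{center}$\mathrm{lc.dim}_AX=\mathrm{dim}R/\bar{\mathfrak{p}}+\sup X_{\bar{\mathfrak{p}}}$\quad and\quad $\mathrm{lc.dim}_AY=\mathrm{dim}R/\bar{\mathfrak{q}}+\sup Y_{\bar{\mathfrak{q}}}$.\end{center}
Take a minimal prime $\bar{\mathfrak{r}}$ over $\bar{\mathfrak{p}}+\bar{\mathfrak{q}}$. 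Serre's classical intersection inequality for primes in the regular local ring $R$ (a consequence of $R_{\bar{\mathfrak{r}}}$ being regular together with catenarity) yields
\begin{center}$\mathrm{dim}R/\bar{\mathfrak{p}}+\mathrm{dim}R/\bar{\mathfrak{q}}\leq\mathrm{dim}R+\mathrm{dim}R/\bar{\mathfrak{r}}$.\end{center}

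Now, since $\bar{\mathfrak{p}},\bar{\mathfrak{q}}\subseteq\bar{\mathfrak{r}}$ and the supports $\mathrm{Supp}_AX$, $\mathrm{Supp}_AY$ are closed under specialization (as unions of supports of finitely generated cohomology modules), $\bar{\mathfrak{r}}$ lies in $\mathrm{Supp}_AX\cap\mathrm{Supp}_AY$, and because $X_{\bar{\mathfrak{p}}}$ is a further localization of $X_{\bar{\mathfrak{r}}}$ one has $\sup X_{\bar{\mathfrak{r}}}\geq\sup X_{\bar{\mathfrak{p}}}$, and similarly for $Y$. Apply Lemma \ref{lem0.2}(3) to the local DG-ring $A_{\bar{\mathfrak{r}}}$ and the DG-modules $X_{\bar{\mathfrak{r}}},Y_{\bar{\mathfrak{r}}}\in\mathrm{D}^{-}_{\mathrm{f}}(A_{\bar{\mathfrak{r}}})$ to conclude $\sup(X\otimes_A^\mathrm{L}Y)_{\bar{\mathfrak{r}}}=\sup X_{\bar{\mathfrak{r}}}+\sup Y_{\bar{\mathfrak{r}}}$. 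Chaining these inequalities,
\begin{center}$\mathrm{lc.dim}_A(X\otimes_A^\mathrm{L}Y)\geq\mathrm{dim}R/\bar{\mathfrak{r}}+\sup X_{\bar{\mathfrak{p}}}+\sup Y_{\bar{\mathfrak{q}}}\geq\mathrm{lc.dim}_AX+\mathrm{lc.dim}_AY-\mathrm{dim}R$.\end{center}
Finally, $\mathrm{dim}R\leq\mathrm{lc.dim}A$ by evaluating the pointwise formula at the zero ideal of the domain $R$, which completes the argument.

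The main obstacle is the first step: unpacking the DG definition of an $A$-regular sequence to produce an honest regular sequence on $\mathrm{H}^0(A)$ and thereby establish the regularity of $R$; this structural fact is carried out in \cite{sh21}. Once regularity of $R$ is in hand, the remainder is a direct marriage of classical Serre for primes with the specialization/localization behaviour of supports and Lemma \ref{lem0.2}(3).
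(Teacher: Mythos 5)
Your proof is correct, and it reaches the same classical ingredient as the paper (Serre's intersection inequality over the regular local ring $\mathrm{H}^0(A)$) but via a genuinely different reduction. The paper's proof decomposes along cohomological degree: it applies Lemma~\ref{lem0.4} twice to write
$\mathrm{lc.dim}_A(X\otimes_A^\mathrm{L}Y)=\sup_{s,t}\{\mathrm{lc.dim}_A(\mathrm{H}^s(X)\otimes_A^\mathrm{L}\mathrm{H}^t(Y))+s+t\}$,
identifies each summand with $\dim_{\mathrm{H}^0(A)}(\mathrm{H}^s(X)\otimes_{\mathrm{H}^0(A)}\mathrm{H}^t(Y))$ by the pointwise formula, and then invokes Serre's inequality for \emph{modules} over the regular ring $\mathrm{H}^0(A)$. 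You instead decompose along primes using the pointwise formula
$\mathrm{lc.dim}_AZ=\sup_{\bar{\mathfrak{p}}}\{\dim\mathrm{H}^0(A)/\bar{\mathfrak{p}}+\sup Z_{\bar{\mathfrak{p}}}\}$,
pick attaining primes $\bar{\mathfrak{p}},\bar{\mathfrak{q}}$, pass to a minimal prime $\bar{\mathfrak{r}}$ of $\bar{\mathfrak{p}}+\bar{\mathfrak{q}}$, invoke Serre's inequality for \emph{primes}, and use Lemma~\ref{lem0.2}(3) at the localization $A_{\bar{\mathfrak{r}}}$ to get the additivity $\sup(X\otimes^\mathrm{L}_AY)_{\bar{\mathfrak{r}}}=\sup X_{\bar{\mathfrak{r}}}+\sup Y_{\bar{\mathfrak{r}}}$. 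The two routes are dual in flavour: the paper linearizes the DG-modules into $\mathrm{H}^0(A)$-modules and leaves the ring alone, while you leave the DG-modules intact and localize the base. What your approach buys is a direct argument at the level of the prime spectrum that avoids the slightly opaque step of identifying $\mathrm{lc.dim}_A(M\otimes_A^\mathrm{L}N)$ with $\dim_{\mathrm{H}^0(A)}(M\otimes_{\mathrm{H}^0(A)}N)$ for modules $M,N$; the price is that you must invoke the additivity of $\sup$ under derived tensor after localizing, and you rely on Serre's theorem in its sharper prime form (every minimal prime of $\bar{\mathfrak{p}}+\bar{\mathfrak{q}}$), though in fact selecting the minimal prime of largest coheight would let you fall back on the module version. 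Both proofs implicitly treat only the nonzero case, which is harmless by the $\sup\emptyset=-\infty$ convention.
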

\begin{proof} One has the following (in)equalities
\begin{center}$\begin{aligned}\mathrm{lc.dim}_{A}(X\otimes_A^\mathrm{L}Y)
&=\mathrm{sup}_{s,t\in\mathbb{Z}}\{\mathrm{lc.dim}_{A}(\mathrm{H}^s(X)\otimes^\mathrm{L}_{A}\mathrm{H}^t(Y))+s+t\}\\
&=\mathrm{sup}_{s,t\in\mathbb{Z}}\{\mathrm{dim}_{\mathrm{H}^{0}(A)}(\mathrm{H}^s(X)\otimes_{\mathrm{H}^{0}(A)}\mathrm{H}^t(Y))+s+t\}\\
&\geq\mathrm{sup}_{s,t\in\mathbb{Z}}\{\mathrm{dim}_{\mathrm{H}^{0}(A)}\mathrm{H}^s(X)+s+
\mathrm{dim}_{\mathrm{H}^{0}(A)}\mathrm{H}^t(Y)+t\}-\mathrm{dim}\mathrm{H}^{0}(A)\\
&=\mathrm{lc.dim}_{A}X+\mathrm{lc.dim}_{A}Y-\mathrm{lc.dim}A,\end{aligned}$\end{center}
where the first equality is by Lemma \ref{lem0.4}, the second one is by the equality of the above of Lemma \ref{lem0.4}, the inequality is by \cite[20.4]{F} as $\mathrm{H}^{0}(A)$ is regular by \cite[Theorem 3.4]{sh21}.
\end{proof}

\section{\bf Applications}
In this section, we explore some implications of our previous results. More precisely, the amplitude inequality of DG-modules is improved; the DG-version of NIT, Krull's principle ideal theorem, Bass Conjecture about Cohen-Macaulay rings and Vasconcelos Conjecture about Gorenstein rings are proved; the Minamoto's conjecture is solved completely.

The next proposition improves the J${\o}$rgensen's results, that is to say, eliminate the hypothesis on local DG-rings in \cite[Theorem 3.1]{J}.

\begin{prop}\label{lem5.10} Let $(A,\bar{\mathfrak{m}},\bar{\kappa})$ be a local DG-ring with $\mathrm{amp}A<\infty$ and $0\not\simeq X,Y\in\mathrm{D}^{\mathrm{b}}_{\mathrm{f}}(A)$ with $\mathrm{projdim}_AX<\infty$. One has an inequality \begin{center}$\mathrm{amp}(X\otimes^\mathrm{L}_AY)\geq\mathrm{amp}Y$.\end{center}In particular, we have $\mathrm{amp}X\geq\mathrm{amp}A$.
\end{prop}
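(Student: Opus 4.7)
The ``in particular'' statement $\mathrm{amp}X\geq\mathrm{amp}A$ is obtained by specialising the main inequality to $Y=A$, since $X\otimes^{\mathrm{L}}_A A\simeq X$. So the real work is to establish the first inequality.

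The first step is to pin down $\mathrm{sup}(X\otimes^{\mathrm{L}}_A Y)$ exactly. Since $X,Y\in\mathrm{D}^{-}_{\mathrm{f}}(A)$ and $A$ is local, Lemma \ref{lem0.2}(3) yields
\[
\mathrm{sup}(X\otimes^{\mathrm{L}}_A Y)=\mathrm{sup}X+\mathrm{sup}Y.
\]
Expanding $\mathrm{amp}=\mathrm{sup}-\mathrm{inf}$, the target $\mathrm{amp}(X\otimes^{\mathrm{L}}_A Y)\geq \mathrm{amp}Y$ is therefore equivalent to the upper bound
\[
\mathrm{inf}(X\otimes^{\mathrm{L}}_A Y)\leq \mathrm{sup}X+\mathrm{inf}Y,
\]
which is what I plan to prove.

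I would proceed by induction on $\mathrm{amp}Y$. The base case $\mathrm{amp}Y=0$ is clean: then $Y\simeq N[-\mathrm{inf}Y]$ with $N=\mathrm{H}^{\mathrm{inf}Y}(Y)$ a nonzero finitely generated $\mathrm{H}^{0}(A)$-module, and a second application of Lemma \ref{lem0.2}(3) to $X$ and $N$ gives $\mathrm{sup}(X\otimes^{\mathrm{L}}_A N)=\mathrm{sup}X$; since $X\otimes^{\mathrm{L}}_A N\not\simeq 0$ this forces $\mathrm{inf}(X\otimes^{\mathrm{L}}_A N)\leq\mathrm{sup}X$, which after shifting gives the desired bound. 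For the inductive step $\mathrm{amp}Y\geq 1$ I would form the truncation triangle
\[
N[-\mathrm{inf}Y]\longrightarrow Y\longrightarrow Y''\longrightarrow N[1-\mathrm{inf}Y],
\]
with $Y''=\tau^{>\mathrm{inf}Y}Y$ and $\mathrm{amp}Y''<\mathrm{amp}Y$, apply $X\otimes^{\mathrm{L}}_A-$, and invoke the inductive hypothesis $\mathrm{inf}(X\otimes^{\mathrm{L}}_A Y'')\leq \mathrm{sup}X+\mathrm{inf}Y''$. The Nakayama-type nonvanishing $\mathrm{H}^{\mathrm{sup}X}(X)\otimes_{\mathrm{H}^{0}(A)}N\not\cong 0$ over the local ring $\mathrm{H}^{0}(A)$ produces a nonzero class in $\mathrm{H}^{\mathrm{sup}X+\mathrm{inf}Y}(X\otimes^{\mathrm{L}}_A N[-\mathrm{inf}Y])$, and the goal is to trace this class through the long exact sequence into $X\otimes^{\mathrm{L}}_A Y$.

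The main obstacle is controlling the connecting morphism $\mathrm{H}^{\mathrm{sup}X+\mathrm{inf}Y-1}(X\otimes^{\mathrm{L}}_A Y'')\to\mathrm{H}^{\mathrm{sup}X+\mathrm{inf}Y}(X\otimes^{\mathrm{L}}_A N[-\mathrm{inf}Y])$, which could in principle kill the Nakayama class before it surfaces as cohomology of $X\otimes^{\mathrm{L}}_A Y$. The standard K\"unneth-type bound $\mathrm{inf}(X\otimes^{\mathrm{L}}_A Y'')\geq\mathrm{inf}X+\mathrm{inf}Y''$ (with $\mathrm{inf}Y''\geq\mathrm{inf}Y+1$) constrains these boundary contributions, and combining it with the inductive hypothesis is what I expect will force the class to survive in $X\otimes^{\mathrm{L}}_A Y$. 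An alternative route that sidesteps this delicate boundary analysis is to use the perfectness of $X$ to rewrite $X\otimes^{\mathrm{L}}_A Y\simeq\mathrm{RHom}_A(\mathrm{RHom}_A(X,A),Y)$ via Lemma \ref{lem0.15}(2) and then derive the bound from the Ext-based amplitude inequalities of Theorem \ref{lem5.4'}.
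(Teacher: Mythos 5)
Your reduction of the amplitude inequality to $\mathrm{inf}(X\otimes^{\mathrm{L}}_A Y)\leq\mathrm{sup}X+\mathrm{inf}Y$ (using Lemma \ref{lem0.2}(3) for the sup) is correct, and the base case $\mathrm{amp}Y=0$ is indeed settled by Nakayama. But the inductive step has a genuine gap, and it is not a gap that an elementary argument can close. The obstruction you flag --- the connecting map $\mathrm{H}^{\mathrm{sup}X+\mathrm{inf}Y-1}(X\otimes^{\mathrm{L}}_AY'')\to\mathrm{H}^{\mathrm{sup}X+\mathrm{inf}Y}(X\otimes^{\mathrm{L}}_AN[-\mathrm{inf}Y])$ hitting the Nakayama class --- is a real possibility whenever $\mathrm{amp}X>0$; the K\"unneth lower bound $\mathrm{inf}(X\otimes^{\mathrm{L}}_AY'')\geq\mathrm{inf}X+\mathrm{inf}Y''$ rules it out only when $\mathrm{inf}X=\mathrm{sup}X$. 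And even if the Nakayama class dies in degree $\mathrm{sup}X+\mathrm{inf}Y$, one would still have to produce nonzero cohomology of $X\otimes^{\mathrm{L}}_AY$ in some lower degree, for which the long exact sequence gives no mechanism. This is not an oversight that could be patched by a cleverer truncation: already in the classical case ($\mathrm{amp}A=0$, $X$ a perfect complex, $Y\in\mathrm{D}^{\mathrm{b}}_{\mathrm{f}}$), this amplitude inequality is Iversen's inequality, which is equivalent in strength to the New Intersection Theorem. A truncation-plus-Nakayama argument that avoids IT altogether is simply too good to be true.

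Your alternative route is the right idea and is, in spirit, what the paper does --- but it needs more than a reference to Theorem \ref{lem5.4'}. The paper first passes to $B=\mathrm{L}\Lambda(A,\bar{\mathfrak{m}})$ so as to have a normalized dualizing DG-module $R$, then uses the chain of isomorphisms $\mathrm{RHom}_A(X\otimes^{\mathrm{L}}_AY,R)\simeq\mathrm{RHom}_A(X,A)\otimes^{\mathrm{L}}_A\mathrm{RHom}_A(Y,R)$ (valid because $\mathrm{RHom}_A(X,A)$ has finite projective dimension, Lemma \ref{lem0.15}(1)), converts amplitude into the Cohen--Macaulay defect $\mathrm{lc.dim}-\mathrm{depth}$ of the $R$-dual via Corollary \ref{lem0.8}, and finally applies Corollary \ref{lem1.1} --- which is exactly where the Intersection Theorem \ref{lem5.4} enters. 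Dualizing into $A$ rather than $R$ (as you suggest) and quoting Theorem \ref{lem5.4'}(ii) controls $\mathrm{lc.dim}\,\mathrm{RHom}_A(-,-)$, not $\mathrm{inf}$ or $\mathrm{amp}$, so an extra layer of local duality is still required to land on the amplitude statement. In short: drop the inductive route, take the dualizing route, but pass to the completion, dualize into $R$, and route the IT input through Corollaries \ref{lem0.8} and \ref{lem1.1}.
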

\begin{proof} Let $B=\mathrm{L}\Lambda(A,\bar{\mathfrak{m}})$. Then $X\otimes^\mathrm{L}_AB,Y\otimes^\mathrm{L}_AB\in\mathrm{D}^{\mathrm{b}}_{\mathrm{f}}(B)$ with $\mathrm{projdim}_B(X\otimes^\mathrm{L}_AB)<\infty$ by \cite[Proposition 3.7]{BSSW} and $\mathrm{amp}(X\otimes^\mathrm{L}_AY)=\mathrm{amp}((X\otimes^\mathrm{L}_AB)\otimes^\mathrm{L}_B(Y\otimes^\mathrm{L}_AB))$, $\mathrm{amp}Y=\mathrm{amp}(Y\otimes^\mathrm{L}_AB)$. Thus we may assume that $A$ is derived $\bar{\mathfrak{m}}$-adic complete, and $A$ has a normalized dualizing DG-module $R$ by \cite[Proposition 7.21]{s18}. We have the following isomorphisms
\begin{center}$\begin{aligned}\mathrm{RHom}_A(X\otimes^\mathrm{L}_AY,R)
&\simeq\mathrm{RHom}_A(X,\mathrm{RHom}_A(Y,R))\\
&\simeq
\mathrm{RHom}_A(\mathrm{RHom}_A(\mathrm{RHom}_A(X,A),A),\mathrm{RHom}_A(Y,R))\\
&\simeq\mathrm{RHom}_A(X,A)\otimes^\mathrm{L}_A\mathrm{RHom}_A(Y,R).\end{aligned}$\end{center}
where the second one is by $X\simeq\mathrm{RHom}_A(\mathrm{RHom}_A(X,A),A)$ and the third one is by Lemma \ref{lem0.15}(1) as $\mathrm{projdim}_A\mathrm{RHom}_A(X,A)<\infty$.
So
we have the next (in)equalities
\begin{center}$\begin{aligned}\mathrm{amp}(X\otimes^\mathrm{L}_AY)
&=\mathrm{lc.dim}_A\mathrm{RHom}_A(X\otimes^\mathrm{L}_AY,R)-\mathrm{depth}_A\mathrm{RHom}_A(X\otimes^\mathrm{L}_AY,R)\\
&\geq\mathrm{lc.dim}_A\mathrm{RHom}_A(Y,R)-\mathrm{depth}_A\mathrm{RHom}_A(Y,R)\\
&=\mathrm{amp}Y,\end{aligned}$\end{center}
where the equalities are by Corollary \ref{lem0.8} and the inequality is by the above isomorphism and Corollary \ref{lem1.1}, as desired.
\end{proof}

\begin{rem}\label{lem:2.2}{\rm Let $(A,\bar{\mathfrak{m}},\bar{\kappa})$ be a local DG-ring with $\mathrm{amp}A<\infty$.

(1) If $\mathrm{amp}A>0$, then $\bar{\kappa}$ never has finite projective dimension. Thus there are DG-modules in $\mathrm{D}_{\mathrm{f}}(A)$ which are not compact.

(2) Let $\bar{\textbf{\emph{x}}}$ be a sequence in $\bar{\mathfrak{m}}$ and
$X\in\mathrm{D}^{\mathrm{b}}_{\mathrm{f}}(A)$ with $\mathrm{amp}X=\mathrm{amp}A$. If  $\mathrm{projdim}_AX<\infty$, then $\mathrm{amp}X/\hspace{-0.15cm}/\bar{\emph{\textbf{x}}}=\mathrm{amp}(A/\hspace{-0.15cm}/\bar{\emph{\textbf{x}}}\otimes^\mathrm{L}_AX)
\geq\mathrm{amp}A/\hspace{-0.15cm}/\bar{\emph{\textbf{x}}}\geq\mathrm{amp}A$ by Proposition \ref{lem5.10}, it follows by \cite[Lemma 2.13]{Mi19} that $\mathrm{amp}A/\hspace{-0.15cm}/\bar{\emph{\textbf{x}}}=\mathrm{amp}A$ and $\bar{\emph{\textbf{x}}}$ is $A$-regular. This is the DG-version of Auslander's zerodivisor conjecture.

(3) The example after \cite[Theorem 0.2]{J} shows that the condition $\mathrm{amp}A<\infty$ is crucial.}
\end{rem}

The next result is an extend of the strong amplitude inequality (see \cite[20.8]{F}).

\begin{prop}\label{lem5.11} Let $(A,\bar{\mathfrak{m}},\bar{\kappa})$ be a sequence-regular DG-ring and $X,Y\in\mathrm{D}^{\mathrm{b}}_{\mathrm{f}}(A)$ with $\mathrm{projdim}_AX<\infty$. One has an inequality \begin{center}$\mathrm{amp}(X\otimes^\mathrm{L}_AY)\geq\mathrm{amp}X+\mathrm{amp}Y+\mathrm{inf}A$.\end{center}
\end{prop}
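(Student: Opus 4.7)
The plan is to convert the amplitude inequality into a statement about $\mathrm{inf}(X\otimes_A^{\mathrm{L}}Y)$ by duality, and then to apply the strong intersection theorem (Proposition \ref{lem5.7}) on the dual side. Since $A$ is sequence-regular, it is local Cohen--Macaulay (so that $\mathrm{lc.dim}A-\mathrm{depth}A=\mathrm{amp}A=-\mathrm{inf}A$, using $\mathrm{sup}A=0$) and local Gorenstein (by \cite{sh21}); in particular $A$ is itself a dualizing DG-module over $A$, so that Corollary \ref{lem0.8} applies with $R=A$. Mimicking the chain of isomorphisms used in the proof of Proposition \ref{lem5.10}, namely adjunction combined with the reflexivity $X\simeq\mathrm{RHom}_A(\mathrm{RHom}_A(X,A),A)$ (valid because $\mathrm{projdim}_AX<\infty$, so $X$ is a perfect DG-module and $\mathrm{RHom}_A(X,A)$ again has finite projective dimension) and Lemma \ref{lem0.15}(1), I would establish the key isomorphism
\[
\mathrm{RHom}_A(X\otimes_A^{\mathrm{L}}Y,A)\simeq\mathrm{RHom}_A(X,A)\otimes_A^{\mathrm{L}}\mathrm{RHom}_A(Y,A).
\]

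Next, set $U=\mathrm{RHom}_A(X,A)$ and $V=\mathrm{RHom}_A(Y,A)$, both of which lie in $\mathrm{D}^{\mathrm{b}}_{\mathrm{f}}(A)$ (use $\mathrm{projdim}_AX<\infty$ for $U$ and that $A$ is dualizing for $V$). Applying Proposition \ref{lem5.7} to the pair $(U,V)$ gives
\[
\mathrm{lc.dim}_AU+\mathrm{lc.dim}_AV\leq\mathrm{lc.dim}A+\mathrm{lc.dim}_A(U\otimes_A^{\mathrm{L}}V).
\]
By Corollary \ref{lem0.8} (applied with the dualizing DG-module $R=A$, which need not be normalized), each of the three $\mathrm{lc.dim}$ terms equals $\mathrm{depth}A$ minus the $\mathrm{inf}$ of the respective input: $\mathrm{lc.dim}_AU=\mathrm{depth}A-\mathrm{inf}X$, $\mathrm{lc.dim}_AV=\mathrm{depth}A-\mathrm{inf}Y$, and, via the displayed isomorphism, $\mathrm{lc.dim}_A(U\otimes_A^{\mathrm{L}}V)=\mathrm{lc.dim}_A\mathrm{RHom}_A(X\otimes_A^{\mathrm{L}}Y,A)=\mathrm{depth}A-\mathrm{inf}(X\otimes_A^{\mathrm{L}}Y)$. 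Substituting and cancelling one copy of $\mathrm{depth}A$ yields
\[
\mathrm{inf}(X\otimes_A^{\mathrm{L}}Y)\leq\mathrm{inf}X+\mathrm{inf}Y+(\mathrm{lc.dim}A-\mathrm{depth}A)=\mathrm{inf}X+\mathrm{inf}Y-\mathrm{inf}A,
\]
using the Cohen--Macaulay identity in the last equality. Combining with $\mathrm{sup}(X\otimes_A^{\mathrm{L}}Y)=\mathrm{sup}X+\mathrm{sup}Y$ from Lemma \ref{lem0.2}(3) produces the desired inequality $\mathrm{amp}(X\otimes_A^{\mathrm{L}}Y)\geq\mathrm{amp}X+\mathrm{amp}Y+\mathrm{inf}A$.

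The main technical obstacle is justifying that a sequence-regular DG-ring is Gorenstein, so that $A$ plays the role of a dualizing DG-module and Corollary \ref{lem0.8} can be invoked with $R=A$; the Cohen--Macaulay identity is essentially built into sequence-regularity, since the generating $A$-regular sequence gives a clean Koszul computation of $\mathrm{R}\Gamma_{\bar{\mathfrak{m}}}(A)$. If the Gorenstein fact turns out to be subtle in the cited form, one can instead first reduce to the derived $\bar{\mathfrak{m}}$-adic complete case exactly as in the proof of Proposition \ref{lem5.10}, secure a normalized dualizing DG-module $R$ via \cite[Proposition 7.21]{s18}, identify $R\simeq A[-\mathrm{inf}A-\mathrm{lc.dim}A]$ using Gorensteinness, and repeat the numerical bookkeeping with $R$ in place of $A$ while absorbing the shift explicitly.
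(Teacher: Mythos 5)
Your proof is correct in structure and numerical bookkeeping, but it takes a genuinely different route from the paper's. The paper does \emph{not} invoke Gorensteinness of $A$: it reduces to the derived $\bar{\mathfrak{m}}$-adic complete case (where a dualizing DG-module $R$ exists abstractly via \cite[Proposition 7.21]{s18}), establishes $\mathrm{RHom}_A(X\otimes^\mathrm{L}_AY,R)\simeq\mathrm{RHom}_A(X,A)\otimes^\mathrm{L}_A\mathrm{RHom}_A(Y,R)$, applies Proposition \ref{lem5.7} and Corollary \ref{lem0.8} to extract an $\mathrm{lc.dim}-\mathrm{depth}$ bound, and then needs an extra intermediate inequality $\mathrm{amp}(X\otimes^\mathrm{L}_AR)\geq\mathrm{amp}X$ (obtained from $X\simeq\mathrm{RHom}_A(R,R\otimes^\mathrm{L}_AX)$ and Lemma \ref{lem0.2}(3)). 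Your approach short-circuits all of this by taking $R=A$ directly, which removes both the completion step and the auxiliary inequality involving $\mathrm{amp}(X\otimes^\mathrm{L}_AR)$; the resulting computation with $U=\mathrm{RHom}_A(X,A)$, $V=\mathrm{RHom}_A(Y,A)$ and the Cohen--Macaulay identity $\mathrm{lc.dim}A-\mathrm{depth}A=-\mathrm{inf}A$ is cleaner and the cancellation is transparent.

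The one point you should make fully explicit, because the paper neither states nor uses it, is the justification that a sequence-regular DG-ring is Gorenstein (so that $A$ is literally a dualizing DG-module over itself and Corollary \ref{lem0.8} applies with $R=A$). The paper cites \cite[Theorem 3.4]{sh21} only for regularity of $\mathrm{H}^0(A)$ and \cite[Corollary 4.5]{sh21} only for stability of sequence-regularity under $\mathrm{L}\Lambda$; neither of these immediately yields $\mathrm{injdim}_AA<\infty$. The correct source is the structure result in \cite{sh21} identifying (completed) sequence-regular DG-rings with Koszul complexes over complete regular local rings, which are Gorenstein as DG-rings; once that is in hand your argument closes. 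Absent that reference one would fall back to the completion reduction anyway, at which point your proof essentially converges to the paper's, so it is worth pinning down the precise Gorenstein statement in \cite{sh21}.
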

\begin{proof} Let $B=\mathrm{L}\Lambda(A,\bar{\mathfrak{m}})$. It follows by \cite[Corollary 4.5]{sh21} that $A$ is sequence-regular if and only if $B$ is sequence-regular. Also $\mathrm{projdim}_B(X\otimes^\mathrm{L}_AB)<\infty$, we may assume that $A$ is derived $\bar{\mathfrak{m}}$-adic complete and has a dualizing DG-module $R$. Then
$\mathrm{RHom}_A(X\otimes^\mathrm{L}_AY,R)\simeq\mathrm{RHom}_A(\mathrm{RHom}_A(\mathrm{RHom}_A(X,A),A),\mathrm{RHom}_A(Y,R))\simeq \mathrm{RHom}_A(X,A)\otimes^\mathrm{L}_A\mathrm{RHom}_A(Y,R)$ by Lemma \ref{lem0.15}(1) as $\mathrm{projdim}_A\mathrm{RHom}_A(X,A)<\infty$.
Note that $X\simeq\mathrm{RHom}_A(R,R\otimes_A^\mathrm{L}X)$, one has $\mathrm{inf}(R\otimes_A^\mathrm{L}X)\leq\mathrm{inf}X+\mathrm{sup}R$ and $\mathrm{sup}(R\otimes_A^\mathrm{L}X)=\mathrm{sup}X+\mathrm{sup}R$ by Lemma \ref{lem0.2}(3). ALso
$\mathrm{RHom}_A(X,A)\simeq\mathrm{RHom}_A(X\otimes^\mathrm{L}_AR,R)$, we have the next (in)equalities
\begin{center}$\begin{aligned}\mathrm{amp}(X\otimes^\mathrm{L}_AY)
&=\mathrm{lc.dim}_A\mathrm{RHom}_A(X\otimes^\mathrm{L}_AY,R)-\mathrm{depth}_A\mathrm{RHom}_A(X\otimes^\mathrm{L}_AY,R)\\
&=\mathrm{lc.dim}_A\mathrm{RHom}_A(X,A)\otimes^\mathrm{L}_A\mathrm{RHom}_A(Y,R)-
\mathrm{depth}_A\mathrm{RHom}_A(X,A)\otimes^\mathrm{L}_A\mathrm{RHom}_A(Y,R)\\
&\geq\mathrm{amp}(X\otimes^\mathrm{L}_AR)+\mathrm{amp}Y+\mathrm{inf}A\\
&\geq\mathrm{amp}X+\mathrm{amp}Y+\mathrm{inf}A,\end{aligned}$\end{center}where the first inequality is by Proposition \ref{lem5.7}, \cite[Theorem 4.8]{ya20} and Corollary \ref{lem0.8}, the second one is by the preceding proof.
\end{proof}

We denote by $A^\natural$ the graded ring obtained by forgetting the differential of the DG-ring $A$, and
$X^\natural$ denotes the graded $A^\natural$-module obtained by forgetting the differential of $X$ in $\mathrm{D}(A)$.
We now prove the DG-version of NIT and Krull's principle ideal theorem.

\begin{prop}\label{lem1.4} Let $(A,\bar{\mathfrak{m}},\bar{\kappa})$ be a local DG-ring with $\mathrm{amp}A<\infty$.

$(1)$ Let $F$ be a semi-free DG-module with $F^\natural=\coprod_{-n\leq j\leq0}A^\natural[-j]^{(\beta_j)}$, where each $\beta_j$ is finite. If $F\not\simeq0$ and $\mathrm{dim}_{\mathrm{H}^0(A)}\mathrm{H}^i(F)\leq -i$ holds for all $i$, then $\mathrm{lc.dim}A\leq n$.

$(2)$ Let $\bar{x}_1,\cdots,\bar{x}_n$ be a sequence in $\mathrm{H}^{0}(A)$ and $\bar{\mathfrak{p}}$ be minimal among the prime ideals containing $(\bar{x}_1,\cdots,\bar{x}_n)$. Then $\mathrm{lc.dim}A_{\bar{\mathfrak{p}}}\leq n$.
\end{prop}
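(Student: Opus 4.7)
The plan is to deduce both parts from the Intersection Theorem of Theorem \ref{lem5.4}. For part (1), I would apply Theorem \ref{lem5.4}(2) to $X=F$ and $Y=A$. The module $A$ lies in $\mathrm{D}^{\mathrm{b}}_{\mathrm{f}}(A)$ since $\mathrm{amp}A<\infty$, and an induction along the finite semi-free filtration of $F$ (using that each $\mathrm{H}^{i}(A)$ is finitely generated over $\mathrm{H}^{0}(A)$ and the Betti numbers $\beta_{j}$ are finite) shows $F\in\mathrm{D}^{-}_{\mathrm{f}}(A)$. Theorem \ref{lem5.4}(2) then gives $\mathrm{lc.dim}A\leq\mathrm{lc.dim}_{A}(F\otimes^{\mathrm{L}}_{A}A)+\mathrm{projdim}_{A}F=\mathrm{lc.dim}_{A}F+\mathrm{projdim}_{A}F$, so the task reduces to the two inputs $\mathrm{lc.dim}_{A}F\leq 0$ and $\mathrm{projdim}_{A}F\leq n$.

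For the first input, the formula $\mathrm{lc.dim}_{A}F=\mathrm{sup}\{\mathrm{dim}_{\mathrm{H}^{0}(A)}\mathrm{H}^{i}(F)+i\mid i\in\mathbb{Z}\}$ displayed just before Lemma \ref{lem0.4}, combined with the hypothesis $\mathrm{dim}_{\mathrm{H}^{0}(A)}\mathrm{H}^{i}(F)\leq-i$, gives the bound directly. For the second input, I would use the BSSW definition of projective dimension: since $F$ is semi-free, $\mathrm{RHom}_{A}(F,Y)=\mathrm{Hom}_{A}(F,Y)$ for any $Y\in\mathrm{D}^{\mathrm{b}}(A)$, whose degree-$i$ component is the finite product $\prod_{j=-n}^{0}Y^{i+j}\cdot\beta_{j}$. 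Replacing $Y$ by its soft truncation $\tau^{\leq\mathrm{sup}Y}Y$, for any $i>n+\mathrm{sup}Y$ we have $i+j\geq i-n>\mathrm{sup}Y$, so every relevant term vanishes and $\mathrm{H}^{i}(\mathrm{RHom}_{A}(F,Y))=0$, yielding $\mathrm{projdim}_{A}F\leq n$. Assembling the two estimates gives $\mathrm{lc.dim}A\leq 0+n=n$.

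For part (2), I would reduce to (1) via a Koszul complex construction. Choose lifts $x_{1},\ldots,x_{n}\in A^{0}$ of $\bar{x}_{1},\ldots,\bar{x}_{n}$, let $K=K(A^{0};x_{1},\ldots,x_{n})$ be the ordinary Koszul complex, and set $F=K\otimes_{A^{0}}A$; then $F_{\bar{\mathfrak{p}}}$ is a semi-free DG-module over the local DG-ring $A_{\bar{\mathfrak{p}}}$ of exactly the shape of (1), with $\beta_{j}=\binom{n}{-j}$. The Koszul cohomology $\mathrm{H}^{i}(F_{\bar{\mathfrak{p}}})$ is a finitely generated $\mathrm{H}^{0}(A_{\bar{\mathfrak{p}}})$-module annihilated by the image of $(\bar{x}_{1},\ldots,\bar{x}_{n})$; since $\bar{\mathfrak{p}}$ is minimal over this ideal, the quotient $\mathrm{H}^{0}(A_{\bar{\mathfrak{p}}})/(\bar{x}_{1},\ldots,\bar{x}_{n})\mathrm{H}^{0}(A_{\bar{\mathfrak{p}}})$ is artinian, giving $\mathrm{dim}_{\mathrm{H}^{0}(A_{\bar{\mathfrak{p}}})}\mathrm{H}^{i}(F_{\bar{\mathfrak{p}}})\leq 0\leq -i$ for every $i\leq 0$, while $\mathrm{H}^{i}(F_{\bar{\mathfrak{p}}})=0$ for $i>0$. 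Moreover $\mathrm{H}^{0}(F_{\bar{\mathfrak{p}}})\neq 0$, so $F_{\bar{\mathfrak{p}}}\not\simeq 0$, and applying (1) over $A_{\bar{\mathfrak{p}}}$ yields $\mathrm{lc.dim}A_{\bar{\mathfrak{p}}}\leq n$. The main technical point to be careful with is the projective-dimension estimate in (1), since the BSSW definition is phrased as an Ext-vanishing condition rather than the length of an explicit resolution; the soft-truncation argument outlined above is the cleanest route.
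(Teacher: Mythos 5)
Your proposal is correct and follows essentially the same route as the paper: both parts are deduced from Theorem \ref{lem5.4}(2) by exhibiting a DG-module of projective dimension at most $n$ whose local cohomology Krull dimension is $\leq 0$. In part~(1) the paper simply asserts $\mathrm{projdim}_AF\leq n$ ``by induction on $n$'' and then reads the conclusion off the IT inequality; your soft-truncation argument is a clean way to make that estimate precise against the BSSW definition, and your observation that the hypothesis on $\dim_{\mathrm{H}^0(A)}\mathrm{H}^i(F)$ translates directly into $\mathrm{lc.dim}_AF\leq 0$ via the displayed formula before Lemma \ref{lem0.4} is exactly the computation the paper has in mind. In part~(2) the paper works with $X=A_{\bar{\mathfrak{p}}}/\hspace{-0.15cm}/(\bar{x}_1/1,\ldots,\bar{x}_n/1)$, invokes \cite[Theorem 2.19]{Mi18} for $\mathrm{projdim}_{A_{\bar{\mathfrak{p}}}}X\leq n$, and applies Theorem \ref{lem5.4}(2) directly rather than routing back through part~(1); since your $F_{\bar{\mathfrak{p}}}=K(A^0;\textbf{x})\otimes_{A^0}A_{\bar{\mathfrak{p}}}$ is the same Koszul object, these are cosmetically different organizations of one argument.

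One small imprecision worth fixing: the cohomology of the Koszul complex $F_{\bar{\mathfrak{p}}}$ need not be annihilated by the ideal $(\bar{x}_1,\ldots,\bar{x}_n)$ itself---only by a power of it. What is true, and what you actually need, is that each $\mathrm{H}^i(F_{\bar{\mathfrak{p}}})$ is supported on $V\bigl((\bar{x}_1,\ldots,\bar{x}_n)\mathrm{H}^0(A)_{\bar{\mathfrak{p}}}\bigr)=\{\bar{\mathfrak{p}}\mathrm{H}^0(A)_{\bar{\mathfrak{p}}}\}$, since $\bar{\mathfrak{p}}$ is minimal over the ideal; this yields $\dim_{\mathrm{H}^0(A_{\bar{\mathfrak{p}}})}\mathrm{H}^i(F_{\bar{\mathfrak{p}}})\leq 0$ just as well. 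With that adjustment the reduction to part~(1) goes through as you describe.
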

\begin{proof} (1) By induction on $n$, one can obtain that $\mathrm{projdim}_AF\leq n$. As $\mathrm{dim}_{\mathrm{H}^0(A)}\mathrm{H}^i(F)\leq -i$ for all $i$, one has $\mathrm{lc.dim}_AF\leq0$. Hence Theorem \ref{lem5.4}(2) yields that $n\geq\mathrm{projdim}_AF\geq\mathrm{lc.dim}A-\mathrm{lc.dim}_AF\geq\mathrm{lc.dim}A$.

(2) Let $X=A_{\bar{\mathfrak{p}}}/\hspace{-0.15cm}/(\bar{x}_1/1,\cdots,\bar{x}_n/1)$. Then $\mathrm{projdim}_{A_{\bar{\mathfrak{p}}}}X\leq n$ by \cite[Theorem 2.19]{Mi18}. Also $\mathrm{lc.dim}_{A_{\bar{\mathfrak{p}}}}X=0$, so the asserted inequality follows by Theorem \ref{lem5.4}(2).
\end{proof}

Let $A$ be a local Cohen-Macaulay DG-ring and $\bar{\textbf{\emph{x}}}$ a maximal $A$-regular sequence.
Then the DG-module $X=A/\hspace{-0.15cm}/\bar{\textbf{\emph{x}}}$ has finite projective dimension, $\mathrm{H}(X)$ has finite length and $\mathrm{amp}A=\mathrm{amp}X$.  The next corollary proves that its opposite is also true.

\begin{cor}\label{lem1.3} Let $(A,\bar{\mathfrak{m}},\bar{\kappa})$ be a local DG-ring with $\mathrm{amp}A<\infty$ and $0\not\simeq X\in\mathrm{D}^{\mathrm{b}}(A)$. If $\mathrm{projdim}_AX<\infty$ and $\mathrm{H}(X)$ is finite length, then
$\mathrm{lc.dim}A\leq \mathrm{projdim}_AX+\mathrm{sup}X$.
In additional, if $\mathrm{amp}X=\mathrm{amp}A$ then
$A$ is local Cohen-Macaulay.
\end{cor}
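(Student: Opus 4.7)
The plan is to reduce both statements to the Intersection Theorem (Theorem \ref{lem5.4}(2)) with $Y=A$, once we have turned the finite-length hypothesis on $\mathrm{H}(X)$ into the numerical identities $\mathrm{lc.dim}_A X=\mathrm{sup}X$ and $\mathrm{depth}_A X=\mathrm{inf}X$.

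First I would observe that an $\mathrm{H}^0(A)$-module of finite length is finitely generated and supported only at $\bar{\mathfrak{m}}$, so $X\in\mathrm{D}^{\mathrm{b}}_{\mathrm{f}}(A)$ and $\mathrm{dim}_{\mathrm{H}^0(A)}\mathrm{H}^\ell(X)=0$ whenever $\mathrm{H}^\ell(X)\neq 0$. Using the formula for $\mathrm{lc.dim}$ stated just before Lemma \ref{lem0.4}, this immediately yields $\mathrm{lc.dim}_A X=\mathrm{sup}X$. Applying Theorem \ref{lem5.4}(2) with $Y=A$ gives
\begin{center}
$\mathrm{lc.dim}\,A=\mathrm{lc.dim}_A A\leq \mathrm{lc.dim}_A(X\otimes_A^{\mathrm{L}}A)+\mathrm{projdim}_A X=\mathrm{sup}X+\mathrm{projdim}_A X$,
\end{center}
which is the first inequality.

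For the second assertion, assume further that $\mathrm{amp}X=\mathrm{amp}A$. Since $\mathrm{H}(X)$ is supported at $\bar{\mathfrak{m}}$ we also have $X\simeq \mathrm{R}\Gamma_{\bar{\mathfrak{m}}}(X)$, so $\mathrm{depth}_A X=\mathrm{inf}\mathrm{R}\Gamma_{\bar{\mathfrak{m}}}(X)=\mathrm{inf}X$. Because $\mathrm{projdim}_A X<\infty$, the DG-version of the Auslander--Buchsbaum formula (\cite[Theorem 4.5]{ya20}) gives $\mathrm{projdim}_A X=\mathrm{depth}\,A-\mathrm{depth}_A X=\mathrm{depth}\,A-\mathrm{inf}X$. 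Substituting into the inequality already proved yields
\begin{center}
$\mathrm{lc.dim}\,A-\mathrm{depth}\,A\leq \mathrm{sup}X-\mathrm{inf}X=\mathrm{amp}X=\mathrm{amp}A$.
\end{center}

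Finally, the left-hand side equals $\mathrm{amp}\mathrm{R}\Gamma_{\bar{\mathfrak{m}}}(A)$, and by \cite[Theorem 4.1(1)]{Shaul} one always has $\mathrm{amp}\mathrm{R}\Gamma_{\bar{\mathfrak{m}}}(A)\geq \mathrm{amp}A$; the two inequalities force equality, which is exactly the definition of $A$ being local Cohen--Macaulay. The only nontrivial input is the $(\dag')$-style Intersection Theorem with $Y=A$; once one notices that finite length collapses $\mathrm{lc.dim}_A X$ and $\mathrm{depth}_A X$ to $\mathrm{sup}X$ and $\mathrm{inf}X$, the rest is a short bookkeeping argument combining Auslander--Buchsbaum with Shaul's general inequality $\mathrm{amp}\mathrm{R}\Gamma_{\bar{\mathfrak{m}}}(A)\geq\mathrm{amp}A$.
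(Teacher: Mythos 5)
Your proof is correct, and for the first inequality it follows a genuinely more direct route than the paper's. The paper derives $\mathrm{lc.dim}A\leq\mathrm{projdim}_AX+\mathrm{sup}X$ by constructing a minimal semi-free resolution $F$ of $X[\mathrm{sup}X]$ with $F^\natural=\coprod_{-n\leq j\leq 0}A^\natural[-j]^{(\beta_j)}$, $n=\mathrm{projdim}_AX+\mathrm{sup}X$, observing that $\mathrm{H}(F)\cong\mathrm{H}(X)$ has finite length so the dimension bound $\mathrm{dim}_{\mathrm{H}^0(A)}\mathrm{H}^i(F)\leq -i$ holds, and then invoking Proposition \ref{lem1.4}(1), the DG New Intersection Theorem. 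You instead note directly that finite length forces $\mathrm{dim}_{\mathrm{H}^0(A)}\mathrm{H}^\ell(X)=0$ whenever $\mathrm{H}^\ell(X)\neq 0$ and hence $\mathrm{lc.dim}_AX=\mathrm{sup}X$, and then apply Theorem \ref{lem5.4}(2) with $Y=A$. Since the proof of Proposition \ref{lem1.4}(1) itself consists of an application of Theorem \ref{lem5.4}(2), the two arguments ultimately rest on the same input, but yours skips the resolution-building detour and is shorter; the paper's arrangement presumably serves to showcase the NIT. (One small slip: the identity $\mathrm{lc.dim}_AX=\mathrm{sup}\{\mathrm{dim}_{\mathrm{H}^0(A)}\mathrm{H}^\ell(X)+\ell\}$ that you use is the definition given in the introduction, not the prime-ideal formula displayed before Lemma \ref{lem0.4}; either one gives $\mathrm{lc.dim}_AX=\mathrm{sup}X$ here, so the conclusion is unaffected.) For the second assertion your argument coincides with the paper's: both use $\mathrm{depth}_AX=\mathrm{inf}X$ coming from $X\simeq\mathrm{R}\Gamma_{\bar{\mathfrak{m}}}(X)$, the DG Auslander--Buchsbaum formula \cite[Theorem 4.5]{ya20}, and the inequality $\mathrm{amp}\mathrm{R}\Gamma_{\bar{\mathfrak{m}}}(A)\geq\mathrm{amp}A$ from \cite[Theorem 4.1(1)]{Shaul} to conclude that $A$ is local Cohen--Macaulay; you merely spell out the last step, which the paper leaves implicit.
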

\begin{proof} Note that $\mathrm{projdim}_AX[\mathrm{sup}X]=\mathrm{projdim}_AX+\mathrm{sup}X$ and $\mathrm{sup}X[\mathrm{sup}X]=0$, it follows that $X[\mathrm{sup}X]$ has a minimal semi-free resolution $F$ with $F^\natural=\coprod_{-n\leq j\leq0}A^\natural[-j]^{(\beta_j)}$, where $n=\mathrm{projdim}_AX[\mathrm{sup}X]$ and each $\beta_j$ is finite. As $\mathrm{H}(F)\cong\mathrm{H}(X)$ is finite length, it follows by Proposition \ref{lem1.4}(1) that $\mathrm{lc.dim}A\leq n=\mathrm{projdim}_AX+\mathrm{sup}X$.

If $\mathrm{amp}X=\mathrm{amp}A$, then $\mathrm{lc.dim}A\leq \mathrm{projdim}_AX+\mathrm{sup}X=\mathrm{depth}A-\mathrm{depth}_AX-\mathrm{inf}A+\mathrm{inf}X=\mathrm{depth}A-\mathrm{inf}A$, and so $A$ is local Cohen-Macaulay.
\end{proof}

 The affirmative answer to the DG-version of Bass conjecture is the special case of the following proposition, which solves affirmatively conjecture of Minamoto (see \cite[Conjecture 2.36]{Mi19}) or Shaul (see \cite[Remark 5.25]{Shaul}).

\begin{prop}\label{lem5.9} Let $(A,\bar{\mathfrak{m}},\bar{\kappa})$ be a local DG-ring with $\mathrm{amp}A<\infty$, and let $0\not\simeq X\in\mathrm{D}^{+}_\mathrm{f}(A)$ with $\mathrm{injdim}_AX<\infty$ and $0\not\simeq Y\in\mathrm{D}^{\mathrm{b}}_{\mathrm{f}}(A)$. One has an inequality, \begin{center}$\mathrm{amp}\mathrm{RHom}_A(Y,X)
\geq\mathrm{lc.dim}_AY-\mathrm{depth}_AY$.\end{center}In particular, $A$ is local Cohen-Macaulay if and only if there is such $X$ with $\mathrm{amp}X=\mathrm{amp}A$.
\end{prop}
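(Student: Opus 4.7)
The plan is to reduce to the complete case where a dualizing DG-module exists, and then convert the inequality about $\mathrm{amp}\,\mathrm{RHom}_A(Y,X)$ into an inequality of the form $\mathrm{lc.dim}_A-\mathrm{depth}_A$ on a tensor product, where Corollary \ref{lem1.1} can be applied.

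First I would replace $A$ by its derived $\bar{\mathfrak{m}}$-adic completion $B=\mathrm{L}\Lambda(A,\bar{\mathfrak{m}})$, invoking \cite[Proposition 3.7]{BSSW} and the standard fact that $\mathrm{amp}$, $\mathrm{lc.dim}$ and $\mathrm{depth}$ of complexes in $\mathrm{D}^\mathrm{b}_\mathrm{f}$ are preserved by this flat base change; one also has $\mathrm{injdim}_B(X\otimes^\mathrm{L}_A B)<\infty$. Thus I may assume $A$ is derived $\bar{\mathfrak{m}}$-adic complete, whence $A$ admits a normalized dualizing DG-module $R$ by \cite[Proposition 7.21]{s18}.

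Next, set $M:=\mathrm{RHom}_A(X,R)$. By Corollary \ref{lem0.8} (applied in the form $\mathrm{projdim}_A M=\mathrm{injdim}_A X-\mathrm{depth}_A R$) we have $\mathrm{projdim}_A M<\infty$, and Lemma \ref{lem0.11}(1) then gives $M\in\mathrm{D}^\mathrm{b}_\mathrm{f}(A)$; the biduality $X\simeq\mathrm{RHom}_A(M,R)$ holds. Using the standard tensor-Hom adjunction,
\begin{center}
$\mathrm{RHom}_A(Y,X)\simeq\mathrm{RHom}_A(Y,\mathrm{RHom}_A(M,R))\simeq\mathrm{RHom}_A(Y\otimes^\mathrm{L}_A M,R)$.
\end{center}
Since $Y\otimes^\mathrm{L}_A M\in\mathrm{D}^\mathrm{b}_\mathrm{f}(A)$, and it is nonzero because $M\neq0$ has finite projective dimension so that $\bar{\kappa}\otimes^\mathrm{L}_A M\not\simeq 0$ while $Y\neq 0$, both halves of Corollary \ref{lem0.8} apply to $Z:=Y\otimes^\mathrm{L}_A M$. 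Subtracting the two formulas (and using that $R$ is normalized, so $\mathrm{depth}_A R=0$) yields
\begin{center}
$\mathrm{amp}\,\mathrm{RHom}_A(Y,X)=\mathrm{lc.dim}_A(Y\otimes^\mathrm{L}_A M)-\mathrm{depth}_A(Y\otimes^\mathrm{L}_A M)$.
\end{center}

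Now Corollary \ref{lem1.1}, applied to the DG-module $M$ of finite flat dimension and to $Y\in\mathrm{D}^-_\mathrm{f}(A)$, gives
$\mathrm{lc.dim}_A(Y\otimes^\mathrm{L}_A M)-\mathrm{depth}_A(Y\otimes^\mathrm{L}_A M)\geq\mathrm{lc.dim}_A Y-\mathrm{depth}_A Y$, proving the asserted inequality. For the ``in particular'' part, take $Y=A$; the inequality becomes $\mathrm{amp}\,X\geq\mathrm{lc.dim}\,A-\mathrm{depth}\,A=\mathrm{amp}\,\mathrm{R}\Gamma_{\bar{\mathfrak{m}}}(A)$. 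Combined with Shaul's $\mathrm{amp}\,\mathrm{R}\Gamma_{\bar{\mathfrak{m}}}(A)\geq\mathrm{amp}\,A$ from \cite[Theorem 4.1(1)]{Shaul}, the assumption $\mathrm{amp}\,X=\mathrm{amp}\,A$ forces equality throughout, so $A$ is local Cohen-Macaulay. Conversely, when $A$ is local Cohen-Macaulay, after completion one may take $X=R$, the normalized dualizing DG-module, which has $\mathrm{injdim}_A R=0$ and $\mathrm{amp}\,R=\mathrm{amp}\,A$ by Shaul's characterization.

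The main obstacle I expect is bookkeeping around completion: ensuring that the hypotheses ($0\not\simeq$, $\mathrm{injdim}<\infty$, $\mathrm{amp}=\mathrm{amp}\,A$) and all relevant invariants descend to and ascend from $B$. A secondary subtlety is verifying the nonvanishing of $Y\otimes^\mathrm{L}_A M$ (via a Nakayama-type argument using $\mathrm{projdim}_A M<\infty$) so that Corollary \ref{lem1.1} can be invoked cleanly.
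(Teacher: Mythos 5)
Your proof of the main inequality is essentially identical in substance to the paper's: reduce to the derived $\bar{\mathfrak{m}}$-adic complete case, set $M=\mathrm{RHom}_A(X,R)$ (which by Corollary \ref{lem0.8} has finite projective dimension and by Lemma \ref{lem0.11}(1) lies in $\mathrm{D}^{\mathrm{b}}_{\mathrm{f}}(A)$), rewrite $\mathrm{RHom}_A(Y,X)\simeq\mathrm{RHom}_A(Y\otimes^{\mathrm{L}}_A M,R)$, convert amplitude of the latter into $\mathrm{lc.dim}_A-\mathrm{depth}_A$ of $Y\otimes^{\mathrm{L}}_A M$ via Corollary \ref{lem0.8}, and finish with Corollary \ref{lem1.1}. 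The nonvanishing of $Y\otimes^{\mathrm{L}}_A M$ by Nakayama is exactly Lemma \ref{lem0.2}(3), so that checks out. This is the same argument as the paper.

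There is, however, a genuine gap in the ``if'' direction of the final assertion. You write that when $A$ is local Cohen-Macaulay, ``after completion one may take $X=R$,'' but $R$ is a dualizing DG-module for $B=\mathrm{L}\Lambda(A,\bar{\mathfrak{m}})$, not for $A$. The claim being proved is that there exists $X\in\mathrm{D}^{+}_{\mathrm{f}}(A)$ of finite injective dimension with $\mathrm{amp}\,X=\mathrm{amp}\,A$, and this is a statement about $A$ itself: one cannot deduce it from the corresponding statement over $B$, since $B$ is in general not module-finite over $A$, and there is no descent principle producing an $X$ over $A$ from an $X'$ over $B$. The paper avoids this by constructing $X$ directly over $A$ via a Koszul-type gadget, $X=\mathrm{Hom}_A(A/\hspace{-0.15cm}/\bar{\textbf{x}},E(A,\bar{\mathfrak{m}}))$ for a maximal $A$-regular sequence $\bar{\textbf{x}}$; alternatively one could cite \cite[Theorem 5.22(1)]{Shaul}, which asserts exactly the existence of such a DG-module over any local Cohen-Macaulay DG-ring. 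You should replace the ``take $X=R$ after completion'' step with one of these.
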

\begin{proof} Let $B=\mathrm{L}\Lambda(A,\bar{\mathfrak{m}})$. As $\mathrm{H}^0(B)=\mathrm{H}^0(A)\otimes^\mathrm{L}_AB$ by \cite[Lemma 6.3]{S}, we have
\begin{center}$\mathrm{RHom}_B(\mathrm{H}^0(B)/\bar{\mathfrak{m}}\mathrm{H}^0(B),X\otimes^\mathrm{L}_AB)
\simeq\mathrm{RHom}_A(\mathrm{H}^0(A)/\bar{\mathfrak{m}},X)\otimes^\mathrm{L}_AB$,\end{center}
it follows by \cite[Corollary 2.31]{Mi19} that
$\mathrm{injdim}_B(X\otimes^\mathrm{L}_AB)<\infty$.
By \cite[Propositions 2.10 and 3.3 and Theorem 2.15]{Shaul}, one has $\mathrm{depth}_AY=\mathrm{depth}_B(Y\otimes^\mathrm{L}_AB)$ and $\mathrm{lc.dim}_AY=\mathrm{lc.dim}_B(Y\otimes^\mathrm{L}_AB)$. Also $\mathrm{amp}\mathrm{RHom}_A(Y,X)=\mathrm{amp}(\mathrm{RHom}_A(Y,X)\otimes^\mathrm{L}_AB)$ by \cite[Proposition 1.14]{BSSW}.
 Thus
we may assume that $A$ is derived $\bar{\mathfrak{m}}$-adic complete, and $A$ has a normalized dualizing DG-module $R$. Then $\mathrm{RHom}_A(X,R)\in\mathrm{D}^{\mathrm{b}}_{\mathrm{f}}(A)$ by \cite[Proposition 7.2]{ye16} and Lemma \ref{lem0.11}(2) and $\mathrm{projdim}_A\mathrm{RHom}_A(X,R)<\infty$ by Corollary \ref{lem0.8}. Also $\mathrm{RHom}_A(Y,X)\in\mathrm{D}^{\mathrm{b}}_{\mathrm{f}}(A)$ and $\mathrm{RHom}_A(X,R)\otimes_A^\mathrm{L}Y\simeq\mathrm{RHom}_A(\mathrm{RHom}_A(Y,X),R)$, it follows that
\begin{center}$\begin{aligned}\mathrm{lc.dim}_AY-\mathrm{depth}_AY
&\leq\mathrm{lc.dim}_A(\mathrm{RHom}_A(X,R)\otimes_A^\mathrm{L}Y)-
\mathrm{depth}_A(\mathrm{RHom}_A(X,R)\otimes_A^\mathrm{L}Y)\\
&=\mathrm{amp}\mathrm{RHom}_A(Y,X).\end{aligned}$\end{center}
where the inequality is by Corollary \ref{lem1.1} and the equality is by Corollary \ref{lem0.8}.

Let $A$ be local Cohen-Macaulay and $\bar{\textbf{\emph{x}}}$ a maximal $A$-regular sequence.
 Then the DG-module $X=\mathrm{Hom}_A(A/\hspace{-0.15cm}/\bar{\textbf{\emph{x}}},E(A,\bar{\mathfrak{m}}))$ has finite injective dimension and $\mathrm{amp}A=\mathrm{amp}X$. Conversely, one has $\mathrm{amp}A\leq\mathrm{lc.dim}A-\mathrm{depth}A\leq\mathrm{amp}X=\mathrm{amp}A$ by \cite[Theorem 4.1(1)]{Shaul}. Thus $A$ is local Cohen-Macaulay.
\end{proof}

 The next corollary is a DG-setting of Bass characterization of Cohen-Macaulay rings.

\begin{cor}\label{lem5.3} Let $(A,\bar{\mathfrak{m}},\bar{\kappa})$ be a local DG-ring with $\mathrm{amp}A<\infty$. Then the following are equivalent:

 $(1)$ $A$ is local Cohen-Macaulay;

 $(2)$ There exists $0\not\simeq Y\in \mathrm{D}^{\mathrm{b}}_{\mathrm{f}}(A)$ with $\mathrm{amp}Y=\mathrm{amp}A$ so that  $\mathrm{lc.dim}_{A}\mathrm{RHom}_A(X,Y)<\infty$ for all $X\in \mathrm{D}^{\mathrm{b}}_{\mathrm{f}}(A)$;

 $(3)$ There is $0\not\simeq Y\in \mathrm{D}^{\mathrm{b}}_{\mathrm{f}}(A)$ with $\mathrm{amp}Y=\mathrm{amp}A$ so that $\mathrm{lc.dim}_{A}\mathrm{RHom}_A(\bar{\kappa},Y)<\infty$.
\end{cor}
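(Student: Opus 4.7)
I would prove the cycle $(1)\Rightarrow(2)\Rightarrow(3)\Rightarrow(1)$. The implication $(2)\Rightarrow(3)$ is immediate: the residue field $\bar{\kappa}=\mathrm{H}^0(A)/\bar{\mathfrak{m}}$, viewed in degree zero, is cyclic and so lies in $\mathrm{D}^{\mathrm{b}}_{\mathrm{f}}(A)$, and specializing $(2)$ to $X=\bar{\kappa}$ gives $(3)$.

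For $(1)\Rightarrow(2)$ I would produce the required $Y$ directly from the Cohen-Macaulay direction of Proposition \ref{lem5.9}: since $A$ is local Cohen-Macaulay, that result supplies $0\not\simeq Y\in\mathrm{D}^{+}_{\mathrm{f}}(A)$ with $\mathrm{injdim}_A Y<\infty$ and $\mathrm{amp}Y=\mathrm{amp}A$, and Lemma \ref{lem0.11}(2) forces $Y\in\mathrm{D}^{\mathrm{b}}_{\mathrm{f}}(A)$. Feeding this $Y$ into Theorem \ref{lem5.4'}(iii) then yields
\begin{align*}
\mathrm{lc.dim}_A\mathrm{RHom}_A(X,Y)\leq\mathrm{lc.dim}A-\mathrm{inf}X+\mathrm{sup}Y<\infty
\end{align*}
for every $X\in\mathrm{D}^{\mathrm{b}}_{\mathrm{f}}(A)$, which is exactly $(2)$.

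The substantive step is $(3)\Rightarrow(1)$, which I would treat in complete parallel with the proof of Corollary \ref{lem5.2}. The key observation is that every cohomology module $\mathrm{H}^\ell\mathrm{RHom}_A(\bar{\kappa},Y)$ is annihilated by $\bar{\mathfrak{m}}$ (since $\bar{\kappa}$ is), hence is a $\bar{\kappa}$-vector space; a nonzero $\bar{\kappa}$-vector space, regarded as an $\mathrm{H}^0(A)$-module, has support $\{\bar{\mathfrak{m}}\}$ and therefore Krull dimension $0$. Substituting this into the defining formula for $\mathrm{lc.dim}_A$ collapses every $\ell$-summand to $\ell$ itself, so
\begin{align*}
\mathrm{lc.dim}_A\mathrm{RHom}_A(\bar{\kappa},Y)=\mathrm{sup}\,\mathrm{RHom}_A(\bar{\kappa},Y)=\mathrm{injdim}_A Y,
\end{align*}
where the second equality is the Bass-type identification \cite[Corollary 2.31]{Mi19} (exactly as used in the proof of Corollary \ref{lem5.2}). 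Hypothesis $(3)$ therefore forces $\mathrm{injdim}_A Y<\infty$, and combined with $\mathrm{amp}Y=\mathrm{amp}A$, the converse in Proposition \ref{lem5.9} yields that $A$ is local Cohen-Macaulay.

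The only delicate point is the collapse $\mathrm{lc.dim}=\mathrm{sup}$ for a $\bar{\kappa}$-supported complex, which is elementary once one notices the $\bar{\kappa}$-linearity of $\mathrm{Ext}$; everything else is purely organisational, relying on Proposition \ref{lem5.9} and Theorem \ref{lem5.4'}(iii) as black boxes.
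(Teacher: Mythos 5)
Your proposal is correct and follows essentially the same route as the paper: the cycle $(1)\Rightarrow(2)\Rightarrow(3)\Rightarrow(1)$, with Theorem \ref{lem5.4'}(iii) carrying $(1)\Rightarrow(2)$, the observation $\mathrm{lc.dim}_A\mathrm{RHom}_A(\bar{\kappa},Y)=\mathrm{sup}\,\mathrm{RHom}_A(\bar{\kappa},Y)$ together with \cite[Corollary 2.31]{Mi19} detecting $\mathrm{injdim}_AY<\infty$, and Proposition \ref{lem5.9} closing the loop. The only cosmetic divergence is in $(1)\Rightarrow(2)$, where the paper cites \cite[Theorem 5.22(1)]{Shaul} for the existence of $Y$ whereas you extract it from the ``in particular'' part of Proposition \ref{lem5.9} (together with Lemma \ref{lem0.11}(2) to pass to $\mathrm{D}^{\mathrm{b}}_{\mathrm{f}}(A)$) --- both produce the same Matlis-dualized Koszul quotient, so this is merely an internal versus external citation.
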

\begin{proof} (1) $\Rightarrow$ (2) By \cite[Theorem 5.22(1)]{Shaul}, there exists $0\not\simeq Y\in \mathrm{D}^{\mathrm{b}}_{\mathrm{f}}(A)$ such that $\mathrm{amp}Y=\mathrm{amp}A$ and $\mathrm{injdim}_AY<\infty$. Hence
 Theorem \ref{lem5.4'}(iii) yields the statement (2).

  (2) $\Rightarrow$ (3) is trivial.

(3) $\Rightarrow$ (1) Since $\mathrm{sup}\mathrm{RHom}_A(\bar{\kappa},Y)=\mathrm{lc.dim}_{A}\mathrm{RHom}_A(\bar{\kappa},Y)<\infty$, $\mathrm{injdim}_AY<\infty$ by \cite[Corollary 2.31]{Mi19}.
Hence Proposition \ref{lem5.9} implies that $A$ is local Cohen-Macaulay.
\end{proof}

  Let $(A,\bar{\mathfrak{m}},\bar{\kappa})$ be a local DG-ring. For $X\in\mathrm{D}(A)$, set
\begin{center} $\mu^n_A(X)=\mathrm{rank}_{\bar{\kappa}}\mathrm{H}^n(\mathrm{RHom}_A(\bar{\kappa},X))$.\end{center}
Vasconcelos \cite{V} conjectured the following result for a local ring.

\begin{prop}\label{lem7.7} Let $(A,\bar{\mathfrak{m}},\bar{\kappa})$ be a local DG-ring with $\mathrm{amp}A<\infty$ and set $d=\mathrm{dim}\mathrm{H}^0(A)$. If $\mu^{d+\mathrm{inf}A}_{A}(A)=1$, then $A$ is local Gorenstein.
\end{prop}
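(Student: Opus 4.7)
The plan is to reduce to the derived-complete case, translate the Bass-number hypothesis into a statement about a dualizing DG-module via the local duality of Theorem~\ref{lem0.7}, and then close with the NIT machinery of Section~3.

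First, I would pass to the derived $\bar{\mathfrak{m}}$-adic completion $B=\mathrm{L}\Lambda(A,\bar{\mathfrak{m}})$. Exactly as in the opening reduction of the proof of Proposition~\ref{lem5.9}, the data $d=\mathrm{dim}\,\mathrm{H}^0(A)$, $\mathrm{inf}\,A$, the Bass numbers $\mu^n_A(A)$, and the property of being local Gorenstein are all preserved by this completion. We may therefore assume $A$ admits a normalized dualizing DG-module $R$, which by Corollary~\ref{lem0.8} satisfies $\mathrm{inf}\,R=-d$, $\mathrm{sup}\,R=-\mathrm{depth}\,A$, and $\mathrm{depth}_A R=0=\mathrm{injdim}_A R$; the argument inside that corollary further shows $\mathrm{RHom}_A(\bar{\kappa},R)\simeq\bar{\kappa}$ concentrated in cohomological degree~$0$, the crucial point being that $\mathrm{End}_{\bar{\kappa}}(\mathrm{RHom}_A(\bar{\kappa},R))\simeq\bar{\kappa}$ forces rank one.

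Next I would establish a Bass--Betti duality for $R$. Combining the dualizing identity $A\simeq\mathrm{RHom}_A(R,R)$ with tensor--Hom adjunction and the change-of-rings isomorphism $\mathrm{RHom}_A(X,R)\simeq\mathrm{RHom}_{\bar{\kappa}}(X,\mathrm{RHom}_A(\bar{\kappa},R))$, valid whenever the $A$-action on $X$ factors through $\bar{\kappa}$, one obtains
\begin{center}
$\mathrm{RHom}_A(\bar{\kappa},A)\simeq\mathrm{RHom}_A(\bar{\kappa}\otimes^{\mathrm{L}}_A R,R)\simeq\mathrm{RHom}_{\bar{\kappa}}(\bar{\kappa}\otimes^{\mathrm{L}}_A R,\bar{\kappa}).$
\end{center}
Comparing $\bar{\kappa}$-dimensions of cohomology yields $\mu^n_A(A)=\beta^{-n}_A(R)$, where $\beta^{i}_A(R):=\mathrm{dim}_{\bar{\kappa}}\mathrm{H}^{i}(\bar{\kappa}\otimes^{\mathrm{L}}_A R)$ is the $i$-th Betti number of $R$. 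The hypothesis therefore rewrites as $\beta^{-(d+\mathrm{inf}\,A)}_A(R)=1$.

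Finally, I would exploit this single Betti-number condition to force $R\simeq A[d+\mathrm{inf}\,A]$. Lifting a generator of the one-dimensional $\bar{\kappa}$-space $\mathrm{H}^{-(d+\mathrm{inf}\,A)}(\bar{\kappa}\otimes^{\mathrm{L}}_A R)$ through a minimal semi-free model of $R$ produces a morphism $\varphi\colon A[d+\mathrm{inf}\,A]\to R$ in $\mathrm{D}(A)$ whose reduction modulo $\bar{\mathfrak{m}}$ is non-trivial. The aim is to show that $C:=\mathrm{Cone}(\varphi)\simeq 0$ by combining the amplitude inequality (Proposition~\ref{lem5.10}), the DG Intersection Theorem (Theorem~\ref{lem5.4}), and the Cohen--Macaulay defect inequality (Corollary~\ref{lem1.1}) to squeeze $\mathrm{lc.dim}_A C$ and $\mathrm{depth}_A C$ against one another. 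Once $R\simeq A[d+\mathrm{inf}\,A]$, Corollary~\ref{lem0.8} gives $\mathrm{injdim}_A A=d+\mathrm{inf}\,A<\infty$, so $A$ is local Gorenstein; equivalently $\mathrm{lc.dim}_A\mathrm{RHom}_A(\bar{\kappa},A)<\infty$, and Corollary~\ref{lem5.2} applies.

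The main obstacle is precisely this last step: extracting a full structural identification $R\simeq A[d+\mathrm{inf}\,A]$ from a single normalized Betti number. This is the DG incarnation of Roberts' NIT-based proof of the classical Vasconcelos conjecture, and the delicate input is controlling simultaneously the local cohomological dimension and the depth of the cone of $\varphi$---exactly what the tandem of Theorem~\ref{lem5.4} and Corollary~\ref{lem1.1} is designed to enable.
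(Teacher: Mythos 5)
Your approach diverges substantially from the paper's, and unfortunately the divergence is exactly where the difficulty lies. The paper proves Proposition~\ref{lem7.7} by induction on $d=\dim\mathrm{H}^0(A)$. In the base case $d=0$, the hypothesis $\mu^{\inf A}_A(A)\ne 0$ forces $\depth A=\inf A$ (since always $\depth A\geq \inf A$), hence $A$ is local Cohen-Macaulay, and a cited result then gives $\injdim_AA=\sup\mathrm{RHom}_A(\bar{\kappa},A)=\inf A<\infty$. In the inductive step, one picks an $A$-regular element $\bar{x}$ with $\dim\mathrm{H}^0(A)/(\bar{x})=d-1$, uses the standard isomorphism $\mathrm{RHom}_{A/\hspace{-0.1cm}/\bar{x}}(\bar{\kappa},A/\hspace{-0.15cm}/\bar{x})[-1]\simeq\mathrm{RHom}_A(\bar{\kappa},A)$ to transfer the Bass-number hypothesis to $A/\hspace{-0.15cm}/\bar{x}$, concludes by induction that $A/\hspace{-0.15cm}/\bar{x}$ is local Gorenstein, and then descends via Corollary~\ref{lem5.2}. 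The Intersection Theorem enters only indirectly, through Corollary~\ref{lem5.2}.

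Your plan instead reduces to the complete case, passes to a normalized dualizing DG-module $R$ via local duality, and aims to show $R\simeq A[d+\inf A]$ outright. The first two steps are sound and your identity $\mu^n_A(A)=\beta^{-n}_A(R)$ is a correct Bass--Betti duality. But the decisive step --- showing that $\beta^{-(d+\inf A)}_A(R)=1$ forces $\mathrm{Cone}(\varphi)\simeq 0$ --- is left as a hope, as you yourself acknowledge. The gap is concrete: the machinery you propose to invoke (Theorem~\ref{lem5.4}, Corollary~\ref{lem1.1}, Proposition~\ref{lem5.10}) uniformly requires finite flat or projective dimension, whereas $R$ is only known to have finite \emph{injective} dimension; a priori $R$ has an infinite minimal semi-free resolution, so $\mathrm{Cone}(\varphi)$ lies outside the reach of those tools. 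Moreover, knowing a single normalized Betti number $\beta^{-(d+\inf A)}_A(R)=1$ does not by itself constrain the Betti numbers of $R$ in other degrees, so there is no immediate obstruction to the cone being nonzero. The paper sidesteps this entirely by never attempting to identify $R$; the induction on dimension reduces everything to the $d=0$ case where the Bass number lives at the bottom degree $\inf A$ and Cohen--Macaulayness follows at once. If you wish to salvage your route, you would need a genuinely new argument at the crucial step (for instance, first establishing $\mathrm{projdim}_AR<\infty$ from the hypothesis, which is essentially equivalent to the conclusion), and as written the proposal does not supply one.
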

\begin{proof} We use induction on $d$. If $d=0$, then $\mathrm{inf}A=\mathrm{depth}A$ by assumption and \cite[Propositions 3.3]{Shaul}
 and so $A$ is local Cohen-Macaulay. Consequently, $\mathrm{injdim}_AA=\mathrm{supRHom}_A(\bar{\kappa},A)=\mathrm{inf}A$ by \cite[Propositions 4.7]{YL} and $A$ is local Gorenstein. Now assume that $d>0$. By \cite[Theorem 5.18]{Shaul}, there is an $A$-regular element $\bar{x}\in\bar{\mathfrak{m}}$ such that $\mathrm{lc.dim}A/\hspace{-0.15cm}/\bar{x}=\mathrm{dim}\mathrm{H}^0(A)/(\bar{x})=d-1$. Also by \cite[Lemma 2.9]{Mi19}, one has an isomorphism \begin{center}$\mathrm{RHom}_{A/\hspace{-0.1cm}/\bar{x}}(\bar{\kappa},A/\hspace{-0.15cm}/\bar{x})[-1]\simeq\mathrm{RHom}_{A}(\bar{\kappa},A)$\end{center}
 which implies that $\mu^{d-1+\mathrm{inf}A/\hspace{-0.1cm}/\bar{x}}_{A/\hspace{-0.1cm}/\bar{x}}(A/\hspace{-0.15cm}/\bar{x})=1$ as $\mathrm{inf}A/\hspace{-0.15cm}/\bar{x}=\mathrm{inf}A$. By induction, $A/\hspace{-0.15cm}/\bar{x}$ is local Gorenstein, and so $\mathrm{lc.dim}_{A}\mathrm{RHom}_{A/\hspace{-0.1cm}/\bar{x}}(\bar{\kappa},A/\hspace{-0.15cm}/\bar{x})
 =\mathrm{lc.dim}_{A/\hspace{-0.1cm}/\bar{x}}\mathrm{RHom}_{A/\hspace{-0.1cm}/\bar{x}}(\bar{\kappa},A/\hspace{-0.15cm}/\bar{x})<\infty$ by Corollary \ref{lem5.2}. Thus
 $\mathrm{lc.dim}_{A}\mathrm{RHom}_A(\bar{\kappa},A)<\infty$
and $A$ is local Gorenstein.
\end{proof}

\begin{cor}\label{lem8.7} Let $(A,\bar{\mathfrak{m}},\bar{\kappa})$ be a local DG-ring with $\mathrm{amp}A<\infty$ and set $d=\mathrm{dim}\mathrm{H}^0(A)$ and $E=E(A,\bar{\mathfrak{m}})$. Then the following are equivalent:

$(1)$ $A$ is local Gorenstein;

$(2)$ $\mu^{d+\mathrm{inf}A}_{A}(A)=1$;

$(3)$ $\mathrm{RHom}_A(E,\bar{\kappa})\simeq\bar{\kappa}[-d-\mathrm{inf}A]$;

$(4)$ $\bar{\kappa}\otimes_A^\mathrm{L}E\simeq\bar{\kappa}[d+\mathrm{inf}A]$.
\end{cor}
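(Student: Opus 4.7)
The plan is to close the loop via $(2)\Leftrightarrow(1)$, $(1)\Leftrightarrow(4)$ and $(3)\Leftrightarrow(4)$. As a preliminary reduction, I check that each of the four conditions is invariant under replacing $A$ by its derived $\bar{\mathfrak{m}}$-adic completion $B=\mathrm{L}\Lambda(A,\bar{\mathfrak{m}})$: condition $(1)$ is handled as in Proposition \ref{lem5.9}; condition $(2)$ follows from \cite[Proposition 1.14]{BSSW} together with the invariance of $\mathrm{RHom}_A(\bar{\kappa},A)$ under $-\otimes_A^{\mathrm{L}}B$; conditions $(3)$ and $(4)$ hold because $\bar{\kappa}$ and $E=E(A,\bar{\mathfrak{m}})$ are $\bar{\mathfrak{m}}$-torsion and so identify canonically with their counterparts over $B$. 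Hence I may assume $A$ is derived complete, in which case there exists a normalized dualizing DG-module $R$ with $\mathrm{inf}R=-d$ and $\mathrm{depth}_AR=\mathrm{injdim}_AR=0$, and $A$ is Gorenstein precisely when $R\simeq A[\mathrm{inf}A+d]$.

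For $(1)\Leftrightarrow(4)$, I use $E\simeq\mathrm{R}\Gamma_{\bar{\mathfrak{m}}}(R)$ together with the projection-type identity $\bar{\kappa}\otimes_{A}^{\mathrm{L}}\mathrm{R}\Gamma_{\bar{\mathfrak{m}}}(R)\simeq\mathrm{R}\Gamma_{\bar{\mathfrak{m}}}(\bar{\kappa}\otimes_{A}^{\mathrm{L}}R)$ and the fact that $\bar{\kappa}\otimes_{A}^{\mathrm{L}}R$ has $\bar{\mathfrak{m}}$-torsion cohomology, to rewrite condition $(4)$ as $\bar{\kappa}\otimes_{A}^{\mathrm{L}}R\simeq\bar{\kappa}[d+\mathrm{inf}A]$. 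The direction $(1)\Rightarrow(4)$ is then immediate substitution. For $(4)\Rightarrow(1)$, \cite[Corollary 2.3]{Mi19} gives $\mathrm{projdim}_AR=-\mathrm{inf}(\bar{\kappa}\otimes_{A}^{\mathrm{L}}R)=d+\mathrm{inf}A<\infty$, and Corollary \ref{lem0.8} applied to the isomorphism $\mathrm{RHom}_A(R,R)\simeq A$ yields $\mathrm{injdim}_AA=\mathrm{projdim}_AR+\mathrm{depth}_AR<\infty$, so $A$ is Gorenstein. The equivalence $(3)\Leftrightarrow(4)$ follows from the tensor-hom adjunction $\mathrm{RHom}_A(E,\bar{\kappa})\simeq\mathrm{RHom}_{\bar{\kappa}}(\bar{\kappa}\otimes_{A}^{\mathrm{L}}E,\bar{\kappa})$: since $\bar{\kappa}$ is a field, this is a degreewise $\bar{\kappa}$-duality, involutive on complexes of $\bar{\kappa}$-vector spaces, which matches the two stated isomorphisms.

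For $(1)\Leftrightarrow(2)$, the implication $(2)\Rightarrow(1)$ is Proposition \ref{lem7.7}. For $(1)\Rightarrow(2)$, I apply Theorem \ref{lem0.7}(2) with $Y=\bar{\kappa}$: as $\bar{\kappa}$ is $\bar{\mathfrak{m}}$-torsion, $\mathrm{L}\Lambda^{\bar{\mathfrak{m}}}(\bar{\kappa})\simeq\bar{\kappa}$, and the Matlis property $\mathrm{RHom}_A(\bar{\kappa},E)\simeq\bar{\kappa}$ then produces $\bar{\kappa}\simeq\mathrm{RHom}_A(\bar{\kappa},R)$. Under $(1)$, $R\simeq A[\mathrm{inf}A+d]$, so $\mathrm{RHom}_A(\bar{\kappa},A)\simeq\bar{\kappa}[-(d+\mathrm{inf}A)]$, giving $\mu^{d+\mathrm{inf}A}_{A}(A)=1$. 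The main obstacle I anticipate is the rank-one determination in $(1)\Rightarrow(2)$, which hinges on the Matlis identity $\mathrm{RHom}_A(\bar{\kappa},E)\simeq\bar{\kappa}$ for the DG injective hull of \cite{s18}; once this input is in place, local duality unwinds the rest, and the remaining equivalences are essentially formal consequences of the dualizing identification $R\simeq A[\mathrm{inf}A+d]$.
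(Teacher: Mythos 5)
Your proof is correct, but it takes a genuinely different path from the paper's. The paper's proof is a three-line argument that leans entirely on \cite[Theorem~3.9]{Mi19}---Minamoto's characterization of Gorenstein DG-rings via $\mathrm{RHom}_A(\bar{\kappa},A)$---combined with Proposition~\ref{lem7.7} for $(2)\Rightarrow(1)$, with $(1)\Leftrightarrow(4)$ following from the Matlis-duality isomorphism $\bar{\kappa}\otimes_A^{\mathrm{L}}E\simeq\mathrm{RHom}_A(\mathrm{RHom}_A(\bar{\kappa},A),E)$ and $(3)\Leftrightarrow(4)$ from $\mathrm{RHom}_A(E,\bar{\kappa})\simeq\mathrm{RHom}_A(\bar{\kappa}\otimes_A^{\mathrm{L}}E,E)$. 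You instead reduce to the derived complete case, work with the normalized dualizing DG-module $R$ and the identification $E\simeq\mathrm{R}\Gamma_{\bar{\mathfrak{m}}}(R)$, rewrite $(4)$ as $\bar{\kappa}\otimes_A^{\mathrm{L}}R\simeq\bar{\kappa}[d+\mathrm{inf}A]$, and use Corollary~\ref{lem0.8} ($\mathrm{injdim}_AA=\mathrm{projdim}_AR+\mathrm{depth}_AR$) for $(4)\Rightarrow(1)$, and Theorem~\ref{lem0.7}(2) for $(1)\Rightarrow(2)$; you also replace the paper's Matlis isomorphism by the base-change adjunction $\mathrm{RHom}_A(E,\bar{\kappa})\simeq\mathrm{RHom}_{\bar{\kappa}}(\bar{\kappa}\otimes_A^{\mathrm{L}}E,\bar{\kappa})$. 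This is more self-contained (it does not import the structure theorem for $\mathrm{RHom}_A(\bar{\kappa},A)$ from \cite{Mi19}) at the cost of the completion reduction and the MGM-type facts $\mathrm{L}\Lambda_{\bar{\mathfrak{m}}}(\bar{\kappa})\simeq\bar{\kappa}$ and the invariance of $E(A,\bar{\mathfrak{m}})$ under passage to $B$, which you state but should either cite precisely or spell out. One small imprecision: in $(3)\Leftrightarrow(4)$, $\bar{\kappa}$-linear duality is \emph{not} involutive on arbitrary complexes of $\bar{\kappa}$-vector spaces, only on those with degreewise finite-dimensional cohomology (and $\bar{\kappa}\otimes_A^{\mathrm{L}}E$ need not a priori have that); however the argument survives because $\dim_{\bar{\kappa}}\mathrm{Hom}_{\bar{\kappa}}(V,\bar{\kappa})=1$ forces $\dim_{\bar{\kappa}}V=1$ and $\mathrm{Hom}_{\bar{\kappa}}(V,\bar{\kappa})=0$ forces $V=0$, so the relevant direction $(3)\Rightarrow(4)$ still follows directly rather than from double duality.
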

\begin{proof} (1) $\Leftrightarrow$ (2) follows from Proposition \ref{lem7.7} and \cite[Theorem 3.9]{Mi19}.

(1) $\Leftrightarrow$ (4) By \cite[Theorem 3.9]{Mi19} and the isomorphism $\bar{\kappa}\otimes_A^\mathrm{L}E\simeq\mathrm{RHom}_A(\mathrm{RHom}_A(\bar{\kappa},A),E)$.

(3) $\Leftrightarrow$ (4) follows from the isomorphism $\mathrm{RHom}_A(E,\bar{\kappa})\simeq\mathrm{RHom}_A(\bar{\kappa}\otimes_A^\mathrm{L}E,E)$.
\end{proof}

\renewcommand\refname{\bf References}


\begin{thebibliography}{99}
\bibitem{AIN}L.L. Avramov, S.B. Iyengar, A. Neeman, Big Cohen-Macaulay modules, morphisms of perfect complexes, and intersection theorems in local algebra, \emph{Doc. Math.} \textbf{23} (2018) 1601--1619.
\bibitem{B}H. Bass , On the ubiquity of Gorenstein rings, \emph{Math. Z.} \textbf{82} ( 1963) 8--28.
\bibitem{BSSW}I. Bird, L. Shaul, P. Sridhar and J. Williamson, Finitistic dimension over commutative DG-rings, ArXiv:2204.06865v2.
\bibitem{CF}L.W. Christensen, H.-B. Foxby, \emph{Hyperhomological Algebra with Applications to Commutative Rings}, 2006.12.
\bibitem{DY1}M.T. Dibaei and S. Yassemi, Generalized local cohomology and the intersection theorem, \emph{Comm. Algebra} \textbf{33} (2005) 899--908.
\bibitem{F}H.-B. Foxby, \emph{Hyperhomological algebra and commutative algebra}, Notes in preparation.
\bibitem{FI} H.-B. Foxby, S.B. Iyengar, Depth and amplitude for unbounded complexes. \emph{Contemp. Math.} 331 (2003) 119--137.
\bibitem{H}M. Hochster, \emph{Topics in the homological theory of modules over commutative rings}, CBMS Regional Conf. Ser. Math., vol. 24, Amer. Math. Soc. 1975.
\bibitem{I}S.B. Iyengar, Depth for complexes, and intersection theorems, \emph{Math. Z.} \textbf{230} (1999) 545--567.
\bibitem{IMSW}S.B. Iyengar, L.Q. Ma, K. Schwede, M.E. Walker, Maximal Cohen-Macaulay complexes and their uses: a partial survey. In: Commutative algebra, pp. 475-500. Springer, 2021.
\bibitem{J}P. J${\o}$rgensen, Amplitude inequalities for differential graded modules, \emph{Forum Math.} \textbf{22} (2010) 941--948.
\bibitem{Mi19} H. Minamoto, Homological identities and dualizing complexes of commutative differential graded algebras, \emph{Israel J. Math.} \textbf{242} (2021) 1--36.
\bibitem{Mi18} H. Minamoto, Resolutions and homological dimensions of DG-modules
commutative DG-algebras, \emph{Israel J. Math.} \textbf{245} (2021) 409--454.
\bibitem{PS}C. Peskine, L. Szpiro, Dimension projective finie et cohomologie locale, \emph{Inst. Hautes
$\acute{E}$tudes Sci. Publ. Math.} \textbf{42} (1973) 47--119.
\bibitem{PW}G. Piepmeyer, M.E. Walker, A new proof of the New Intersection Theorem, \emph{J. Algebra} \textbf{322} (2009) 3366--3372.
\bibitem{R}P. Roberts, Two applications of dualizing complexes over local rings, \emph{Ann. Sci. $\acute{E}$cole Norm. Sup.} \textbf{9} (1976) 103--106.
\bibitem{SY}T. Sharif, S. Yassemi, Special homological dimensions and Intersection Theorem, \emph{Math. Scand.} \textbf{96} (2005) 161--168.
\bibitem{S}L. Shaul, The twisted inverse image pseudofunctor over commutative DG rings and perfect base change, \emph{Adv. Math.} \textbf{320} (2017) 279--328.
\bibitem{s18}L. Shaul, Injective DG-modules over non-positive DG-rings, \emph{J. Algebra} \textbf{515} (2018) 102--156.
\bibitem{s19}L. Shaul, Completion and torsion over commutative DG rings, \emph{Israel J. Math.} \textbf{232} (2019) 531--588.
\bibitem{Shaul} L. Shaul, The Cohen-Macaulay property in derived commatative algebra, \emph{Trans. Amer. Math. Soc.} \textbf{373} (2020) 6095--6138.
\bibitem{sh20} L. Shaul, Koszul complexes over Cohen-Macaulay rings, \emph{Adv. Math.} \textbf{386}, 107806 (2021).
\bibitem{sh21} L. Shaul, Sequence-regular commutative DG-rings. J. Algebra \textbf{647} (1994) 400--435.
\bibitem{V}W.V. Vasconcelos, \emph{Divisor theory in module categories}, North-Holl and Publishing Co., Amsterdam, 1974.
\bibitem{YL} X.Y. Yang, Y.J. Li, Local Cohen-Macaulay DG-Modules \emph{Appl. Categor. Struct.} 31 (8) (2023) 1--13.
\bibitem{ya20} X.Y. Yang, L. Wang, Homological invariants over non-positive DG-rings, \emph{J. Algebra Appl.} 2050153 (2020) (18 pages).
\bibitem{Ya}S. Yassemi, Generalized section functors, \emph{J. Pure Appl. Algebra} \textbf{95} (1994) 103--119.
\bibitem{ye16}A. Yekutieli, Duality and tilting for commutative DG rings, arXiv:1312.6411v4 (2016).
\end{thebibliography}
\end{document}